\providecommand{\U}[1]{\protect \rule{.1in}{.1in}}
\newtheorem{theorem}{Theorem}[section]
\newtheorem{example}[theorem]{Example}
\newtheorem{lemma}[theorem]{Lemma}
\newtheorem{remark}[theorem]{Remark}
\newtheorem{sch}[theorem]{Scheme}
\newenvironment{proof}[1][Proof]{\noindent \textbf{#1.} }{\  \rule{0.5em}{0.5em}}
\numberwithin{equation}{section}
\begin{document}

\title{An Effective Discrete Recursive Method for Stochastic Optimal Control Problems}
\author{Mingshang Hu \thanks{Zhongtai Securities Institute for Financial Studies,
Shandong University, Jinan, Shandong 250100, PR China. humingshang@sdu.edu.cn.
Research supported by National Key R\&D Program of China (No. 2018YFA0703900)
and NSF (No. 11671231). }
\and Lianzi Jiang\thanks{Zhongtai Securities Institute for Financial Studies,
Shandong University, Jinan, Shandong 250100, PR China. jianglianzi95@163.com.}}
\date{}
\maketitle

\textbf{Abstract}. In this paper, we study the numerical method for stochastic
optimal control problems (SOCPs). By reducing the optimal control problem to
the discrete case, we derive a discrete stochastic maximum principle (SMP).
With the help of this SMP, we propose an effective discrete recursive method
for SOCPs with feedback control. We rigorously analyze errors of the proposed
method and prove that the cost obtained by our method is of first-order
convergence. Numerical experiments are carried out to support our theoretical results.

\textbf{Keywords}. stochastic optimal control, backward stochastic
differential equations, maximum principle, recursive method, feedback control

\textbf{AMS subject classifications}. 60H35, 65C20, 93E20

\section{Introduction}

Let $(\Omega,\mathcal{F},\{ \mathcal{F}_{t}\}_{0\leq t\leq T},P)$ be a
complete filtered probability space, on which a $d$-dimensional standard
Brownian motion $W_{t}=\left(  W_{t}^{1},\ldots,W_{t}^{d}\right)  ^{\top}$ is
given. Consider the following stochastic control system:%
\begin{equation}
\left \{
\begin{array}
[c]{l}%
dX_{t}=b\left(  t,X_{t},u_{t}\right)  dt+\sigma \left(  t,X_{t},u_{t}\right)
dW_{t},\\
X_{0}=x_{0}\in \mathbb{R}^{n},
\end{array}
\right.  \label{SDE}%
\end{equation}
with a cost functional%
\begin{equation}
J\left(  u\right)  =\mathbb{E}\left[  \int_{0}^{T}f\left(  t,X_{t}%
,u_{t}\right)  dt+h\left(  X_{T}\right)  \right]  . \label{2}%
\end{equation}
Here, $u_{\cdot}$ is the control variable valued in a convex subset
$U\subset \mathbb{R}^{m}$, $X_{\cdot}$ is the state process, and $b:[0,T]\times
\mathbb{R}^{n}\times U\rightarrow \mathbb{R}^{n},\sigma:[0,T]\times
\mathbb{R}^{n}\times U\rightarrow \mathbb{R}^{n\times d}$, $f:$ $[0,T]\times
\mathbb{R}^{n}\times U\rightarrow \mathbb{R}$ and $h:\mathbb{R}^{n}%
\rightarrow \mathbb{R}$ are given functions.

An admissible control $u_{\cdot}$ is an $\{ \mathcal{F}_{t}\}_{0\leq t\leq T}%
$-adapted process with values in $U$ such that
\[
\mathbb{E}\left[  \int_{0}^{T}|u_{t}|^{2}dt\right]  <\infty.
\]
The set of admissible controls is denoted by $\mathcal{U}[0,T]$. Our
stochastic optimal control problem (SOCP) is to find a control $u_{\cdot
}^{\ast}\in \mathcal{U}[0,T]$ such that
\begin{equation}
J\left(  u^{\ast}\right)  =\min_{u_{\cdot}\in \mathcal{U}[0,T]}J\left(
u\right)  . \label{3}%
\end{equation}
The process $u_{\cdot}^{\ast}\ $is called an optimal control. The state
process $X_{\cdot}^{\ast}$ corresponding to $u_{\cdot}^{\ast}$ is called an
optimal state process, and $\left(  X_{\cdot}^{\ast},u_{\cdot}^{\ast}\right)
$ is called an optimal pair.

In practice, the control usually depends on the historical information of\ the
state process.$\ $For example, in option pricing and portfolio optimization,
people make current decisions based on the historical stock price information.
It is worth noting that the most important control of this type is feedback
control, that is, the control is given by the current state. More precisely,
there exists a function $\phi$ such that $u_{t}=\phi \left(  t,X_{t}\right)  $
(see \cite{YZh1999}). By observing the current state information, feedback
control can be easily operated.
%which makes it widely used in SOCPs.
For this reason, we assume that the optimal control is a feedback control in
this paper.

However, the SOCP does not directly yield an explicit solution, and thus
efficient numerical methods have been widely studied in recent years. Most of
the existing numerical algorithms are based on the dynamic programming
principle (DPP) and the associated Hamilton-Jacobi-Bellman (HJB) equations
(see, e.g.,
\cite{BJ2005,HFL2012,Kush2013,KD2001,RF2016,SSul2014,WF2008,WR2011}). While
stochastic maximum principle (SMP) is a popular tool for theoretical studies
of stochastic optimal control (see, e.g.,
\cite{MSXZh2013,OS2009,P1990,PBGM1962} and the references therein), it has not
been widely used in numerical algorithms
\cite{DSL2013,FZhZh2020,GLTZhZh2017,LSHP2012}. Let us mention some recent
works \cite{LSHP2012,DSL2013}, which proposed numerical algorithms for SOCPs
based on the SMP, and their discussions are limited to the case where the
control $u_{t}$ is a deterministic function of $t$. Furthermore, by
introducing the Euler method to solve the adjoint equation, Gong et al.
\cite{GLTZhZh2017} proposed a gradient projection algorithm for SOCPs and
first obtained the rate of convergence for the deterministic control case.
Recently, in \cite{FZhZh2020}, the authors propose a numerical algorithm for
SOCPs with feedback control by means of forward backward stochastic
differential equations (FBSDEs), but the convergence is not proved theoretically.

Our main results are the following. We first reduce the optimal control
problem to the discrete case and obtain a discrete SMP. The discrete SMP
coupled with the state and adjoint equations forms a discrete Hamiltonian
system. Then we propose a discrete recursive method for SOCPs by approximating
the discrete Hamiltonian system. Considering that the goal of the SOCP is to
select an appropriate control to achieve the optimal cost, we rigorously
analyze errors of the proposed method and proved that the cost obtained by our
method is of first-order convergence. We remark that the numerical algorithm
of our discrete recursive method is consistent with the algorithm in
\cite{FZhZh2020}. Several examples are presented to support the theoretical results.

The rest of the paper is organized as follows. In Section 2, we present some
preliminaries. By establishing a discrete SMP, we propose a discrete recursive
algorithm for solving SOCPs in Section 3. In Section 4, we prove the main
convergence results. In Section 5, various numerical tests are given to
demonstrate high accuracy of our method.

\section{Preliminaries}

We recall some basic results about forward and backward stochastic
differential equations (SDEs) in this section, which can be found in
\cite{KPQ1997,MY2007,PP1990,YZh1999}. We will use the following notations:

$L_{\mathcal{F}}^{2}\left(  0,T;\mathbb{R}^{n}\right)  :$ the set of
$\mathbb{R}^{n}$-valued and $\mathcal{F}_{t}$-adapted stochastic processes
such that $\mathbb{E}[\int_{0}^{T}|\varphi_{t}|^{2}dt]<\infty$.

$C_{b}^{k}:$ the set of continuously differentiable functions $\varphi
:\mathbb{R}^{n}\rightarrow \mathbb{R}$ with uniformly bounded derivatives
$\partial_{x}^{k_{1}}\varphi$ for $k_{1}\leq k$.

$C_{b}^{l,k}:$ the set of continuously differentiable functions $\varphi
:\left[  0,T\right]  \times$ $\mathbb{R}^{n}\rightarrow \mathbb{R}$ with
uniformly bounded partial derivatives $\partial_{t}^{l_{1}}\varphi$ and
$\partial_{x}^{k_{1}}\varphi$ for $l_{1}\leq l$ and $k_{1}\leq k$.

$C_{b}^{l,k,k}:$ the set of continuously differentiable functions
$\varphi:\left[  0,T\right]  \times \mathbb{R}^{n}\times U\rightarrow
\mathbb{R}$ with uniformly bounded partial derivatives $\partial_{t}^{l_{1}%
}\varphi$ and $\partial_{x}^{k_{1}}\partial_{u}^{k_{2}}\varphi$ for $l_{1}\leq
l$ and $k_{1}+k_{2}\leq k$.

We first recall the following standard estimate of SDE.

\begin{lemma}
\label{lemma2}Let $X_{t}^{i}$, $i=1,2,$ be the solution of the following SDE:%
\[
X_{t}^{i}=X_{0}^{i}+\int_{0}^{t}b^{i}\left(  s,X_{s}^{i}\right)  ds+\int
_{0}^{t}\sigma^{i}\left(  s,X_{s}^{i}\right)  dW_{s},
\]
where $b^{i}=b^{i}\left(  s,x\right)  :[0,T]\times \mathbb{R}^{n}%
\rightarrow \mathbb{R}^{n}$ and $\sigma^{i}=\sigma^{i}\left(  s,x\right)
:[0,T]\times \mathbb{R}^{n}\rightarrow \mathbb{R}^{n\times d}$ are Lipschitz in
$x$, $b^{i}\left(  s,0\right)  \in L_{\mathcal{F}}^{2}\left(  0,T;\mathbb{R}%
^{n}\right)  $ and $\sigma^{i}\left(  s,0\right)  \in L_{\mathcal{F}}%
^{2}\left(  0,T;\mathbb{R}^{n\times d}\right)  $. Then there exists a constant
$C>0$ depending on $T$ and the Lipschitz constant such that
\[%
\begin{array}
[c]{l}%
\displaystyle \mathbb{E}\left[  \sup_{0\leq t\leq T}\left \vert X_{t}^{1}%
-X_{t}^{2}\right \vert ^{2}\right]  \leq C\mathbb{E}\left[  \left \vert
X_{0}^{1}-X_{0}^{2}\right \vert ^{2}\right] \\
\displaystyle \text{ \  \  \  \  \  \  \  \  \  \  \ }+C\int_{0}^{T}\mathbb{E}\left[
\left \vert b^{1}\left(  s,X_{s}^{1}\right)  -b^{2}\left(  s,X_{s}^{1}\right)
\right \vert ^{2}+\left \vert \sigma^{1}\left(  s,X_{s}^{1}\right)  -\sigma
^{2}\left(  s,X_{s}^{1}\right)  \right \vert ^{2}\right]  ds.
\end{array}
\]

\end{lemma}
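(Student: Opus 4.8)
The plan is to estimate the difference process $\Delta X_t := X_t^1 - X_t^2$ directly and to close the estimate with Gronwall's inequality. Subtracting the two integral equations gives
\[
\Delta X_t = \Delta X_0 + \int_0^t \left[ b^1(s,X_s^1) - b^2(s,X_s^2)\right] ds + \int_0^t \left[\sigma^1(s,X_s^1) - \sigma^2(s,X_s^2)\right] dW_s .
\]
The key algebraic step is to split each coefficient difference into a ``source'' term, measured entirely at $X_s^1$, plus a term handled by the Lipschitz continuity of the second coefficient:
\[
b^1(s,X_s^1) - b^2(s,X_s^2) = \underbrace{\left[b^1(s,X_s^1) - b^2(s,X_s^1)\right]}_{\text{source}} + \underbrace{\left[b^2(s,X_s^1) - b^2(s,X_s^2)\right]}_{\text{bounded by } L|\Delta X_s|},
\]
and identically for $\sigma$. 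This particular decomposition (adding and subtracting $b^2(s,X_s^1)$ rather than $b^1(s,X_s^2)$) is exactly what forces the source terms in the final bound to be evaluated at $X_s^1$, matching the statement.

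Next I would take the supremum over $t\in[0,T']$ for arbitrary $T'\leq T$, apply the elementary inequality $|a+b+c|^2\leq 3(|a|^2+|b|^2+|c|^2)$, and take expectations. The drift integral is controlled by the Cauchy--Schwarz inequality, which produces a factor of $T$ and turns $\sup_t$ of the integral into an integral of the squared integrand; the stochastic integral is controlled by the Burkholder--Davis--Gundy inequality, so that its supremum in expectation is dominated by $C\,\mathbb{E}[\int_0^{T'}|\sigma^1(s,X_s^1)-\sigma^2(s,X_s^2)|^2 ds]$. Inserting the decomposition above and using the Lipschitz bounds on $b^2$ and $\sigma^2$, I obtain
\begin{align*}
\mathbb{E}\left[\sup_{0\leq t\leq T'}|\Delta X_t|^2\right] &\leq C\,\mathbb{E}\left[|\Delta X_0|^2\right] + C\int_0^{T'}\mathbb{E}\left[|\Delta X_s|^2\right]ds \\
&\quad + C\int_0^{T'}\mathbb{E}\left[\left|b^1(s,X_s^1)-b^2(s,X_s^1)\right|^2 + \left|\sigma^1(s,X_s^1)-\sigma^2(s,X_s^1)\right|^2\right]ds,
\end{align*}
where $C$ depends only on $T$ and the Lipschitz constant. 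Since $\mathbb{E}[|\Delta X_s|^2]\leq g(s):=\mathbb{E}[\sup_{0\leq r\leq s}|\Delta X_r|^2]$, this reads $g(T')\leq A + C\int_0^{T'} g(s)\,ds$ with $A$ the first plus third terms above, and Gronwall's inequality yields $g(T)\leq A\,e^{CT}$, which is precisely the asserted estimate.

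The main obstacle is not any single inequality but making the argument rigorous: the Burkholder--Davis--Gundy step presupposes that the stochastic integral is a genuine square-integrable martingale, and Gronwall's inequality may be applied only after one knows that $g(T')$ is finite, so that the $C\int g$ term can legitimately be absorbed rather than left as a circular self-reference. Both facts follow from the standard a priori moment bound $\mathbb{E}[\sup_{0\leq t\leq T}|X_t^i|^2]<\infty$, which is guaranteed by the Lipschitz and linear-growth hypotheses together with $b^i(s,0)\in L^2_{\mathcal{F}}$ and $\sigma^i(s,0)\in L^2_{\mathcal{F}}$. I would therefore record this moment bound as the first step before carrying out the estimate above, after which Gronwall applies on all of $[0,T]$ without circularity.
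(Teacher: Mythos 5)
Your proof is correct. The paper does not actually prove this lemma---it is recalled in Section 2 as ``the following standard estimate of SDE'' with the preliminaries pointing to the standard references---so there is no in-paper argument to compare against; your argument (the decomposition pivoting at $X_s^1$ so that the source terms are evaluated along $X^1$, Cauchy--Schwarz for the drift, Burkholder--Davis--Gundy for the stochastic integral, and Gronwall, preceded by the a priori bound $\mathbb{E}\bigl[\sup_{0\leq t\leq T}|X_t^i|^2\bigr]<\infty$ to rule out circularity) is precisely the canonical proof those references give. The only implicit point worth recording is that the a priori moment bound requires $X_0^i\in L^2$; when $\mathbb{E}\bigl[|X_0^1-X_0^2|^2\bigr]=\infty$ the asserted inequality is vacuous, and in the paper's applications the initial state is deterministic, so nothing is lost.
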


The following lemma is the well-known Feynman-Kac formula, which gives the
stochastic representation for the solutions to some parabolic partial
differential equations (PDEs).
%in terms of the solutions to some corresponding
%stochastic differential equations (SDEs) and backward stochastic differential
%equations (BSDEs).

\begin{lemma}
\label{F-K}Assume the functions $b:[0,T]\times \mathbb{R}^{n}\rightarrow
\mathbb{R}^{n}$, $\sigma:[0,T]\times \mathbb{R}^{n}\rightarrow \mathbb{R}%
^{n\times d}$, $g:\mathbb{R}^{n}\rightarrow \mathbb{R}^{n}$ and $F:\left[
0,T\right]  \times \mathbb{R}^{n}\times \mathbb{R}^{n}\times \mathbb{R}^{n\times
d}\rightarrow \mathbb{R}^{n}$ are uniformly Lipschitz continuous w.r.t.
$\left(  x,y,z\right)  $ and continuous w.r.t. $t$,$\ $and the matrix-valued
function $a=\sigma \sigma^{\top}$ is uniformly elliptic. For any given $\left(
t,x\right)  \in \left[  0,T\right]  \times \mathbb{R}^{n}$, $\left(  Y_{\cdot
}^{t,x},Z_{\cdot}^{t,x}\right)  $ is the solution of the following FBSDEs:
\begin{equation}
\left \{
\begin{array}
[c]{ll}%
dX_{s}^{t,x}=b\left(  s,X_{s}^{t,x}\right)  ds+\sigma \left(  s,X_{s}%
^{t,x}\right)  dW_{s},\text{ \ } & s\in \left[  t,T\right]  ,\\
dY_{s}^{t,x}=-F\left(  s,X_{s}^{t,x},Y_{s}^{t,x},Z_{s}^{t,x}\right)
ds+Z_{s}^{t,x}dW_{s}, & s\in \left[  t,T\right]  ,\\
X_{t}^{t,x}=x, & \\
Y_{T}^{t,x}=g\left(  X_{T}^{t,x}\right)  . &
\end{array}
\right.
\end{equation}
Then $v\left(  t,x\right)  =Y_{t}^{t,x}\ $is a unique solution of the
following PDE:%
\begin{equation}
\left \{
\begin{array}
[c]{l}%
\mathcal{L}v\left(  t,x\right)  =-F\left(  t,x,v\left(  t,x\right)
,\sigma \left(  t,x\right)  \partial_{x}v\left(  t,x\right)  \right)  ,\\
v\left(  T,x\right)  =g\left(  x\right)  ,\  \  \  \  \  \forall \left(  t,x\right)
\in \left[  0,T\right]  \times \mathbb{R}^{n},
\end{array}
\right.  \label{f-k}%
\end{equation}
where $\mathcal{L}$ is the differential operator defined by
\[
\mathcal{L}=\frac{\partial}{\partial t}+\sum \limits_{i=1}^{n}b_{i}\left(
t,x\right)  \frac{\partial}{\partial x_{i}}+\frac{1}{2}\sum \limits_{i,j=1}%
^{n}\left[  \sigma \sigma^{\top}\right]  _{i,j}\left(  t,x\right)
\frac{\partial^{2}}{\partial x_{i}\partial x_{j}}.
\]
Furthermore, for $k=0,1,2,\ldots,$ if $b,\sigma \in C_{b}^{1+k,2+2k}$, $F\in
C_{b}^{1+k,2+2k,2+2k,2+2k}$ and $g\in$ $C_{b}^{2+2k+\alpha}$ for some
$\alpha \in \left(  0,1\right)  $, then $v\in C_{b}^{1+k,2+2k}$.
\end{lemma}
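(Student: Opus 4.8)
The plan is to prove the result in two stages: first establish the probabilistic representation $v(t,x)=Y_{t}^{t,x}$ and show that it solves \eqref{f-k}, and then bootstrap the smoothness of the coefficients into the stated regularity of $v$. For the first stage I would begin with existence and uniqueness for the decoupled system: under the Lipschitz hypotheses the forward SDE has a unique strong solution $X_{\cdot}^{t,x}$ (to which the estimate of Lemma \ref{lemma2} applies), and, $X_{\cdot}^{t,x}$ being fixed, the backward equation with Lipschitz driver $F$ and terminal datum $g(X_{T}^{t,x})$ admits a unique adapted solution $(Y_{\cdot}^{t,x},Z_{\cdot}^{t,x})$ by the classical theory of \cite{PP1990,KPQ1997}. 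The quantity $v(t,x):=Y_{t}^{t,x}$ is then well defined; since $X_{s}^{t,x}$ for $s\geq t$ depends only on the Brownian increments after time $t$, the variable $Y_{t}^{t,x}$ is both $\mathcal{F}_{t}$-measurable and independent of $\mathcal{F}_{t}$, hence deterministic. Combining the flow property $X_{r}^{t,x}=X_{r}^{s,X_{s}^{t,x}}$ for $r\geq s\geq t$ with uniqueness of the FBSDE yields the Markov identity $Y_{s}^{t,x}=v(s,X_{s}^{t,x})$ almost surely.

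Granting for the moment that $v\in C_{b}^{1,2}$ (established in the third step below), I would derive the PDE by applying It\^{o}'s formula to $s\mapsto v(s,X_{s}^{t,x})$ and comparing with the backward dynamics. Matching the martingale parts identifies $Z_{s}^{t,x}$ with $\sigma(s,X_{s}^{t,x})\partial_{x}v(s,X_{s}^{t,x})$ in the notation of \eqref{f-k}, while matching the bounded-variation parts gives $\mathcal{L}v(s,X_{s}^{t,x})=-F(s,X_{s}^{t,x},v,\sigma\partial_{x}v)$ for all $s\in[t,T]$; evaluating at $s=t$, where $X_{t}^{t,x}=x$, produces \eqref{f-k} pointwise, and the terminal condition is immediate from $Y_{T}^{t,x}=g(X_{T}^{t,x})$. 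Uniqueness among classical solutions follows by reversing the argument: if $\tilde{v}$ is any $C_{b}^{1,2}$ solution, It\^{o}'s formula shows that $(\tilde{v}(s,X_{s}^{t,x}),\sigma\partial_{x}\tilde{v}(s,X_{s}^{t,x}))$ solves the same BSDE, so uniqueness of the BSDE forces $\tilde{v}(t,x)=Y_{t}^{t,x}=v(t,x)$.

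The substantive work is the regularity claim, which I would prove by induction on $k$ using the uniform ellipticity of $a=\sigma\sigma^{\top}$. To avoid circularity, I would first note that the representation and Lemma \ref{lemma2} give that $v$ is bounded and continuous and is a viscosity solution of the uniformly parabolic system \eqref{f-k}. For the base case $k=0$, the coefficients and right-hand side are then H\"{o}lder continuous, and interior parabolic Schauder estimates upgrade $v$ to the parabolic H\"{o}lder class $C^{1+\alpha/2,2+\alpha}$, in particular $v\in C_{b}^{1,2}$. The inductive step differentiates the equation $k$ times in $x$: each differentiated system is again uniformly parabolic with right-hand side controlled by the already-established lower-order derivatives of $v$ together with the assumed derivatives of $b,\sigma$ in $C_{b}^{1+k,2+2k}$, of $F$ in $C_{b}^{1+k,2+2k,2+2k,2+2k}$ and of $g$ in $C_{b}^{2+2k+\alpha}$, so a further Schauder application gains two spatial and one temporal order, yielding $v\in C_{b}^{1+k,2+2k}$. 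An equivalent purely probabilistic route is to differentiate the FBSDE in the initial datum $x$, obtaining linear variational FBSDEs for $\partial_{x}X$, $\partial_{x}Y$, $\partial_{x}Z$, and to estimate these uniformly in $(t,x)$, iterating for higher orders.

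The main obstacle is this regularity bootstrap. The delicate points are (i) justifying differentiation under the stochastic integral and controlling the variational processes uniformly in $(t,x)$ on the probabilistic route, or equivalently (ii) verifying that the coefficients of the successively differentiated systems meet the H\"{o}lder hypotheses that Schauder theory requires, while tracking the gain of regularity so that the indices close at exactly $C_{b}^{1+k,2+2k}$. The uniform ellipticity assumption is what makes either route work, and the extra H\"{o}lder exponent $\alpha$ on the terminal datum $g$ is precisely what the $k=0$ Schauder step needs in order to reach $C^{2+\alpha}$ in space.
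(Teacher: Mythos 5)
The paper does not actually prove this lemma: it is recalled as a known classical result, with the proof deferred to the references \cite{KPQ1997,MY2007,PP1990,YZh1999}, so there is no internal argument to compare yours against line by line. Your sketch reconstructs the standard proof contained in those references --- well-posedness of the decoupled FBSDE under the Lipschitz hypotheses, determinism of $Y_{t}^{t,x}$ via measurability with respect to the future increments independent of $\mathcal{F}_{t}$, the Markov identity $Y_{s}^{t,x}=v(s,X_{s}^{t,x})$ from the flow property and uniqueness, the It\^{o} computation matching the martingale part to $\sigma\partial_{x}v$ and the drift to $-F$, uniqueness among classical solutions by running It\^{o} in reverse, and a regularity bootstrap driven by uniform ellipticity --- and it is correct in outline. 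One step deserves more care as written: parabolic Schauder estimates apply to classical solutions, so you cannot literally ``upgrade'' the viscosity solution by applying them to $v$ directly. The standard repair is to construct a classical solution $\tilde{v}\in C^{1+\alpha/2,2+\alpha}$ of the semilinear uniformly parabolic problem \eqref{f-k} by PDE theory (this is exactly where $g\in C_{b}^{2+2k+\alpha}$ and the uniform ellipticity of $a=\sigma\sigma^{\top}$ enter), and then identify $\tilde{v}=v$ by precisely the BSDE-uniqueness argument you already give in your second paragraph; moreover, since $F$ depends on the gradient term $\sigma\partial_{x}v$, the Schauder step does not close by itself and needs an intermediate spatial $C^{1+\beta}$ estimate (via $W^{2,p}$ theory or the probabilistic bound on the variational process $\partial_{x}Y$) before the full $C^{2+\alpha}$ gain is available. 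With that rearrangement your induction on $k$ lands at $v\in C_{b}^{1+k,2+2k}$ as stated, and your alternative probabilistic route (differentiating the FBSDE in $x$) is likewise a legitimate path used in the cited literature.
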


\begin{remark}
\label{remark_F-K}In the case when $F\left(  t,x,y,z\right)  \equiv0$, it is
easy to see $v\left(  t,x\right)  =\mathbb{E}\left[  g(X_{T}^{t,x})\right]  $,
and $\left(  \ref{f-k}\right)  $ reduces to%
\begin{equation}
\left \{
\begin{array}
[c]{l}%
\mathcal{L}v\left(  t,x\right)  =0,\text{ \ }\forall \left(  t,x\right)
\in \left[  0,T\right]  \times \mathbb{R}^{n},\\
v\left(  T,x\right)  =g\left(  x\right)  .
\end{array}
\right.  \label{SDE-PDE}%
\end{equation}

\end{remark}

\section{The discrete recursive method}

In this section, we propose the discrete recursive method for SOCPs. Consider
the stochastic control system $\left(  \ref{SDE}\right)  -\left(
\ref{2}\right)  $. Let $\left(  X_{\cdot}^{\ast},u_{\cdot}^{\ast}\right)  $ be
the optimal pair defined in $\left(  \ref{3}\right)  $, and assume that
$u_{t}^{\ast}=\phi^{\ast}\left(  t,X_{t}^{\ast}\right)  $, where $\phi^{\ast
}:\left[  0,T\right]  \times \mathbb{R}^{n}\rightarrow U$ is a function. For
simplicity of presentation, we suppose $d=1$. We need the following assumption.

\begin{description}
\item[$\left(  A1\right)  $] For $\varphi=b,\sigma,f,h$\ and $\phi^{\ast}$,
$\varphi$, $\varphi_{x}$, $\varphi_{u}$ are continuous in $\left(
t,x,u\right)  $ and $\varphi_{x}$, $\varphi_{u}$ are bounded.
\end{description}

For the time interval $\left[  0,T\right]  $ and a given positive integer $N$,
we use the following uniform partition:
\[
0=t_{0}<t_{1}<\cdots<t_{N}=T,
\]
with $\Delta t:=t_{i+1}-t_{i}=T/N$, and denote $\Delta W_{t_{i+1}}%
:=W_{t_{i+1}}-W_{t_{i}}$ for $0\leq i\leq N-1$. Define the piecewise
admissible control set%
\begin{equation}
\mathcal{U}^{N}[0,T]=\left \{  u_{t}=\sum_{i=0}^{N-1}\phi_{i}({X}_{t_{i}%
})I_{\left[  t_{i},t_{i+1}\right)  }(t):\phi_{i}(\cdot)\in C_{b}^{1}\left(
\mathbb{R}^{n};U\right)  \right \}  . \label{discrete control set}%
\end{equation}
This means that it takes the feedback value of the state process at $t_{i}$ as
the control on $\left[  t_{i},t_{i+1}\right]  $, and one can check that
$\mathcal{U}^{N}[0,T]\subset \mathcal{U}[0,T]$ is convex. Now we define the
discrete optimal control problem over $\mathcal{U}^{N}[0,T]$:
\begin{equation}
J\left(  \bar{u}\right)  =\min_{u_{\cdot}\in \mathcal{U}^{N}[0,T]}J\left(
u\right)  . \label{cost function}%
\end{equation}
We call $\bar{u}_{\cdot}$ a discrete optimal control, which has the following
expression%
\begin{equation}
\bar{u}_{t}=\sum_{i=0}^{N-1}\bar{\phi}_{i}(\bar{X}_{t_{i}})I_{\left[
t_{i},t_{i+1}\right)  }(t),\text{ \ }\bar{\phi}_{i}(\cdot)\in C_{b}^{1}\left(
\mathbb{R}^{n};U\right)  . \label{u}%
\end{equation}
The corresponding $\bar{X}_{\cdot}$ and $\left(  \bar{X}_{\cdot},\bar
{u}_{\cdot}\right)  $ are called a discrete optimal state process and discrete
optimal pair, respectively. We remark that $\left(  \bar{X}_{\cdot},\bar
{u}_{\cdot}\right)  $ essentially depends on the time partition $N$. For
simplicity, we omit $N$ without causing confusion. Set
\[%
\begin{array}
[c]{ll}%
b(\cdot)=(b_{1}(\cdot),\ldots,b_{n}(\cdot))^{\top}, & \sigma(\cdot
)=(\sigma_{1}(\cdot),\ldots,\sigma_{n}(\cdot))^{\top},\\
b^{i}(t)=b(t,\bar{X}_{t},\bar{\phi}_{i}(\bar{X}_{t_{i}})), & \sigma
^{i}(t)=\sigma(t,\bar{X}_{t},\bar{\phi}_{i}(\bar{X}_{t_{i}})),
\end{array}
\]
and define similarly $b_{x}^{i}(t),b_{u}^{i}(t),\sigma_{x}^{i}(t)\ $and
$b_{u}^{i}(t)$, where
\[
b_{x}(\cdot)=\left[
\begin{array}
[c]{rrr}%
b_{1x_{1}}(\cdot), & \ldots & ,b_{1x_{n}}(\cdot)\\
\multicolumn{1}{c}{\vdots} & \multicolumn{1}{c}{} & \multicolumn{1}{c}{\vdots
}\\
b_{nx_{1}}(\cdot), & \ldots & ,b_{nx_{n}}(\cdot)
\end{array}
\right]  ,
\]
and the other derivatives can be similarly defined. Under assumption $\left(
A1\right)  $, the discrete optimal state process $\bar{X}_{\cdot}$ can be
uniquely solved by the following piecewise equation:%
\begin{equation}
\left \{
\begin{array}
[c]{l}%
d\bar{X}_{t}=b^{i}(t)dt+\sigma^{i}(t)dW_{t},\text{ }t\in \lbrack t_{i}%
,t_{i+1}],\\
\bar{X}_{t_{i}}=\bar{X}_{t_{i}},\ i=0,1,\ldots,N-1,
\end{array}
\right.  \label{4}%
\end{equation}
where $\bar{X}_{0}=x_{0}$.

\subsection{Discrete stochastic maximum principle}

In this subsection, we derive a discrete SMP, which plays an important role in
the proposal of the discrete recursive method for SOCPs. For any fixed integer
$0\leq i\leq N-1$, take an arbitrary $\phi_{i}(\cdot)\in C_{b}^{1}\left(
\mathbb{R}^{n};U\right)  $. For each $0\leq \varepsilon \leq1$, we introduce
$\phi_{i}^{\varepsilon}(\cdot)=\bar{\phi}_{i}(\cdot)+\varepsilon \delta \phi
_{i}\left(  \cdot \right)  \in C_{b}^{1}\left(  \mathbb{R}^{n};U\right)  $ on
$\left[  t_{i},t_{i+1}\right]  $ with $\delta \phi_{i}(\cdot)=\phi_{i}%
(\cdot)-\bar{\phi}_{i}(\cdot)$. Denote
\[
u_{t}^{i,\varepsilon}=\sum_{j=0}^{i-1}\bar{\phi}_{j}(X_{t_{j}}^{i,\varepsilon
})I_{[t_{j},t_{j+1})}(t)+\phi_{i}^{\varepsilon}(X_{t_{i}}^{i,\varepsilon
})I_{\left[  t_{i},t_{i+1}\right)  }(t)+\sum_{j=i+1}^{N-1}\bar{\phi}%
_{j}(X_{t_{j}}^{i,\varepsilon})I_{[t_{j},t_{j+1})}(t).
\]
It is easy to see that $u_{\cdot}^{i,\varepsilon}\in \mathcal{U}^{N}[0,T]$, and
the corresponding state process $X_{t}^{i,\varepsilon}\equiv \bar{X}_{t}$ on
$t\in \lbrack0,t_{i}]$,
\begin{align}
&  \left \{
\begin{array}
[c]{l}%
dX_{t}^{i,\varepsilon}=b\left(  t,X_{t}^{i,\varepsilon},\phi_{i}^{\varepsilon
}(X_{t_{i}}^{i,\varepsilon})\right)  dt+\sigma \left(  t,X_{t}^{i,\varepsilon
},\phi_{i}^{\varepsilon}(X_{t_{i}}^{i,\varepsilon})\right)  dW_{t},\\
X_{t_{i}}^{i,\varepsilon}=\bar{X}_{t_{i}},\text{ \ }t\in \lbrack t_{i}%
,t_{i+1}],
\end{array}
\right.  \text{\  \ }\label{5.1}\\
&  \left \{
\begin{array}
[c]{l}%
dX_{t}^{i,\varepsilon}=b\left(  t,X_{t}^{i,\varepsilon},\bar{\phi}%
_{j}(X_{t_{j}}^{i,\varepsilon})\right)  dt+\sigma \left(  t,X_{t}%
^{i,\varepsilon},\bar{\phi}_{j}(X_{t_{j}}^{i,\varepsilon})\right)
dW_{t},\text{ \ }\\
X_{t_{j}}^{i,\varepsilon}=X_{t_{j}}^{i,\varepsilon},\text{ \ }t\in \lbrack
t_{j},t_{j+1}],\text{ \ }j=i+1,\ldots,N-1.
\end{array}
\right.  \label{5.2}%
\end{align}
The variational equation $\hat{X}_{\cdot}^{i}$ can be given as follows:
$\hat{X}_{t}^{i}\equiv0$ on $t\in \lbrack0,t_{i}]$,
\begin{align}
&  \left \{
\begin{array}
[c]{l}%
d\hat{X}_{t}^{i}=\left[  b_{x}^{i}(t)\hat{X}_{t}^{i}+b_{u}^{i}(t)\delta
\phi_{i}(\bar{X}_{t_{i}})\right]  dt\\
\text{ \  \  \  \  \  \  \  \  \  \  \  \  \  \  \  \  \  \ }+\left[  \sigma_{x}^{i}(t)\hat
{X}_{t}^{i}+\sigma_{u}^{i}(t)\delta \phi_{i}(\bar{X}_{t_{i}})\right]  dW_{t},\\
\hat{X}_{t_{i}}^{i}=0,\text{\  \ }t\in \lbrack t_{i},t_{i+1}],
\end{array}
\right.  \text{\  \  \  \  \  \  \  \  \  \  \  \  \  \ }\label{variation equation}\\
&  \left \{
\begin{array}
[c]{l}%
d\hat{X}_{t}^{i}=\left[  b_{x}^{j}(t)\hat{X}_{t}^{i}+b_{u}^{j}(t)\bar{\phi
}_{j,x}(\bar{X}_{t_{j}})\hat{X}_{t_{j}}^{i}\right]  dt\\
\text{ \  \  \  \  \  \  \  \  \  \  \  \  \  \ }+\left[  \sigma_{x}^{j}(t)\hat{X}_{t}%
^{i}+\sigma_{u}^{j}(t)\bar{\phi}_{j,x}(\bar{X}_{t_{j}})\hat{X}_{t_{j}}%
^{i}\right]  dW_{t},\\
\hat{X}_{t_{j}}^{i}=\hat{X}_{t_{j}}^{i},\text{ \ }t\in \lbrack t_{j}%
,t_{j+1}],\text{ }j=i+1,\ldots,N-1.
\end{array}
\right.  \text{ \ } \label{variation equation2}%
\end{align}

We also introduce the following adjoint equation:
\begin{equation}
\left \{
\begin{array}
[c]{l}%
-d\bar{P}_{t}=H_{x}\left(  \bar{X}_{t},\bar{P}_{t},\bar{Q}_{t},\bar{\phi}%
_{i}(\bar{X}_{t_{i}})\right)  dt-\bar{Q}_{t}dW_{t},\\
\bar{P}_{t_{i+1}}=\bar{P}_{t_{i+1}},\text{\ }t\in \lbrack t_{i},t_{i+1}%
],\text{\ }i=0,1,\ldots,N-1,
\end{array}
\right.  \text{ \  \  \  \  \  \  \  \  \  \  \  \  \  \  \  \ } \label{adjoint equation}%
\end{equation}
with $\bar{P}_{T}=h_{x}(\bar{X}_{T})$, where the Hamiltonian $H:\left[
0,T\right]  \times \mathbb{R}^{n}\times \mathbb{R}^{n}\times \mathbb{R}^{n}\times
U\rightarrow \mathbb{R}$ is defined as follows:%
\[
H\left(  t,x,p,q,u\right)  =\left \langle p,b\left(  t,x,u\right)
\right \rangle +\left \langle q,\sigma \left(  t,x,u\right)  \right \rangle
+f\left(  t,x,u\right)  ,
\]
and denote
\[%
\begin{array}
[c]{r}%
H_{x}(\cdot)=(H_{x_{1}}(\cdot),\ldots,H_{x_{n}}(\cdot))^{\top},H_{u}%
(\cdot)=(H_{u_{1}}(\cdot),\ldots,H_{u_{m}}(\cdot))^{\top},h_{x}(\cdot
)=(h_{x_{1}}(\cdot),\ldots,h_{x_{n}}(\cdot))^{\top}.
\end{array}
\]

Now we establish the following discrete SMP.

\begin{theorem}
\label{theorem1}Suppose $\left(  A1\right)  $ holds. Let $\left(  \bar
{X}_{\cdot},\bar{u}_{\cdot}\right)  $ be the discrete optimal pair of the
problem $\left(  \ref{cost function}\right)  $, and let $\left(  \bar
{P}_{\cdot},\bar{Q}_{\cdot}\right)  $ be the solution to $\left(
\ref{adjoint equation}\right)  $. Then for $i=N-1,\ldots,1,0$,
\begin{equation}
\mathbb{E}\left[  \left \langle \int_{t_{i}}^{t_{i+1}}H_{u}\left(  t,\bar
{X}_{t},\bar{P}_{t},\bar{Q}_{t},\bar{\phi}_{i}(\bar{X}_{t_{i}})\right)
dt,\phi_{i}(\bar{X}_{t_{i}})-\bar{\phi}_{i}(\bar{X}_{t_{i}})\right \rangle
\right]  \geq0,\text{ \ }\forall \phi_{i}(\cdot)\in C_{b}^{1}(\mathbb{R}%
^{n};U). \label{Theorem 1.2}%
\end{equation}
Furthermore, if $\bar{\phi}_{i}(x)$ is an interior point of $U$, for
$x\in \mathbb{R}^{n}$, then
\begin{equation}
\mathbb{E}\left[  \left.  \int_{t_{i}}^{t_{i+1}}H_{u}\left(  t,\bar{X}%
_{t},\bar{P}_{t},\bar{Q}_{t},\bar{\phi}_{i}(x)\right)  dt\right \vert \bar
{X}_{t_{i}}=x\right]  =0,\text{ }P_{\bar{X}_{t_{i}}}\text{-a.s.}
\label{Theorem 1}%
\end{equation}

\end{theorem}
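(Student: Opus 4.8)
The plan is to run the classical convex (Gateaux) variation argument, adapted to the piecewise feedback structure, and to close it by backward induction on $i$. Since $\mathcal{U}^{N}[0,T]$ is convex and $u_{\cdot}^{i,\varepsilon}\in\mathcal{U}^{N}[0,T]$ with $u_{\cdot}^{i,0}=\bar{u}_{\cdot}$, optimality of $\bar{u}$ gives $J(u^{i,\varepsilon})\geq J(\bar{u})$ for every $\varepsilon\in[0,1]$, whence $\frac{d}{d\varepsilon}J(u^{i,\varepsilon})\big|_{\varepsilon=0^{+}}\geq0$. The first task is therefore to justify this one-sided derivative and to compute it.

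First I would establish the first-order expansion $X_{t}^{i,\varepsilon}=\bar{X}_{t}+\varepsilon\hat{X}_{t}^{i}+o(\varepsilon)$ in $L^{2}$, that is $\mathbb{E}[\sup_{0\leq t\leq T}|\varepsilon^{-1}(X_{t}^{i,\varepsilon}-\bar{X}_{t})-\hat{X}_{t}^{i}|^{2}]\to0$ as $\varepsilon\to0$. This is exactly where Lemma~\ref{lemma2} is used: comparing the state equations $(\ref{5.1})$--$(\ref{5.2})$ with the variational system $(\ref{variation equation})$--$(\ref{variation equation2})$ and invoking $(A1)$ (continuity and boundedness of $b_{x},b_{u},\sigma_{x},\sigma_{u}$) together with the $C_{b}^{1}$ regularity of the feedback maps $\bar{\phi}_{j}$, the stability estimate controls the remainder on each subinterval, and the bounds propagate across the grid by a discrete Gronwall argument. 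This legitimizes differentiating $J$ under the expectation and yields $\frac{d}{d\varepsilon}J(u^{i,\varepsilon})\big|_{0}$ explicitly in terms of $\hat{X}^{i}$, $f_{x}$, $f_{u}$ and $h_{x}$, with $\delta\phi_{i}(\bar{X}_{t_{i}})$ entering directly on $[t_{i},t_{i+1}]$ and the propagated terms $\bar{\phi}_{j,x}(\bar{X}_{t_{j}})\hat{X}_{t_{j}}^{i}$ entering on each later subinterval.

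Next I would apply It\^{o}'s formula to $\langle\bar{P}_{t},\hat{X}_{t}^{i}\rangle$ on $[t_{i},T]$ and take expectations. Using the adjoint equation $(\ref{adjoint equation})$, the terminal condition $\bar{P}_{T}=h_{x}(\bar{X}_{T})$, the initialization $\hat{X}_{t_{i}}^{i}=0$, and the identity $H_{x}=b_{x}^{\top}\bar{P}+\sigma_{x}^{\top}\bar{Q}+f_{x}$ (so the $b_{x},\sigma_{x}$ contributions cancel against $H_{x}$), every term carrying $f_{x}$ together with the terminal $h_{x}$ term is absorbed, and with $b_{u}^{\top}\bar{P}+\sigma_{u}^{\top}\bar{Q}=H_{u}-f_{u}$ the first-order condition collapses to
\begin{align*}
0\leq{}&\mathbb{E}\left[\int_{t_{i}}^{t_{i+1}}\langle H_{u}(t,\bar{X}_{t},\bar{P}_{t},\bar{Q}_{t},\bar{\phi}_{i}(\bar{X}_{t_{i}})),\delta\phi_{i}(\bar{X}_{t_{i}})\rangle\,dt\right]\\
&+\sum_{j=i+1}^{N-1}\mathbb{E}\left[\int_{t_{j}}^{t_{j+1}}\langle H_{u}(t,\bar{X}_{t},\bar{P}_{t},\bar{Q}_{t},\bar{\phi}_{j}(\bar{X}_{t_{j}})),\bar{\phi}_{j,x}(\bar{X}_{t_{j}})\hat{X}_{t_{j}}^{i}\rangle\,dt\right].
\end{align*}

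The crux is to annihilate the sum over $j>i$, which I would do by backward induction on $i$. For $i=N-1$ the sum is empty and $(\ref{Theorem 1.2})$ follows at once; taking the deviation $\phi_{N-1}$ to concentrate near an arbitrary point and using the tower property then yields the $P_{\bar{X}_{t_{N-1}}}$-a.s. pointwise inequality, and the interior case gives $(\ref{Theorem 1})$. For the inductive step I would use that $\bar{X}_{t_{j}}$ is a Markov chain at the grid points and that $(\bar{P}_{t},\bar{Q}_{t})$ is a deterministic function of $\bar{X}_{t}$ by the Feynman--Kac representation of Lemma~\ref{F-K}; hence $\mathbb{E}[\int_{t_{j}}^{t_{j+1}}H_{u}\,dt\mid\mathcal{F}_{t_{j}}]=\Psi_{j}(\bar{X}_{t_{j}})$ for a measurable $\Psi_{j}$. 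Since $\bar{\phi}_{j,x}(\bar{X}_{t_{j}})\hat{X}_{t_{j}}^{i}$ is $\mathcal{F}_{t_{j}}$-measurable, the tower property rewrites each propagated term as $\mathbb{E}[\langle\Psi_{j}(\bar{X}_{t_{j}}),\bar{\phi}_{j,x}(\bar{X}_{t_{j}})\hat{X}_{t_{j}}^{i}\rangle]$, which vanishes once the conditional optimality at step $j$, carried by the induction hypothesis in its pointwise form $(\ref{Theorem 1})$, forces $\Psi_{j}\equiv0$. With the sum gone the display reduces to $(\ref{Theorem 1.2})$, and the conditioning/localization step above promotes it to the a.s. inequality and, under interiority, to $(\ref{Theorem 1})$. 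I expect the two delicate points to be the \emph{uniform} $o(\varepsilon)$ variational estimate across all subintervals, and precisely this downstream cancellation: it rests on the Markov/Feynman--Kac representation of $(\bar{P},\bar{Q})$ and on the conditional optimality supplied inductively, the subtlety being the boundary case, where one must invoke the complementarity between $\Psi_{j}$ and the tangential direction $\bar{\phi}_{j,x}$ rather than $\Psi_{j}\equiv0$.
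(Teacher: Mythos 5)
Your proposal is correct in substance, but it organizes the proof genuinely differently from the paper. The paper never forms the full-horizon first-order expansion of $J$: instead it introduces the recursive value functions $V^{i}$ of Lemma \ref{relation lemma}, writes $J(u^{i,\varepsilon})-J(\bar{u})$ as a one-interval running-cost difference plus $\mathbb{E}[V^{i+1}(X_{t_{i+1}}^{i,\varepsilon})-V^{i+1}(\bar{X}_{t_{i+1}})]$, and applies It\^{o}'s formula to $\bar{P}_{t}\hat{X}_{t}^{i}$ only on $[t_{i},t_{i+1}]$, the downstream propagation being absorbed into the identity $V_{x}^{i+1}(\bar{X}_{t_{i+1}})=\bar{P}_{t_{i+1}}$, which is itself carried backwards by Lemma \ref{relation lemma} whose hypothesis $(\ref{18})$ is verified through exactly the interior/boundary dichotomy. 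You instead run the variation and the duality over all of $[t_{i},T]$, exhibit the propagated terms $\sum_{j>i}\mathbb{E}[\int_{t_{j}}^{t_{j+1}}\langle H_{u},\bar{\phi}_{j,x}(\bar{X}_{t_{j}})\hat{X}_{t_{j}}^{i}\rangle\,dt]$ explicitly, and kill them by the tower property plus the conditional SMP at the later steps; the condition you need, $(\bar{\phi}_{j,x}(x))^{\top}\Psi_{j}(x)=0$, is literally the paper's $(\ref{13})$, reached without introducing value functions but at the price of requiring the Markov representation $\bar{P}_{t}=m_{j}(t,\bar{X}_{t})$, $\bar{Q}_{t}=g_{j}(t,\bar{X}_{t})$ propagated across grid points (the paper only invokes this later, in the proof of Lemma \ref{lemma6}) and a uniform $o(\varepsilon)$ expansion on the whole horizon, which is the content of Lemma \ref{lemma1} rather than Lemma \ref{lemma2} (the latter is the stability estimate one would use to prove it). Two places to tighten: your first formulation of the inductive step, that the hypothesis \emph{forces} $\Psi_{j}\equiv0$, is too strong, since $(\ref{Theorem 1})$ holds only where $\bar{\phi}_{j}(x)$ is interior to $U$ --- your closing complementarity remark is the correct fix, and it is discharged as in the paper by noting that at a boundary value $x$ is an extremum of $\bar{\phi}_{j}$ so $\bar{\phi}_{j,x}(x)=0$ (in general dimension: $\pm\bar{\phi}_{j,x}(x)\xi$ lies in the tangent cone of $U$ at $\bar{\phi}_{j}(x)$ while $\Psi_{j}(x)$ lies in the normal cone, so the pairing vanishes). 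Second, the upgrade from the integrated inequality $(\ref{Theorem 1.2})$ to its conditional pointwise form requires the localization $\phi_{j}=\bar{\phi}_{j}+\chi\cdot(v-\bar{\phi}_{j})$ with smooth bumps $\chi$ and a countable dense set of $v\in U$ (convexity of $U$ keeps the perturbation admissible); you gesture at this for the base case, but your inductive step consumes the pointwise form at every $j>i$, so it must be carried along the induction explicitly.
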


In order to prove Theorem \ref{theorem1}, we need the following lemmas.

\begin{lemma}
\label{lemma1}Suppose $\left(  A1\right)  $ holds. Then for any integer $0\leq
i\leq N-1$,%
\begin{equation}
\lim_{\varepsilon \downarrow0}\sup_{t\in \lbrack0,T]}\mathbb{E}\left[
|\tilde{X}_{t}^{i,\varepsilon}|^{2}\right]  =0, \label{I}%
\end{equation}
where%
\[
\tilde{X}_{t}^{i,\varepsilon}=\varepsilon^{-1}[X_{t}^{i,\varepsilon}-\bar
{X}_{t}]-\hat{X}_{t}^{i}.
\]

\end{lemma}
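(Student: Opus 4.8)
The plan is to prove $(\ref{I})$ block by block, inducting over the finitely many subintervals $[t_{j},t_{j+1}]$, $j=i,i+1,\dots,N-1$, and on each block reducing the claim to an $L^{2}$-estimate for a linear SDE whose source term vanishes as $\varepsilon\downarrow0$. First I would record two a priori estimates. Using Lemma \ref{lemma2} together with the boundedness of $b_{x},\sigma_{x}$ and the Lipschitz continuity of the feedbacks $\bar{\phi}_{j}\in C_{b}^{1}$ (all from $(A1)$), I would show, block by block, that
\[
\mathbb{E}\Big[\sup_{0\leq t\leq T}|X_{t}^{i,\varepsilon}-\bar{X}_{t}|^{2}\Big]\leq C\varepsilon^{2}\to0,
\]
since the only exogenous discrepancy is the $O(\varepsilon)$ control perturbation $\varepsilon\delta\phi_{i}(\bar{X}_{t_{i}})$ on $[t_{i},t_{i+1}]$ (see $(\ref{5.1})$--$(\ref{5.2})$), which then propagates through the Lipschitz feedbacks. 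In particular $X^{i,\varepsilon}\to\bar{X}$ in $L^{2}$, uniformly in $t$. By the same estimate applied to the linear equations $(\ref{variation equation})$--$(\ref{variation equation2})$, whose coefficients are bounded and whose inhomogeneous terms $b_{u}^{i}\delta\phi_{i}$, $\sigma_{u}^{i}\delta\phi_{i}$ and $\bar{\phi}_{j,x}\hat{X}_{t_{j}}^{i}$ are square integrable, I also obtain $\mathbb{E}[\sup_{0\leq t\leq T}|\hat{X}_{t}^{i}|^{2}]<\infty$.

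Next I would derive the dynamics of $\tilde{X}^{i,\varepsilon}$ on each block. On $[0,t_{i}]$ it is identically $0$, the base of the induction. On $[t_{i},t_{i+1}]$ I would subtract the variational equation $(\ref{variation equation})$ from $\varepsilon^{-1}(X^{i,\varepsilon}-\bar{X})$ and use the integral (mean value) form of Taylor's formula, writing, e.g.,
\[
b(t,X_{t}^{i,\varepsilon},\phi_{i}^{\varepsilon})-b(t,\bar{X}_{t},\bar{\phi}_{i})=b_{x}^{\theta,\varepsilon}(t)\,(X_{t}^{i,\varepsilon}-\bar{X}_{t})+b_{u}^{\theta,\varepsilon}(t)\,\varepsilon\delta\phi_{i}(\bar{X}_{t_{i}}),
\]
where $b_{x}^{\theta,\varepsilon},b_{u}^{\theta,\varepsilon}$ are the derivatives averaged along the segment joining the two arguments. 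Since $\varepsilon^{-1}(X^{i,\varepsilon}-\bar{X})=\tilde{X}^{i,\varepsilon}+\hat{X}^{i}$, this yields a linear SDE
\[
d\tilde{X}_{t}^{i,\varepsilon}=\big[b_{x}^{\theta,\varepsilon}(t)\tilde{X}_{t}^{i,\varepsilon}+A_{t}^{\varepsilon}\big]dt+\big[\sigma_{x}^{\theta,\varepsilon}(t)\tilde{X}_{t}^{i,\varepsilon}+B_{t}^{\varepsilon}\big]dW_{t},\quad\tilde{X}_{t_{i}}^{i,\varepsilon}=0,
\]
with source $A_{t}^{\varepsilon}=[b_{x}^{\theta,\varepsilon}(t)-b_{x}^{i}(t)]\hat{X}_{t}^{i}+[b_{u}^{\theta,\varepsilon}(t)-b_{u}^{i}(t)]\delta\phi_{i}(\bar{X}_{t_{i}})$ and an analogous $B_{t}^{\varepsilon}$. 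On a later block $[t_{j},t_{j+1}]$ the same computation applies, except that the control discrepancy is now $\bar{\phi}_{j}(X_{t_{j}}^{i,\varepsilon})-\bar{\phi}_{j}(\bar{X}_{t_{j}})$, which I would Taylor expand in the state to produce the feedback term $\bar{\phi}_{j,x}(\bar{X}_{t_{j}})\hat{X}_{t_{j}}^{i}$ matching $(\ref{variation equation2})$, plus a source involving $\tilde{X}_{t_{j}}^{i,\varepsilon}$ and coefficient differences.

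I would then apply a standard a priori $L^{2}$-estimate for this linear SDE on each block (of the same type as Lemma \ref{lemma2}): since $b_{x}^{\theta,\varepsilon},\sigma_{x}^{\theta,\varepsilon}$ are uniformly bounded,
\[
\sup_{t\in[t_{j},t_{j+1}]}\mathbb{E}[|\tilde{X}_{t}^{i,\varepsilon}|^{2}]\leq C\Big(\mathbb{E}[|\tilde{X}_{t_{j}}^{i,\varepsilon}|^{2}]+\int_{t_{j}}^{t_{j+1}}\mathbb{E}[|A_{t}^{\varepsilon}|^{2}+|B_{t}^{\varepsilon}|^{2}]\,dt\Big).
\]
The main work, and the main obstacle, is showing the source integral tends to $0$. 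Here I would use the a priori convergence $X^{i,\varepsilon}\to\bar{X}$ in $L^{2}$ (hence a.s.\ along a further subsequence) together with $\phi_{i}^{\varepsilon}\to\bar{\phi}_{i}$ and the continuity of $b_{x},b_{u},\sigma_{x},\sigma_{u},\bar{\phi}_{j,x}$ from $(A1)$ to get $b_{x}^{\theta,\varepsilon}(t)\to b_{x}^{i}(t)$ a.s., and likewise for the others; the products such as $[b_{x}^{\theta,\varepsilon}(t)-b_{x}^{i}(t)]\hat{X}_{t}^{i}$ are dominated by $2\|b_{x}\|_{\infty}|\hat{X}_{t}^{i}|\in L^{2}$, so dominated convergence yields $\int\mathbb{E}[|A_{t}^{\varepsilon}|^{2}+|B_{t}^{\varepsilon}|^{2}]\,dt\to0$ (an elementary subsequence argument then upgrades this to the full limit $\varepsilon\downarrow0$). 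Feeding this into the per-block estimate and inducting from $[0,t_{i}]$, where $\tilde{X}\equiv0$ so $\mathbb{E}[|\tilde{X}_{t_{i}}^{i,\varepsilon}|^{2}]=0$, up through $[t_{N-1},T]$ — finitely many steps, each preserving the convergence $\mathbb{E}[|\tilde{X}_{t_{j}}^{i,\varepsilon}|^{2}]\to0$ of the block's initial value — gives $\sup_{t\in[0,T]}\mathbb{E}[|\tilde{X}_{t}^{i,\varepsilon}|^{2}]\to0$, which is $(\ref{I})$.
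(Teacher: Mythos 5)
Your proposal is correct, but it is worth noting what the paper actually does here: it gives no proof at all, deferring entirely to Lemma 4.1 of Bensoussan's lecture notes \cite{B1982I}, which treats the classical convex-variation setting. Your argument is a complete writeup of exactly that classical scheme (integral-form Taylor expansion of the coefficients, a linear SDE for $\tilde{X}^{i,\varepsilon}$ with a vanishing source, a Gronwall-type $L^{2}$ estimate, and dominated convergence in $dt\times dP$ with the subsequence upgrade), and, importantly, it supplies the one ingredient the bare citation does not literally cover: in this paper the perturbation at block $i$ propagates to later blocks through the feedback maps $\bar{\phi}_{j}(X_{t_{j}}^{i,\varepsilon})$, so the coefficient difference on $[t_{j},t_{j+1}]$, $j>i$, is not an exogenous $O(\varepsilon)$ control shift but a state-dependent one; your Taylor expansion of $\bar{\phi}_{j}$ at $\bar{X}_{t_{j}}$, producing the $\bar{\phi}_{j,x}(\bar{X}_{t_{j}})\hat{X}_{t_{j}}^{i}$ term of $(\ref{variation equation2})$ plus a source controlled by $\mathbb{E}[|\tilde{X}_{t_{j}}^{i,\varepsilon}|^{2}]$, together with the finite blockwise induction from the base case $\tilde{X}^{i,\varepsilon}\equiv 0$ on $[0,t_{i}]$, is precisely the adaptation needed. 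The small details you rely on all check out under $(A1)$ and the definition of $\mathcal{U}^{N}[0,T]$: convexity of $U$ makes the $u$-segments in the mean-value form admissible, $\bar{\phi}_{j}\in C_{b}^{1}$ gives the Lipschitz propagation yielding $\mathbb{E}[\sup_{t}|X_{t}^{i,\varepsilon}-\bar{X}_{t}|^{2}]\leq C\varepsilon^{2}$, and $\delta\phi_{i}(\bar{X}_{t_{i}})\in L^{2}$ follows from linear growth of $\delta\phi_{i}$ and the moment bound for $\bar{X}$ (cf.\ the remark after Lemma \ref{SDE_Estimate}). In short: same approach as the cited source, but your version is the one that actually matches the piecewise-feedback structure of this paper, and it would be a legitimate replacement for the citation.
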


\begin{proof}
For the proof of lemma, one can refer to Lemma 4.1 of \cite{B1982I}.
\end{proof}

\begin{lemma}
\label{relation lemma}Suppose that the conditions in Theorem \ref{theorem1}
hold. The value function \
\[
V^{i}(x) :=\mathbb{E}\left[  \int_{t_{i}}^{t_{i+1}}f\left(  t,\bar{X}%
_{t}^{t_{i},x},\bar{\phi}_{i}(x) \right)  dt+V^{i+1}(\bar{X}_{t_{i+1}}%
^{t_{i},x})\right]  ,\text{ }x\in \mathbb{R}^{n}\text{, }i=N-1,\ldots,1,0,
\]
with $V^{N}(x) =h(x) $, where $\bar{X}_{\cdot}^{t_{i},x}$ is the solution of
$\left(  \ref{4}\right)  $ starting from $\left(  t_{i},x\right)  $, and
$(\bar{P}_{\cdot}^{t_{i},x},\bar{Q}_{\cdot}^{t_{i},x})$ is the solution of
$\left(  \ref{adjoint equation}\right)  $ related to $\bar{X}_{\cdot}%
^{t_{i},x}$. Assume $V_{x}^{i+1}(x) =\bar{P}_{t_{i+1}}^{t_{i+1},x}$, for some
integer $0\leq i\leq N-1$. Then $V_{x}^{i}(x) =\bar{P}_{t_{i}}^{t_{i},x}$ if
and only if
\begin{equation}
\mathbb{E}\left[  \int_{t_{i}}^{t_{i+1}}\left(  \bar{\phi}_{i,x}\left(
x\right)  \right)  ^{\top}H_{u}\left(  t,\bar{X}_{t}^{t_{i},x},\bar{P}%
_{t}^{t_{i},x},\bar{Q}_{t}^{t_{i},x},\bar{\phi}_{i}(x) \right)  dt\right]  =0.
\label{18}%
\end{equation}

\begin{proof}
For any integer $i$, we define
\[%
\begin{array}
[c]{ll}%
f_{x}^{i}(t)=f_{x}\left(  t,\bar{X}_{t}^{t_{i},x},\bar{\phi}_{i}(\bar
{X}_{t_{i}}^{t_{i},x})\right)  , & f_{u}^{i}(t)=f_{u}\left(  t,\bar{X}%
_{t}^{t_{i},x},\bar{\phi}_{i}(\bar{X}_{t_{i}}^{t_{i},x})\right)  ,
\end{array}
\]
where $f_{x}^{i}(\cdot)=\left(  f_{x_{1}}^{i}\left(  \cdot \right)
,\ldots,f_{x_{n}}^{i}(\cdot)\right)  ^{\top}$ and $f_{u}^{i}=\left(  f_{u_{1}%
}^{i}(\cdot),\ldots,f_{u_{m}}^{i}(\cdot)\right)  ^{\top}$. By the classical
variational method, one can check that
\begin{equation}
V_{x}^{i}(x)=\mathbb{E}\left[  \int_{t_{i}}^{t_{i+1}}\left(  (\check{X}%
_{t}^{t_{i},x})^{\top}f_{x}^{i}(t)+(\bar{\phi}_{i,x}\left(  x\right)  )^{\top
}f_{u}^{i}(t)\right)  dt+(\check{X}_{t_{i+1}}^{t_{i},x})^{\top}V_{x}%
^{i+1}(\bar{X}_{t_{i+1}}^{t_{i},x})\right]  ,\text{ }x\in \mathbb{R}%
^{n}\text{,} \label{V'x}%
\end{equation}
where%
\[
\left \{
\begin{array}
[c]{l}%
d\check{X}_{t}^{t_{i},x}=\left[  b_{x}^{i}(t)\check{X}_{t}^{t_{i},x}+b_{u}%
^{i}(t)\bar{\phi}_{i,x}(x)\right]  dt\\
\text{ \  \  \  \  \  \  \  \  \  \  \  \  \  \  \  \  \  \  \  \  \ }+\left[  \sigma_{x}%
^{i}(t)\check{X}_{t}^{t_{i},x}+\sigma_{u}^{i}(t)\bar{\phi}_{i,x}(x)\right]
dW_{t},\\
\check{X}_{t_{i}}^{t_{i},x}=I,\text{\  \  \ }t\in \lbrack t_{i},t_{i+1}].
\end{array}
\right.
\]
Applying It\^{o}'s formula to\ $(\check{X}_{t}^{t_{i},x})^{\top}\bar{P}%
_{t}^{t_{i},x}$ on $\left[  t_{i},t_{i+1}\right]  $, we have%
\[
\mathbb{E}\left[  (\check{X}_{t_{i+1}}^{t_{i},x})^{\top}\bar{P}_{t_{i+1}%
}^{t_{i},x}-I\bar{P}_{t_{i}}^{t_{i},x}\right]  =\mathbb{E}\left[  \int_{t_{i}%
}^{t_{i+1}}\left[  (\bar{\phi}_{i,x}(x))^{\top}\left(  (b_{u}^{i}(t))^{\top
}\bar{P}_{t}^{t_{i},x}+(\sigma_{u}^{i}(t))^{\top}\bar{Q}_{t}^{t_{i},x}\right)
-(\check{X}_{t}^{t_{i},x})^{\top}f_{x}^{i}(t)\right]  dt\right]  ,
\]
which implies%
\begin{equation}
\bar{P}_{t_{i}}^{t_{i},x}=\mathbb{E}\left[  (\check{X}_{t_{i+1}}^{t_{i}%
,x})^{\top}\bar{P}_{t_{i+1}}^{t_{i},x}+\int_{t_{i}}^{t_{i+1}}\left[
(\check{X}_{t}^{t_{i},x})^{\top}f_{x}^{i}(t)-(\bar{\phi}_{i,x}\left(
x\right)  )^{\top}\left(  (b_{u}^{i}(t))^{\top}\bar{P}_{t}^{t_{i},x}%
+(\sigma_{u}^{i}(t))^{\top}\bar{Q}_{t}^{t_{i},x}\right)  \right]  dt\right]  .
\label{P_N-1}%
\end{equation}
Notice that
\begin{equation}
V_{x}^{i+1}(\bar{X}_{t_{i+1}}^{t_{i},x})=\bar{P}_{t_{i+1}}^{t_{i+1},\bar
{X}_{t_{i+1}}^{t_{i},x}}=\bar{P}_{t_{i+1}}^{t_{i},x}. \label{9}%
\end{equation}
Combining $\left(  \ref{V'x}\right)  $, $\left(  \ref{P_N-1}\right)  $ and
$\left(  \ref{9}\right)  $, the proof is complete.
\end{proof}
\end{lemma}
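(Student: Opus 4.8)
The plan is to derive two separate expressions---one for the spatial gradient $V^i_x(x)$ of the value function and one for the adjoint value $\bar P^{t_i,x}_{t_i}$---and then subtract them so that everything cancels except the integral appearing in $(\ref{18})$. I would begin by computing $V^i_x(x)$ through a first-variation argument applied to the recursion defining $V^i$. The subtle point is that the initial datum $x$ enters the right-hand side in two distinct ways on $[t_i,t_{i+1}]$: through the initial condition of the state flow $\bar X^{t_i,x}_\cdot$, and directly through the frozen feedback value $\bar\phi_i(x)$. Introducing the first-variation process $\check X^{t_i,x}_\cdot:=\partial_x\bar X^{t_i,x}_\cdot$, assumption $(A1)$ and $\bar\phi_i\in C^1_b$ ensure that $\check X$ solves a linear matrix SDE started from the identity, with drift $b^i_x(t)\check X+b^i_u(t)\bar\phi_{i,x}(x)$ and diffusion $\sigma^i_x(t)\check X+\sigma^i_u(t)\bar\phi_{i,x}(x)$; the inhomogeneous terms carrying $\bar\phi_{i,x}(x)$ are exactly the trace of the control's dependence on $x$. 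Differentiating under the expectation and using the chain rule then expresses $V^i_x(x)$ as the expectation of a running integral of $(\check X_t)^\top f_x+(\bar\phi_{i,x}(x))^\top f_u$ plus the terminal term $(\check X_{t_{i+1}})^\top V^{i+1}_x(\bar X^{t_i,x}_{t_{i+1}})$, the derivatives of $f$ being evaluated along $(\bar X^{t_i,x}_t,\bar\phi_i(x))$.

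Next I would apply It\^o's formula to the pairing $(\check X^{t_i,x}_t)^\top\bar P^{t_i,x}_t$ on $[t_i,t_{i+1}]$, using the adjoint dynamics $-d\bar P_t=H_x\,dt-\bar Q_t\,dW_t$ with $H_x=(b^i_x)^\top\bar P+(\sigma^i_x)^\top\bar Q+f_x$. The terms involving $b^i_x$ and $\sigma^i_x$ cancel against the matching contributions from the $\check X$-dynamics and the It\^o cross-variation, so that only $(\bar\phi_{i,x}(x))^\top[(b^i_u)^\top\bar P+(\sigma^i_u)^\top\bar Q]-(\check X_t)^\top f_x$ survives in the $dt$-integrand. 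Taking expectations annihilates the stochastic integral, and since $\check X_{t_i}=I$ and $\bar P^{t_i,x}_{t_i}$ is deterministic, this yields a duality identity expressing $\bar P^{t_i,x}_{t_i}$ as the expectation of $(\check X_{t_{i+1}})^\top\bar P_{t_{i+1}}$ plus the running integral of $(\check X_t)^\top f_x-(\bar\phi_{i,x}(x))^\top[(b^i_u)^\top\bar P+(\sigma^i_u)^\top\bar Q]$.

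Then I would invoke the induction hypothesis $V^{i+1}_x(\cdot)=\bar P^{t_{i+1},\cdot}_{t_{i+1}}$ at the random endpoint $\bar X^{t_i,x}_{t_{i+1}}$; combined with the flow property of $(\ref{4})$ and uniqueness for $(\ref{adjoint equation})$, restarting the adjoint at $t_{i+1}$ from $\bar X^{t_i,x}_{t_{i+1}}$ reproduces the original adjoint, which gives $V^{i+1}_x(\bar X^{t_i,x}_{t_{i+1}})=\bar P^{t_i,x}_{t_{i+1}}$. Substituting this into the gradient formula and subtracting the duality identity, the terminal terms $(\check X_{t_{i+1}})^\top\bar P_{t_{i+1}}$ and the running $(\check X_t)^\top f_x$ terms cancel, leaving
\[
V^i_x(x)-\bar P^{t_i,x}_{t_i}=\mathbb{E}\left[\int_{t_i}^{t_{i+1}}\left(\bar\phi_{i,x}(x)\right)^\top\left\{f_u+(b^i_u)^\top\bar P^{t_i,x}_t+(\sigma^i_u)^\top\bar Q^{t_i,x}_t\right\}dt\right].
\]
Recognizing the brace as $H_u(t,\bar X^{t_i,x}_t,\bar P^{t_i,x}_t,\bar Q^{t_i,x}_t,\bar\phi_i(x))$, the right-hand side vanishes precisely when $(\ref{18})$ holds, which is exactly the claimed equivalence.

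I expect the main obstacle to be the rigorous justification of the first variation in the opening step: showing that $V^i$ is continuously differentiable with the asserted gradient requires legitimizing the interchange of differentiation with expectation and time-integration, deriving the linear SDE for $\check X$, and controlling $\check X$ together with the difference quotients via the standard SDE estimate of Lemma $\ref{lemma2}$. All of this leans on the uniform boundedness of $b_x,b_u,\sigma_x,\sigma_u,f_x,f_u$ and $\bar\phi_{i,x}$ from $(A1)$ and on $\bar\phi_i\in C^1_b$. A secondary but conceptually important point is the consistency identity for the adjoint used in the third step, where one must check that both the forward state and the backward adjoint genuinely respect the semigroup/flow structure so that the inductive step closes.
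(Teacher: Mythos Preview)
Your proposal is correct and follows essentially the same route as the paper: compute $V^i_x$ via the first-variation process $\check X$ (the same linear matrix SDE started at $I$), derive the duality identity for $\bar P^{t_i,x}_{t_i}$ by applying It\^o's formula to $(\check X_t)^\top\bar P_t$, use the hypothesis together with the flow property to replace $V^{i+1}_x(\bar X^{t_i,x}_{t_{i+1}})$ by $\bar P^{t_i,x}_{t_{i+1}}$, and subtract. The paper's proof is terser---it simply says ``by the classical variational method'' for the first step and ``combining'' for the conclusion---but the content and order of the argument are the same as yours.
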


\begin{proof}
[Proof of Theorem \ref{theorem1}]For simplicity of presentation, in the
following of this proof we only consider the case $n=m=d=1$. This method is
still applicable to the multi-dimensional case. For convenience, we use the
following notations:%
\[%
\begin{array}
[c]{l}%
\displaystyle V_{x}^{i}\left(  t,\lambda \right)  =V_{x}^{i}\left(  \bar{X}%
_{t}+\lambda(X_{t}^{i-1,\varepsilon}-\bar{X}_{t})\right)  ,\\
\displaystyle f_{x}^{i}\left(  t,\lambda \right)  =f_{x}\left(  t,\bar{X}%
_{t}+\lambda(X_{t}^{i,\varepsilon}-\bar{X}_{t}),\bar{\phi}_{i}(\bar{X}_{t_{i}%
})+\lambda \left(  \phi_{i}^{\varepsilon}(\bar{X}_{t_{i}})-\bar{\phi}_{i}%
(\bar{X}_{t_{i}})\right)  \right)  ,\\
\displaystyle f_{u}^{i}\left(  t,\lambda \right)  =f_{u}\left(  t,\bar{X}%
_{t}+\lambda(X_{t}^{i,\varepsilon}-\bar{X}_{t}),\bar{\phi}_{i}(\bar{X}_{t_{i}%
})+\lambda \left(  \phi_{i}^{\varepsilon}(\bar{X}_{t_{i}})-\bar{\phi}_{i}%
(\bar{X}_{t_{i}})\right)  \right)  .
\end{array}
\]

First, we consider $u_{\cdot}^{N-1,\varepsilon}$\ on $\left[  t_{N-1}%
,T\right]  $. Since $V^{N}(x)=h(x)$, $x\in \mathbb{R}$, by Taylor's expansion,
we have%
\begin{align}
&  J(u^{N-1,\varepsilon})-J(\bar{u})\nonumber \\
&  =\mathbb{E}\left[  \int_{t_{N-1}}^{T}\left[  f\left(  t,X_{t}%
^{N-1,\varepsilon},\phi_{N-1}^{\varepsilon}(\bar{X}_{t_{N-1}})\right)
-f\left(  t,\bar{X}_{t},\bar{\phi}_{N-1}(\bar{X}_{t_{N-1}})\right)  \right]
dt\right] \nonumber \\
&  \text{ \  \ }+\mathbb{E}\left[  V^{N}(X_{T}^{N-1,\varepsilon})-V^{N}(\bar
{X}_{T})\right] \nonumber \\
&  =\mathbb{E}\left[  \int_{t_{N-1}}^{T}\int_{0}^{1}f_{x}^{N-1}\left(
t,\lambda \right)  (X_{t}^{N-1,\varepsilon}-\bar{X}_{t})d\lambda dt\right.
\label{5}\\
&  \text{ \  \ }+\left.  \int_{t_{N-1}}^{T}\int_{0}^{1}f_{u}^{N-1}\left(
t,\lambda \right)  \left[  \phi_{N-1}^{\varepsilon}(\bar{X}_{t_{N-1}}%
)-\bar{\phi}_{N-1}(\bar{X}_{t_{N-1}})\right]  d\lambda dt\right] \nonumber \\
&  \text{ \  \ }+\mathbb{E}\left[  \int_{0}^{1}V_{x}^{N}\left(  T,\lambda
\right)  (X_{T}^{N-1,\varepsilon}-\bar{X}_{T})d\lambda \right]  .\nonumber
\end{align}
Applying It\^{o}'s formula to $\bar{P}_{t}\hat{X}_{t}^{N-1}$ on $\left[
t_{N-1},T\right]  $ and noting that $\bar{P}_{T}=V_{x}^{N}(\bar{X}_{T})$ and
$\hat{X}_{t_{N-1}}^{N-1}=0$, we have%
\begin{equation}
\mathbb{E}\left[  V_{x}^{N}(\bar{X}_{T})\hat{X}_{T}^{N-1}\right]
=\int_{t_{N-1}}^{T}\mathbb{E}\left[  \left(  \bar{P}_{t}b_{u}^{N-1}\left(
t\right)  +\bar{Q}_{t}\sigma_{u}^{N-1}(t)\right)  \delta \phi_{N-1}(\bar
{X}_{t_{N-1}})-f_{x}^{N-1}(t)\hat{X}_{t}^{N-1}\right]  dt. \label{6}%
\end{equation}
Since $\bar{u}_{\cdot}$ is the discrete optimal control, by Lemma
\ref{lemma1}, $\left(  \ref{5}\right)  $ and $\left(  \ref{6}\right)  $, for
any $\phi_{N-1}(\cdot)\in C_{b}^{1}\left(  \mathbb{R};U\right)  $, we obtain
\begin{align}
&  0\leq \lim_{\varepsilon \downarrow0}\frac{J(u^{N-1,\varepsilon})-J(\bar{u}%
)}{\varepsilon}\nonumber \\
&  =\mathbb{E}\left[  \int_{t_{N-1}}^{T}\left[  f_{x}^{N-1}(t)\hat{X}%
_{t}^{N-1}+f_{u}^{N-1}(t)\delta \phi_{N-1}(\bar{X}_{t_{N-1}})\right]
dt+V_{x}^{N}(\bar{X}_{T})\hat{X}_{T}^{N-1}\right] \label{7}\\
&  =\mathbb{E}\left[  \int_{t_{N-1}}^{T}H_{u}\left(  t,\bar{X}_{t},\bar{P}%
_{t},\bar{Q}_{t},\bar{\phi}_{N-1}(\bar{X}_{t_{N-1}})\right)  \delta \phi
_{N-1}(\bar{X}_{t_{N-1}})dt\right]  .\nonumber
\end{align}
Furthermore, if $\bar{\phi}_{N-1}(x)$ is an interior point of $U$, for
$x\in \mathbb{R}$, then $\delta \phi_{N-1}(x)\ $can\ be positive or negative,
which implies
\begin{equation}
\mathbb{E}\left[  \left.  \int_{t_{N-1}}^{T}H_{u}\left(  t,\bar{X}_{t},\bar
{P}_{t},\bar{Q}_{t},\bar{\phi}_{N-1}(x)\right)  dt\right \vert \bar{X}%
_{t_{N-1}}=x\right]  =0,\ P_{\bar{X}_{t_{N-1}}}\text{-a.s.} \label{8}%
\end{equation}
$\  \  \  \  \  \ $Second, based on the preceding discussion, we consider
$u_{\cdot}^{N-2,\varepsilon}$ on $\left[  t_{N-2},T\right]  $. By the
definition of $V^{N-1}\left(  \cdot \right)  \ $and Taylor's expansion, we have%
\begin{align*}
&  J(u^{N-2,\varepsilon})-J(\bar{u})\\
&  =\mathbb{E}\left[  \int_{t_{N-2}}^{t_{N-1}}\left[  f\left(  t,X_{t}%
^{N-2,\varepsilon},\phi_{N-2}^{\varepsilon}(\bar{X}_{t_{N-2}})\right)
-f\left(  t,\bar{X}_{t},\bar{\phi}_{N-2}(\bar{X}_{t_{N-2}})\right)  \right]
dt\right] \\
&  \text{ \  \ }+\mathbb{E}\left[  V^{N-1}(X_{t_{N-1}}^{N-2,\varepsilon
})-V^{N-1}(\bar{X}_{t_{N-1}})\right] \\
&  =\mathbb{E}\left[  \int_{t_{N-2}}^{t_{N-1}}\int_{0}^{1}f_{x}^{N-2}\left(
t,\lambda \right)  (X_{t}^{N-2,\varepsilon}-\bar{X}_{t})d\lambda dt\right. \\
&  \text{ \  \ }+\left.  \int_{t_{N-2}}^{t_{N-1}}\int_{0}^{1}f_{u}^{N-2}\left(
t,\lambda \right)  \left[  \phi_{N-2}^{\varepsilon}(\bar{X}_{t_{N-2}}%
)-\bar{\phi}_{N-2}(\bar{X}_{t_{N-2}})\right]  d\lambda dt\right] \\
&  \text{ \  \ }+\mathbb{E}\left[  \int_{0}^{1}V_{x}^{N-1}\left(
t_{N-1},\lambda \right)  (X_{t_{N-1}}^{N-2,\varepsilon}-\bar{X}_{t_{N-1}%
})d\lambda \right]  .
\end{align*}
By Lemma \ref{lemma1}, it yields that%
\begin{align}
&  \text{ }0\leq \lim_{\varepsilon \downarrow0}\frac{J(u^{N-2,\varepsilon
})-J(\bar{u})}{\varepsilon}\nonumber \\
&  =\mathbb{E}\left[  V_{x}^{N-1}(\bar{X}_{t_{N-1}})\hat{X}_{t_{N-1}}%
^{N-2}+\int_{t_{N-2}}^{t_{N-1}}\left[  f_{x}^{N-2}(t)\hat{X}_{t}^{N-2}%
+f_{u}^{N-2}(t)\delta \phi_{N-2}(\bar{X}_{t_{N-2}})\right]  dt\right]  .
\label{11}%
\end{align}
From $\left(  \ref{8}\right)  $, we know
\[
\mathbb{E}\left[  \int_{t_{N-1}}^{T}H_{u}\left(  t,\bar{X}_{t}^{t_{N-1}%
,x},\bar{P}_{t}^{t_{N-1},x},\bar{Q}_{t}^{t_{N-1},x},\bar{\phi}_{N-1}%
(x)\right)  dt\left(  \phi_{N-1}(x)-\bar{\phi}_{N-1}(x)\right)  \right]
\geq0,
\]
for any $\phi_{N-1}(\cdot)\in C_{b}^{1}(\mathbb{R};U)$. If $\bar{\phi}%
_{N-1}(x)$ is an interior point of $U$,
\[
\mathbb{E}\left[  \int_{t_{N-1}}^{T}H_{u}\left(  t,\bar{X}_{t}^{t_{N-1}%
,x},\bar{P}_{t}^{t_{N-1},x},\bar{Q}_{t}^{t_{N-1},x},\bar{\phi}_{N-1}%
(x)\right)  dt\right]  =0,
\]
otherwise if $\bar{\phi}_{N-1}(x)$ is a boundary point of $U$, $x$ is an
extreme point of $\bar{\phi}_{N-1}$ and $\bar{\phi}_{N-1,x}(x)=0$. Thus
\begin{equation}
\mathbb{E}\left[  \int_{t_{N-1}}^{T}H_{u}\left(  t,\bar{X}_{t}^{t_{N-1}%
,x},\bar{P}_{t}^{t_{N-1},x},\bar{Q}_{t}^{t_{N-1},x},\bar{\phi}_{N-1}%
(x)\right)  \bar{\phi}_{N-1,x}(x)dt\right]  =0. \label{13}%
\end{equation}
Seeing that $V_{x}^{N}(\bar{X}_{T})=\bar{P}_{T}$, by Lemma
\ref{relation lemma}, we derive $V_{x}^{N-1}(\bar{X}_{t_{N-1}})=\bar
{P}_{t_{N-1}}$. Applying It\^{o}'s formula to $\bar{P}_{t}\hat{X}_{t}^{N-2}$
on $\left[  t_{N-2},t_{N-1}\right]  $\ and noting that $\hat{X}_{t_{N-2}%
}^{N-2}=0$, we have%
\begin{equation}
\mathbb{E}\left[  V_{x}^{N-1}(\bar{X}_{t_{N-1}})\hat{X}_{t_{N-1}}%
^{N-2}\right]  =\int_{t_{N-2}}^{t_{N-1}}\mathbb{E}\left[  \left(  \bar{P}%
_{t}b_{u}^{N-2}(t)+\bar{Q}_{t}\sigma_{u}^{N-2}(t)\right)  \delta \phi
_{N-2}(\bar{X}_{t_{N-2}})-f_{x}^{N-2}(t)\hat{X}_{t}^{N-2}\right]  dt.
\label{16}%
\end{equation}
Combining $\left(  \ref{11}\right)  $ and $\left(  \ref{16}\right)  $, we
obtain
\[
\mathbb{E}\left[  \int_{t_{N-2}}^{t_{N-1}}H_{u}\left(  t,\bar{X}_{t},\bar
{P}_{t},\bar{Q}_{t},\bar{\phi}_{N-2}(\bar{X}_{t_{N-2}})\right)  \delta
\phi_{N-2}(\bar{X}_{t_{N-2}})dt\right]  \geq0,\text{ \ }\forall \phi
_{N-2}(\cdot)\in C_{b}^{1}\left(  \mathbb{R};U\right)  .
\]
Furthermore, if $\bar{\phi}_{N-2}(x)$ is an interior point of $U$, for
$x\in \mathbb{R}$, then
\[
\mathbb{E}\left[  \left.  \int_{t_{N-2}}^{t_{N-1}}H_{u}\left(  t,\bar{X}%
_{t},\bar{P}_{t},\bar{Q}_{t},\bar{\phi}_{N-2}(x)\right)  dt\right \vert \bar
{X}_{t_{N-2}}=x\right]  =0,\text{ \ }P_{\bar{X}_{t_{N-2}}}\text{-a.s.}%
\]
Similarly, consider the above process on $\left[  t_{N-3},T\right]  ,\left[
t_{N-4},T\right]  ,\ldots,\left[  t_{0},T\right]  $, and then our conclusion follows.
\end{proof}

To sum up, for $i=N-1,\ldots,1,0$, the discrete optimal control $\bar{u}%
_{t}=\bar{\phi}_{i}(\bar{X}_{t_{i}})$ on $t\in \lbrack t_{i},t_{i+1}]$ can be
determined by the following discrete Hamiltonian system:
\begin{align}
&  \bar{X}_{t}=\bar{X}_{t_{i}}+\int_{t_{i}}^{t}b\left(  s,\bar{X}_{s}%
,\bar{\phi}_{i}(\bar{X}_{t_{i}})\right)  ds+\int_{t_{i}}^{t}\sigma \left(
s,\bar{X}_{s},\bar{\phi}_{i}(\bar{X}_{t_{i}})\right)  dW_{s},\label{10.1}\\
&  \bar{P}_{t}=\bar{P}_{t_{i+1}}+\int_{t}^{t_{i+1}}H_{x}\left(  s,\bar{X}%
_{s},\bar{P}_{s},\bar{Q}_{s},\bar{\phi}_{i}(\bar{X}_{t_{i}})\right)
ds-\int_{t}^{t_{i+1}}\bar{Q}_{s}dW_{s},\label{10.2}\\
&  \mathbb{E}\left[  \left \langle \int_{t_{i}}^{t_{i+1}}H_{u}\left(  t,\bar
{X}_{t},\bar{P}_{t},\bar{Q}_{t},\bar{\phi}_{i}(\bar{X}_{t_{i}})\right)
dt,\phi_{i}(\bar{X}_{t_{i}})-\bar{\phi}_{i}(\bar{X}_{t_{i}})\right \rangle
\right]  \geq0,\  \forall \phi_{i}( \cdot) \in C_{b}^{1}(\mathbb{R}^{n};U),
\label{10.3}%
\end{align}
where $\bar{X}_{0}=x_{0}$ and $\bar{P}_{T}=h_{x}(\bar{X}_{T})$.

\subsection{Numerical approach for SOCPs}

The discrete SMP provides a necessary condition for solving the discrete
optimal control. The primary challenge in applying the discrete SMP to solve
the discrete optimal control $\bar{u}_{t}$ is to obtain $H_{u}(t,\bar{X}%
_{t},\bar{P}_{t},\bar{Q}_{t},\bar{u}_{t})$, which is described by the solution
$(\bar{X}_{t},\bar{P}_{t},\bar{Q}_{t})$ of the FBSDEs $\left(  \ref{10.1}%
\right)  -\left(  \ref{10.2}\right)  $. Let $P_{i}^{N}\left(  x\right)  $,
$Q_{i}^{N}\left(  x\right)  $ and $\phi_{i}^{N}\left(  x\right)  $ be the
discrete approximations of $\bar{P}\left(  t_{i},x\right)  $, $\bar{Q}\left(
t_{i},x\right)  $ and $\bar{\phi}_{i}\left(  x\right)  $ respectively. Based
on the approximation of the discrete SMP $\left(  \ref{10.3}\right)  $ and the
FBSDEs $\left(  \ref{10.1}\right)  -\left(  \ref{10.2}\right)  $, we propose
the following numerical scheme for the discrete Hamiltonian system $\left(
\ref{10.1}\right)  -\left(  \ref{10.3}\right)  $.
%
%The discrete SMP provides a necessary condition for solving the discrete
%optimal control $\bar{u}_{t}$. By the discrete Hamiltonian system $\left(
%\ref{10}\right)  $, we found that the numerical approximation for $\bar{u}%
%_{t}$ is the key to solve the entire system, which can be seen as a coupled
%FBSDE system. Numerical methods for FBSDEs have been a hot topic recently (see
%\cite{B1997,ET2018,GLW2005,MT2006,ZhCP2006,ZhFZh2014} and the references
%therein). In this paper, we employ the Euler scheme in \cite{ZhCP2006}\ and
%\cite{ZhFZh2014}.
%In the following, we propose a numerical sheme for the discrete Hamiltonian
%system $\left(  \ref{10}\right)  $, which is based on the approximation of
%discrete SMP.
%Let $\left(  X_{i},\phi_{i}^{N}\left(  X_{i}\right)
%,P_{i},Q_{i}\right)  $ be the numerical solution to the discrete optimal
%4-tuple $\left(  \bar{X}_{t},\bar{u}_{t},\bar{P}_{t},\bar{Q}_{t}\right)  $ at
%$t=t_{i}$.

\begin{sch}
\label{Scheme1}Assume that $X_{0}^{N}$ and $P_{N}^{N}$ are known. For
$i=N-1,\ldots,1,0$, solve $P_{i}^{N}(x),Q_{i}^{N}(x)$ and $\phi_{i}^{N}(x)$
with $x\in \mathbb{R}^{n}$ by%
\begin{align}
&  X_{i+1}^{N}=x+b\left(  t_{i},x,\phi_{i}^{N}(x)\right)  \Delta
t+\sigma \left(  t_{i},x,\phi_{i}^{N}(x)\right)  \Delta W_{t_{i+1}}%
,\label{X^N}\\
&  Q_{i}^{N}(x)=\mathbb{E}_{t_{i}}^{x}\left[  P_{i+1}^{N}\Delta W_{t_{i+1}%
}\right]  /\Delta t,\label{Q^N}\\
&  P_{i}^{N}(x)=\mathbb{E}_{t_{i}}^{x}\left[  P_{i+1}^{N}\right]
+H_{x}\left(  t_{i},x,P_{i}^{N}(x),Q_{i}^{N}(x),\phi_{i}^{N}(x)\right)  \Delta
t,\label{P^N}\\
&  \left \langle H_{u}\left(  t_{i},x,P_{i}^{N}(x),Q_{i}^{N}(x),\phi_{i}%
^{N}(x)\right)  ,\; \phi_{i}(x)-\phi_{i}^{N}(x)\right \rangle \geq
0,\text{\  \ }\forall \phi_{i}(x)\in U, \label{fai^N}%
\end{align}
where $P_{i+1}^{N}\ $is the value at space point $X_{i+1}^{N,t_{i},x}$.
\end{sch}

In the case where $\phi_{i}^{N}(x) $ is an interior point of $U$, Scheme
\ref{Scheme1} becomes Scheme \ref{Scheme2}.

\begin{sch}
\label{Scheme2}Assume that $X_{0}^{N}$ and $P_{N}^{N}$ are known. For
$i=N-1,\ldots,1,0$, solve $P_{i}^{N}(x),Q_{i}^{N}(x)$ and $\phi_{i}^{N}(x)$
with $x\in \mathbb{R}^{n}$ by%
\begin{align}
&  X_{i+1}^{N}=x+b\left(  t_{i},x,\phi_{i}^{N}(x)\right)  \Delta
t+\sigma \left(  t_{i},x,\phi_{i}^{N}(x)\right)  \Delta W_{t_{i+1}},\\
&  Q_{i}^{N}(x)=\mathbb{E}_{t_{i}}^{x}\left[  P_{i+1}^{N}\Delta W_{t_{i+1}%
}\right]  /\Delta t,\\
&  P_{i}^{N}(x)=\mathbb{E}_{t_{i}}^{x}\left[  P_{i+1}^{N}\right]
+H_{x}\left(  t_{i},x,P_{i}^{N}(x),Q_{i}^{N}(x),\phi_{i}^{N}(x)\right)  \Delta
t,\\
&  H_{u}\left(  t_{i},x,P_{i}^{N}(x),Q_{i}^{N}(x),\phi_{i}^{N}(x)\right)  =0,
\label{zero point}%
\end{align}
where $P_{i+1}^{N}\ $is the value at space point $X_{i+1}^{N,t_{i},x}$.
\end{sch}

Moreover, denote
\begin{equation}
u_{t}^{N}=\sum_{i=0}^{N-1}\phi_{i}^{N}(X_{t_{i}}^{N})I_{\left[  t_{i}%
,t_{i+1}\right)  }(t) , \label{u^N}%
\end{equation}
where the state process
\begin{equation}
\left \{
\begin{array}
[c]{l}%
dX_{t}^{N}=b\left(  t,X_{t}^{N},\phi_{i}^{N}(X_{t_{i}}^{N})\right)
dt+\sigma \left(  t,X_{t}^{N},\phi_{i}^{N}(X_{t_{i}}^{N})\right)  dW_{t},\\
X_{t_{i}}^{N}=X_{t_{i}}^{N},\  \ t\in \lbrack t_{i},t_{i+1}],\text{
}i=0,1,\ldots,N-1,
\end{array}
\right.  \label{27}%
\end{equation}
and $X_{0}^{N}=x_{0}$. Then%
\[
J\left(  u^{N}\right)  =\mathbb{E}\left[  \int_{0}^{T}f\left(  t,X_{t}%
^{N},u_{t}^{N}\right)  dt+h(X_{T}^{N})\right]  .
\]

\begin{remark}
Numerical methods for FBSDEs have been a hot topic recently (see
\cite{B1997,ET2018,GLW2005,MT2006,ZhCP2006,ZhFZh2014} and the references
therein). In this paper, we choose the Euler-type method for solving FBSDEs
proposed in \cite{ZhCP2006}\ and \cite{ZhFZh2014}. The conditional
expectations $\mathbb{E}_{t_{i}}^{x}\left[  P_{i+1}^{N}\right]  :=\mathbb{E}%
\left[  P_{i+1}^{N}|X_{t_{i}}^{N}=x\right]  $ and $\mathbb{E}_{t_{i}}%
^{x}\left[  P_{i+1}^{N}\Delta W_{t_{i+1}}\right]  :$ $=\mathbb{E}\left[
P_{i+1}^{N}\Delta W_{t_{i+1}}|X_{t_{i}}^{N}=x\right]  $ in Scheme
\ref{Scheme1} and Scheme \ref{Scheme2} are functions of Gaussian random
variables, which can be approximated by Gauss-Hermite quadrature with high accuracy.
\end{remark}

\begin{remark}
For fixed $x$, $H_{u}\left(  t_{i},x,P_{i}^{N},Q_{i}^{N},\phi_{i}%
^{N}(x)\right)  $ in $\left(  \ref{fai^N}\right)  $ and $\left(
\ref{zero point}\right)  $ is a deterministic function of the variable
$y=\phi_{i}^{N}(x)$. Many classical numerical methods can be used to solve
$\left(  \ref{fai^N}\right)  $ and $\left(  \ref{zero point}\right)  $, such
as gradient descent method, fixed-point iterative method, Newton's Method,
Bisection method and so on. We assume that $\phi_{i}^{N}(x)$ can be solved accurately.
\end{remark}

\begin{remark}
Once the control $u_{\cdot}^{N}$ is obtained, by introducing the following
equation%
\begin{equation}
Y_{t}^{N}=h\left(  X_{T}^{N}\right)  +\int_{t}^{T}f\left(  s,X_{s}^{N}%
,u_{s}^{N}\right)  ds-\int_{t}^{T}Z_{s}^{N}dW_{s}, \label{bsde}%
\end{equation}
then the cost $J\left(  u^{N}\right)  =Y_{0}^{N}$ can be obtained by solving
the FBSDEs $\left(  \ref{27}\right)  -\left(  \ref{bsde}\right)  $.
\end{remark}

\subsubsection{Summary of the discrete recursive algorithm}

To do this, we introduce the following uniform space partition $\mathcal{D}%
_{h}=\mathcal{D}_{1,h}\times \mathcal{D}_{2,h}\times \cdots \times \mathcal{D}%
_{n,h}$, where $\mathcal{D}_{j,h}$ is the partition of the one-dimensional
real axis $\mathbb{R}$
\[
\mathcal{D}_{j,h}=\left \{  \left.  x_{k}^{j}\right \vert x_{k}^{j}=kh,\text{
}k=0,\pm1,\pm2,\ldots \right \}  ,
\]
for $j=1,2,\ldots,n$ and $h$ is a suitable spatial step.

In the numerical algorithm, we employ the following iterative algorithm to
optimize control
\begin{equation}
\phi_{i}^{N,l+1}(x)=\phi_{i}^{N,l}(x)-\rho^{l}H_{u}\left(  t_{i},x,P_{i}%
^{N,l}(x),Q_{i}^{N,l}(x),\phi_{i}^{N,l}(x)\right)  ,\text{ }l=0,1,\cdots,
\label{gradient descent}%
\end{equation}
where $\rho^{l}$ is the step-size for the iteration. Now we summarize our
discrete recursive algorithm. \begin{algorithm}
\caption{Framework of the discrete recursive method}
\begin{algorithmic}[1]\label{alg1}
\STATE Set $P_{N}^{N}(x)=h_x(x)$, $x \in \mathcal{D}_h$ and the error tolerance $\varepsilon$.
\FOR{$i=N-1 \rightarrow 0$}
\FOR{each\;$x\in \mathcal{D}_h$}
\STATE Choose $\phi_{i}^{N,0}(x)\in U$ and set $l=0$.
\REPEAT
\STATE Solve $(  P_{i}^{N,l}(x), Q_{i}^{N,l}(x))  $ by $\left(
\ref{Q^N}\right)-\left(  \ref{P^N}\right)$.
\STATE {Update $\phi_{i}^{N,l+1}(x)$ by $\eqref{gradient descent}$.
%\[\phi_{i}^{N,l+1}(x)=\phi_{i}^{N,l}(x)-\rho^{l}H_{u}\left(  t_{i},x,P_{i}%
%^{N,l}(x),Q_{i}^{N,l}(x),\phi_{i}^{N,l}(x)\right),
%\]
Set $l=l+1$.}
\UNTIL{$|\phi_{i}^{N,l+1}(x)-\phi_{i}^{N,l}(x)| \leq \varepsilon$.}
\ENDFOR
\ENDFOR
\STATE Compute $u_{t}^{N}$ by $\eqref{u^N}$.
\end{algorithmic}
\end{algorithm}

Algorithm \ref{alg1} presents the procedure for our discrete recursive method.
We run the algorithm in a backward manner to obtain the values $\{ \phi
_{i}^{N}(x) \}_{i=0}^{N-1}$, $x\in \mathcal{D}_{h}$, which are the control
values in time-space mesh. Then we can compute the control value $u_{t}^{N}$
based on grid point interpolation.

\section{Convergence analysis}

We will give the convergence results of the discrete recursive method in this
section. In the following, $C$ represents a generic constant which does not
depend on the time partition and may be different from line to line. We now
give an estimate for the state process $X_{t}^{N}$.

\begin{lemma}
\label{SDE_Estimate}Suppose $\left(  A1\right)  \ $holds. We also assume $\{
\phi_{i}^{N}\}_{i=1}^{N-1}\in C_{b}^{1}$, and there exists a positive constant
$L$, not depending on $N$, such that $\sup_{i}\left \vert \phi_{i}^{N}\left(
0\right)  \right \vert \leq L$. Then for $m\geq2$,
\begin{equation}
\mathbb{E}\left[  \sup_{0\leq s\leq T}\left \vert X_{s}^{N}\right \vert
^{m}\right]  \leq C\left(  1+\left \vert x_{0}\right \vert ^{m}\right)  .
\label{sde_esti}%
\end{equation}

\end{lemma}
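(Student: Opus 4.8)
The plan is to treat (\ref{27}) as a single It\^o equation on $[0,T]$ whose (adapted) coefficients happen to depend on the gridpoint values $X_{t_i}^N$, and then to run the classical moment estimate based on It\^o's formula, the Burkholder--Davis--Gundy (BDG) inequality and Gronwall's lemma. Throughout I write $\eta(s)=t_i$ for $s\in[t_i,t_{i+1})$, and abbreviate $b_s^N:=b(s,X_s^N,\phi_i^N(X_{\eta(s)}^N))$ and $\sigma_s^N:=\sigma(s,X_s^N,\phi_i^N(X_{\eta(s)}^N))$.

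First I would record a linear growth bound for the coefficients. By $(A1)$ the boundedness of $b_x,b_u,\sigma_x,\sigma_u$ makes $b,\sigma$ Lipschitz in $(x,u)$, and since $t\mapsto b(t,0,0),\sigma(t,0,0)$ are continuous on the compact interval $[0,T]$ they are bounded, so $|b(t,x,u)|+|\sigma(t,x,u)|\le C(1+|x|+|u|)$. Writing $\phi_i^N(x)=\phi_i^N(0)+\int_0^1\partial_x\phi_i^N(\theta x)\,x\,d\theta$ and using $\{\phi_i^N\}\in C_b^1$ together with $\sup_i|\phi_i^N(0)|\le L$ gives $|\phi_i^N(x)|\le L+C|x|$ with constants independent of $i$ and $N$. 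Combining these,
\[
|b_s^N|+|\sigma_s^N|\le C\bigl(1+|X_s^N|+|X_{\eta(s)}^N|\bigr),
\]
and the decisive observation is that $|X_{\eta(s)}^N|\le\sup_{0\le r\le s}|X_r^N|$, so the gridpoint term is dominated by the running supremum.

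Next I would fix $m\ge2$ and apply It\^o's formula to $|X_t^N|^m$ (or to the smooth surrogate $(1+|X_t^N|^2)^{m/2}$ to avoid the singularity at the origin). To make the moments finite before invoking Gronwall, I would first localize with the stopping times $\tau_R=\inf\{t:|X_t^N|\ge R\}$, derive a bound uniform in $R$, and pass to the limit $R\to\infty$ by Fatou's lemma. After taking $\sup_{r\le t}$ and then expectations, the drift contribution is controlled by the growth bound and Young's inequality $|X_s^N|^{m-1}|X_{\eta(s)}^N|\le C(|X_s^N|^m+|X_{\eta(s)}^N|^m)$, while the martingale part is handled by BDG,
\[
\mathbb{E}\Bigl[\sup_{r\le t}\Bigl|\int_0^r m|X_s^N|^{m-2}\langle X_s^N,\sigma_s^N\rangle\,dW_s\Bigr|\Bigr]\le C\,\mathbb{E}\Bigl[\Bigl(\int_0^t|X_s^N|^{2m-2}|\sigma_s^N|^2\,ds\Bigr)^{1/2}\Bigr].
\]
Pulling $\sup_{r\le t}|X_r^N|^{m/2}$ out of the square root and applying the $\varepsilon$-Young inequality $ab\le\varepsilon a^2+C_\varepsilon b^2$ lets me absorb the resulting $\varepsilon\,\mathbb{E}[\sup_{r\le t}|X_r^N|^m]$ into the left-hand side.

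Collecting the pieces and using $|X_{\eta(s)}^N|^m\le\sup_{r\le s}|X_r^N|^m$ yields the closed inequality
\[
\mathbb{E}\Bigl[\sup_{r\le t}|X_r^N|^m\Bigr]\le C\bigl(1+|x_0|^m\bigr)+C\int_0^t\mathbb{E}\Bigl[\sup_{r\le s}|X_r^N|^m\Bigr]\,ds,
\]
so Gronwall's lemma gives (\ref{sde_esti}) with $C$ depending only on $T,m,L$ and the Lipschitz/growth constants, hence not on $N$. I expect the only real subtlety to be the bookkeeping around the gridpoint term $X_{\eta(s)}^N$: because the piecewise control freezes the state at the left endpoint of each subinterval, the coefficients are genuinely path-dependent and Lemma \ref{lemma2} cannot be invoked directly; the remedy is precisely the domination $|X_{\eta(s)}^N|\le\sup_{r\le s}|X_r^N|$, which keeps the right-hand side in terms of the running supremum and forces all constants to be uniform in the partition $N$.
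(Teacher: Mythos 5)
Your proposal is correct and takes essentially the same route as the paper: the paper likewise rewrites $\left(\ref{27}\right)$ as a single SDE with frozen-control coefficients, bounds $|\phi_i^N(X_{t_i}^N)|\leq C(1+|X_{t_i}^N|)$ using $\{\phi_i^N\}\in C_b^1$ and $\sup_i|\phi_i^N(0)|\leq L$, dominates the gridpoint term by the running supremum, and closes with Gronwall's inequality. The only difference is one of granularity: the paper invokes the standard $L^m$ moment estimate for SDEs as a black box, whereas you rederive it by hand via It\^{o}'s formula, the BDG inequality, localization and Young's inequality, which is the same method written out in full.
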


\begin{proof}
Rewrite the state equation $\left(  \ref{27}\right)  $\ as follows:%
\[
X_{s}^{N}=x_{0}+\int_{0}^{s}\tilde{b}\left(  r,X_{r}^{N}\right)  dr+\int
_{0}^{s}\tilde{\sigma}\left(  r,X_{r}^{N}\right)  dW_{r},
\]
where\ for $r\in \left[  t_{i},t_{i+1}\right]  $ $\left(  i=0,\ldots
,N-1\right)  ,$%
\[%
\begin{array}
[c]{ll}%
\tilde{b}\left(  r,x\right)  =b\left(  r,x,\phi_{i}^{N}(X_{t_{i}}^{N})\right)
, & \tilde{\sigma}\left(  r,x\right)  =\sigma \left(  r,x,\phi_{i}^{N}%
(X_{t_{i}}^{N})\right)  .
\end{array}
\]
For $m\geq2$, by the standard estimate of SDE, one can derive that
\begin{align*}
\mathbb{E}\left[  \sup_{0\leq s\leq t}\left \vert X_{s}^{N}\right \vert
^{m}\right]   &  \leq C\left \{  \left \vert x_{0}\right \vert ^{m}%
+\mathbb{E}\left[  \int_{0}^{t}|\tilde{b}\left(  s,X_{s}^{N}\right)
|^{m}ds\right]  +\mathbb{E}\left[  \int_{0}^{t}\left \vert \tilde{\sigma
}\left(  s,X_{s}^{N}\right)  \right \vert ^{m}ds\right]  \right \} \\
&  \leq C\left \{  \left \vert x_{0}\right \vert ^{m}+\int_{0}^{t}\mathbb{E}%
\left[  |\tilde{b}\left(  s,0\right)  |^{m}+\left \vert \tilde{\sigma}\left(
s,0\right)  \right \vert ^{m}+\left \vert X_{s}^{N}\right \vert ^{m}\right]
ds\right \}  .
\end{align*}
Notice that for $s\in \left[  t_{i},t_{i+1}\right]  ,$
\begin{align*}
|\tilde{b}\left(  s,0\right)  |  &  \leq C\left(  1+\left \vert \phi_{i}%
^{N}(X_{t_{i}}^{N})\right \vert \right)  \leq C\left(  1+\left \vert X_{t_{i}%
}^{N}\right \vert \right)  ,\\
\left \vert \tilde{\sigma}\left(  s,0\right)  \right \vert  &  \leq C\left(
1+\left \vert \phi_{i}^{N}(X_{t_{i}}^{N})\right \vert \right)  \leq C\left(
1+\left \vert X_{t_{i}}^{N}\right \vert \right)  .
\end{align*}
Hence
\[
\mathbb{E}\left[  \sup_{0\leq s\leq t}\left \vert X_{s}^{N}\right \vert
^{m}\right]  \leq C\left(  1+\left \vert x_{0}\right \vert ^{m}\right)
+C\int_{0}^{t}\mathbb{E}\left[  \sup_{0\leq r\leq s}\left \vert X_{r}%
^{N}\right \vert ^{m}\right]  ds,
\]
where $C\ $is a positive constant not depending on $N$. By the Gronwall
inequality, the required result $\left(  \ref{sde_esti}\right)  $\ follows.
\end{proof}

\begin{remark}
The above conclusion also holds for state processes $X_{t}^{\ast}$ and
$\bar{X}_{t}$.
\end{remark}

We need the following assumption:

\begin{description}
\item[$\left(  A2\right)  $] There exists a constant $c_{0}>0$, such that for
each $\left(  t_{i},x\right)  \in \left[  0,T\right]  \times \mathbb{R}^{n}$,
\[%
\begin{array}
[c]{r}%
\left \langle H_{u}\left(  t_{i},x,P_{t_{i}}^{N},Q_{t_{i}}^{N},\phi_{i}%
^{N}(x)\right)  -H_{u}\left(  t_{i},x,\bar{P}_{t_{i}},\bar{Q}_{t_{i}}%
,\bar{\phi}_{i}(x)\right)  ,\text{ }\phi_{i}^{N}(x)-\bar{\phi}_{i}%
(x)\right \rangle \geq c_{0}\left \vert \phi_{i}^{N} (x)-\bar{\phi}%
_{i}(x)\right \vert ^{2}\text{, }%
\end{array}
\]
where $(\bar{P}_{\cdot},\bar{Q}_{\cdot})$ and $(P_{\cdot}^{N},Q_{\cdot}^{N})$
are the adjoint processes with respect to $\bar{u}_{\cdot}$ and $u_{\cdot}%
^{N}$, respectively.
\end{description}

\begin{remark}
For fixed $\left(  t_{i},x\right)  \in \left[  0,T\right]  \times \mathbb{R}%
^{n}$, $H_{u}(t_{i},x,\bar{P}_{t_{i}},\bar{Q}_{t_{i}},\bar{\phi}_{i}(x))$ is a
deterministic function of $\bar{\phi}_{i}(x)$. The above assumption means that
$\bar{H}(u_{t_{i}}^{x}):=H\left(  t_{i},x,P_{t_{i}}^{u},Q_{t_{i}}^{u}%
,u_{t_{i}}^{x}\right)  $ is uniformly monotone around $u_{t_{i}}^{x}=\bar
{\phi}_{i}(x)$, that is, when $\bar{H}(\phi_{i}^{N}(x))$ and $\bar{H}%
(\bar{\phi}_{i}(x))$ are close, $\phi_{i}^{N}(x)$\ and $\bar{\phi}_{i}(x)$ are
also close. In particular, if $U$ is an open set, the above assumption is also
true for $\left \langle \bar{H}(\phi_{i}^{N}(x))-\bar{H}(\bar{\phi}%
_{i}(x)),\phi_{i}^{N}(x)-\bar{\phi}_{i}(x)\right \rangle \leq-c_{0}\left \vert
\phi_{i}^{N}(x)-\bar{\phi}_{i}(x)\right \vert ^{2}$.
\end{remark}

Now we state our main convergence result.

\begin{theorem}
\label{theorem}Suppose $\left(  A1\right)  -\left(  A2\right)  $ hold. We also
assume $b,\sigma \in C_{b}^{2,5,5}$, $f\in C_{b}^{2,5,5}$, $\phi^{\ast}\in
C_{b}^{2,4+\alpha}$, $\{ \bar{\phi}_{i}\}_{i=1}^{N-1},\{ \phi_{i}^{N}%
\}_{i=1}^{N-1},\{P_{i}^{N}\}_{i=1}^{N-1}\in C_{b}^{4}$ and $h\in
C_{b}^{5+\alpha}$, $\alpha>0$, and there exists a positive constant $L$, not
depending on $N$, such that $\sup_{i}\left(  \left \vert \phi_{i}^{N}\left(
0\right)  \right \vert +\left \vert \bar{\phi}_{i}\left(  0\right)  \right \vert
\right)  \leq L$.\ Then for sufficiently small time step $\Delta t$,
\[
\left \vert J\left(  u^{\ast}\right)  -J\left(  u^{N}\right)  \right \vert \leq
C\Delta t.
\]

\end{theorem}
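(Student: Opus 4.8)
The plan is to decompose the cost error into two pieces via the triangle inequality:
\[
\left\vert J\left(u^{\ast}\right)-J\left(u^{N}\right)\right\vert \leq \left\vert J\left(u^{\ast}\right)-J\left(\bar{u}\right)\right\vert + \left\vert J\left(\bar{u}\right)-J\left(u^{N}\right)\right\vert,
\]
where $\bar{u}$ is the discrete optimal control over $\mathcal{U}^{N}[0,T]$. The first term measures the error from restricting the control set to piecewise-feedback controls in $\mathcal{U}^{N}[0,T]\subset \mathcal{U}[0,T]$; the second term measures the error from approximating the discrete Hamiltonian system $\left(\ref{10.1}\right)-\left(\ref{10.3}\right)$ by the numerical Scheme \ref{Scheme1} (equivalently Scheme \ref{Scheme2} in the interior case). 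I would establish that each term is $O(\Delta t)$ separately.

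For the first term, I would exploit the feedback structure $u_{t}^{\ast}=\phi^{\ast}(t,X_{t}^{\ast})$ together with the regularity $\phi^{\ast}\in C_{b}^{2,4+\alpha}$. The idea is to construct a specific piecewise-feedback control in $\mathcal{U}^{N}[0,T]$ by freezing the optimal feedback at the grid points, e.g. taking $\phi_{i}(x)=\phi^{\ast}(t_{i},x)$, and comparing the resulting state process with $X_{\cdot}^{\ast}$ using the standard SDE estimate Lemma \ref{lemma2}. Since $\phi^{\ast}$ is Lipschitz and $C^1$ in $t$, the mismatch between $u_{t}^{\ast}=\phi^{\ast}(t,X_{t}^{\ast})$ and the frozen control $\phi^{\ast}(t_{i},X_{t_{i}})$ on each subinterval is $O(\Delta t)$ in an appropriate $L^2$ sense (combining time-increment and state-increment bounds). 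Feeding this into Lemma \ref{lemma2} gives $\mathbb{E}[\sup_{t}|X_{t}^{\ast}-X_{t}|^{2}]\leq C(\Delta t)^{2}$, and then the Lipschitz/boundedness hypotheses on $f,h$ in $\left(A1\right)$ yield $|J(u^{\ast})-J(\bar{u})|\leq |J(u^{\ast})-J(u)|=O(\Delta t)$ since $J(\bar{u})\leq J(u)$ and $J(\bar{u})\geq J(u^{\ast})$ sandwich the discrete optimum.

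For the second term, the strategy is to use the value-function representation from Lemma \ref{relation lemma}, recognizing that $J(\bar{u})=V^{0}(x_{0})$ for the discrete value functions $V^{i}$, and then to track how the numerical errors in $(P_{i}^{N},Q_{i}^{N},\phi_{i}^{N})$ versus $(\bar{P}_{t_{i}},\bar{Q}_{t_{i}},\bar{\phi}_{i})$ propagate through the backward recursion. The key estimates are: (i) the local truncation error of the Euler-type scheme $\left(\ref{X^N}\right)-\left(\ref{P^N}\right)$ for the FBSDE is $O(\Delta t)$ per step under the $C_{b}^{2,5,5}$ regularity (standard for the references \cite{ZhCP2006,ZhFZh2014}); and (ii) the control-error bound $|\phi_{i}^{N}(x)-\bar{\phi}_{i}(x)|\leq C|(P_{i}^{N},Q_{i}^{N})-(\bar{P}_{t_{i}},\bar{Q}_{t_{i}})|$, which comes directly from the uniform monotonicity assumption $\left(A2\right)$: applying $\left(A2\right)$ together with the optimality relations $\left(\ref{zero point}\right)$ and $\left(\ref{Theorem 1}\right)$ converts closeness of $H_{u}$ into closeness of the controls. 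One then sets up a discrete Gronwall argument over the $N=T/\Delta t$ time steps showing $\sup_{i}\mathbb{E}[|P_{i}^{N}-\bar{P}_{t_{i}}|^{2}+|\phi_{i}^{N}-\bar{\phi}_{i}|^{2}]=O((\Delta t)^{2})$, and feeds this control error back into a cost comparison via the state estimate Lemma \ref{SDE_Estimate}.

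The main obstacle will be step (ii)/the second term: carefully controlling the error accumulation across all $N$ backward steps without losing a factor of $N$ that would destroy the first-order rate. In particular, the control optimization error enters the Hamiltonian $H_{x}$ which feeds the backward $P$-recursion, so the errors in $\phi_{i}^{N}$, $P_{i}^{N}$, and $Q_{i}^{N}$ are mutually coupled; decoupling them requires that the monotonicity constant $c_{0}$ in $\left(A2\right)$ absorbs the perturbation and that the discrete Gronwall constant stays $N$-independent. A secondary difficulty is justifying that the cost functional depends on the control error only to first order (not merely Lipschitz but with the correct $O(\Delta t)$ scaling), which I expect to handle by a first-order Taylor expansion of $J$ around $\bar{u}$ combined with the near-optimality $H_{u}\approx 0$ to kill the leading-order sensitivity term, leaving a genuinely $O(\Delta t)$ remainder.
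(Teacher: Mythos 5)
Your overall decomposition is the paper's: the triangle inequality splits the error into the discrete-approximation term (handled in Theorem \ref{theorem2}) and the recursive-approximation term (Theorem \ref{theorem3}), and your sandwich $J(u^{\ast})\leq J(\bar{u})\leq J(\tilde{u})$ with the grid-frozen feedback $\tilde{u}_{t}=\sum_{i}\phi^{\ast}(t_{i},\tilde{X}_{t_{i}})I_{[t_{i},t_{i+1})}(t)$ is exactly how the paper begins. However, your argument for the first term contains a genuine gap: the claimed strong estimate $\mathbb{E}[\sup_{t}|X_{t}^{\ast}-\tilde{X}_{t}|^{2}]\leq C(\Delta t)^{2}$ is false. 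On $[t_{i},t_{i+1}]$ the $L^{2}$-mismatch between $\phi^{\ast}(t,\tilde{X}_{t})$ and $\phi^{\ast}(t_{i},\tilde{X}_{t_{i}})$ is dominated by the state increment, and $\mathbb{E}[|\tilde{X}_{t}-\tilde{X}_{t_{i}}|^{2}]\sim\Delta t$ because of the Brownian term; so the coefficient perturbation is $O(\sqrt{\Delta t})$ in $L^{2}$, not $O(\Delta t)$. Feeding this into Lemma \ref{lemma2} yields only $\mathbb{E}[\sup_{t}|X_{t}^{\ast}-\tilde{X}_{t}|^{2}]\leq C\Delta t$ (strong order $1/2$), and a Lipschitz bound on $f,h$ then gives $|J(u^{\ast})-J(\tilde{u})|=O(\sqrt{\Delta t})$ --- half the claimed rate.

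The missing idea is that the discrete-approximation term must be treated as a \emph{weak} error, not a strong one: the leading fluctuation from freezing the feedback has conditional mean zero, and you need a mechanism that exploits this cancellation. The paper's mechanism is the Feynman--Kac representation (Remark \ref{remark_F-K}): it writes $\mathbb{E}[f(t,X_{t}^{\ast},\phi^{\ast}(t,X_{t}^{\ast}))]$ and $\mathbb{E}[h(X_{T}^{\ast})]$ through smooth PDE solutions $v(\cdot,\cdot;t),\mu\in C_{b}^{2,4}$ of $\left(\ref{SDE-PDE}\right)$, applies It\^{o}'s formula so that $\mathcal{L}v=\mathcal{L}\mu=0$ annihilates the terms along $X^{\ast}$, and then applies It\^{o}'s formula a second time on each subinterval to the generator differences along $\tilde{X}$, producing per-step errors of size $(\Delta t)^{2}$ (with moments controlled by Lemma \ref{SDE_Estimate}) that sum over $N$ steps to $O(\Delta t)$. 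This Talay--Tubaro-type argument is precisely why the theorem assumes the high regularity $b,\sigma,f\in C_{b}^{2,5,5}$, $\phi^{\ast}\in C_{b}^{2,4+\alpha}$, $h\in C_{b}^{5+\alpha}$ --- hypotheses your Lipschitz-only argument would leave unused, which should have been a warning sign. Your plan for the recursive term, by contrast, is essentially the paper's: the monotonicity $\left(A2\right)$ together with the two variational conditions (the paper phrases them as projection fixed points in Lemma \ref{lemma4}, with an extra time-discretization residual $R_{H}^{i}$ satisfying $\mathbb{E}[|R_{H}^{i}|^{2}]\leq C(\Delta t)^{2}$), the coupled adjoint estimate of Lemma \ref{lemma6}, the scheme error of Lemma \ref{lemma5}, and a Gronwall closure with $N$-independent constants. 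One simplification relative to your plan: no Taylor expansion of $J$ around $\bar{u}$ with $H_{u}\approx0$ is needed --- once $\Delta t\sum_{j}\mathbb{E}[|\bar{\phi}_{j}(X_{t_{j}}^{N})-\phi_{j}^{N}(X_{t_{j}}^{N})|^{2}]\leq C(\Delta t)^{2}$ is established, the plain Lipschitz comparison of Theorem \ref{theorem3} already gives first order, because there the coefficient perturbation genuinely is $O(\Delta t)$.
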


The proof of our convergence theorem will be divided into two parts: discrete
approximation $\vert J\left(  u^{\ast}\right)  -J\left(  \bar{u}\right)
\vert$ and recursive approximation $|J\left(  \bar{u}\right)  -J(u^{N})|$.

\subsection{Discrete approximation}

In this subsection, we give the estimate of the discrete approximation error
$\left \vert J\left(  u^{\ast}\right)  -J\left(  \bar{u}\right)  \right \vert $.

\begin{theorem}
\label{theorem2}Suppose $\left(  A1\right)  \ $holds. We also assume that
$b,\sigma \in C_{b}^{2,4,4},f\in C_{b}^{2,4+\alpha,4+\alpha},\phi^{\ast}\in
C_{b}^{2,4+\alpha}$ and $h\in C_{b}^{4+\alpha}$, $\alpha>0$. Then for
sufficiently small $\Delta t$,
\begin{equation}
\left \vert J\left(  u^{\ast}\right)  -J\left(  \bar{u}\right)  \right \vert
\leq C\Delta t. \label{Discrete error}%
\end{equation}

\begin{proof}
For simplicity of presentation, in the following of this proof we only
consider the case $n=1$. Conclusions still hold for the case $n>1$. To begin
with, we define
\begin{equation}
\tilde{u}_{t}=\sum_{i=0}^{N-1}\phi^{\ast}(t_{i},\tilde{X}_{t_{i}})I_{\left[
t_{i},t_{i+1}\right)  }(t),
\end{equation}
where
\[
\left \{
\begin{array}
[c]{l}%
d\tilde{X}_{t}=b(t,\tilde{X}_{t},\phi^{\ast}(t_{i},\tilde{X}_{t_{i}}%
))+\sigma(t,\tilde{X}_{t},\phi^{\ast}(t_{i},\tilde{X}_{t_{i}}))dW_{t},\\
\tilde{X}_{t_{i}}=\tilde{X}_{t_{i}},\text{ \ }t\in \left[  t_{i},t_{i+1}%
\right]  ,\text{ }i=0,1,\ldots,N-1,
\end{array}
\right.
\]
and $\tilde{X}_{0}=x_{0}.$ Since $\tilde{u}_{\cdot}\in \mathcal{U}^{N}[0,T]$,
we have $J\left(  u^{\ast}\right)  \leq J\left(  \bar{u}\right)  \leq J\left(
\tilde{u}\right)  $. So to prove $\left(  \ref{Discrete error}\right)  $, it
suffices to prove
\begin{equation}
\left \vert J\left(  u^{\ast}\right)  -J\left(  \tilde{u}\right)  \right \vert
\leq C\Delta t.\label{20}%
\end{equation}
Let $J\left(  u^{\ast}\right)  -J\left(  \tilde{u}\right)  =J_{1}+J_{2}$,
where
\[%
\begin{array}
[c]{l}%
\displaystyle J_{1}=\int_{0}^{T}\mathbb{E}\left[  f\left(  t,X_{t}^{\ast}%
,\phi^{\ast}(t,X_{t}^{\ast})\right)  -f(t,\tilde{X}_{t},\phi^{\ast}%
(t,\tilde{X}_{t}))\right]  dt+\mathbb{E}\left[  h\left(  X_{T}^{\ast}\right)
-h(\tilde{X}_{T})\right]  ,\\
\displaystyle J_{2}=\int_{0}^{T}\sum_{i=0}^{N-1}\mathbb{E}\left[
f(t,\tilde{X}_{t},\phi^{\ast}(t,\tilde{X}_{t}))-f(t,\tilde{X}_{t},\phi^{\ast
}(t_{i},\tilde{X}_{t_{i}}))\right]  I_{\left[  t_{i},t_{i+1}\right)  }(t)dt.
\end{array}
\]
Note that
\[
X_{t}^{\ast}=x_{0}+\int_{0}^{t}b^{\ast}\left(  s,X_{s}^{\ast}\right)
ds+\int_{0}^{t}\sigma^{\ast}\left(  s,X_{s}^{\ast}\right)  dW_{s},\text{ }%
t\in \left[  0,T\right]  ,
\]
where $b^{\ast}\left(  s,X_{s}^{\ast}\right)  :=b(s,X_{s}^{\ast},\phi^{\ast
}(s,X_{s}^{\ast}))$ and $\sigma^{\ast}\left(  s,X_{s}^{\ast}\right)
:=\sigma(s,X_{s}^{\ast},\phi^{\ast}(s,X_{s}^{\ast}))$. Since\ $b^{\ast}%
,\sigma^{\ast}\in C_{b}^{2,4}$, $f\in C_{b}^{2,4+\alpha,4+\alpha}$,
$\phi^{\ast}\in C_{b}^{2,4+\alpha}$ and $h\in C_{b}^{4+\alpha}$, $\alpha>0$,
by Remark \ref{remark_F-K}, we then have%
\begin{equation}%
\begin{array}
[c]{ll}%
v\left(  s,x;t\right)  =\mathbb{E}\left[  f\left(  t,X_{t}^{\ast,s,x}%
,\phi^{\ast}(t,X_{t}^{\ast,s,x})\right)  \right]  , & \mu \left(  t,x\right)
=\mathbb{E}\left[  h\left(  X_{T}^{\ast,t,x}\right)  \right]  ,
\end{array}
\label{28}%
\end{equation}
where $\mu \left(  \cdot,\cdot \right)  ,v\left(  \cdot,\cdot;t\right)  \in
C_{b}^{2,4}$ are the solution of $\left(  \ref{SDE-PDE}\right)  $ with the
terminal $\mu \left(  T,x\right)  =h(x)$ and $v\left(  t,x;t\right)  =f\left(
t,x,\phi^{\ast}\left(  t,x\right)  \right)  $, respectively. By applying
It\^{o}'s formula\ to $\mu \left(  T,X_{T}^{\ast}\right)  $ and $v\left(
t,X_{t}^{\ast};t\right)  $, from $\left(  \ref{SDE-PDE}\right)  $, we obtain
\begin{equation}%
\begin{array}
[c]{l}%
\displaystyle \mathbb{E}\left[  v\left(  t,X_{t}^{\ast};t\right)  \right]
=\mathbb{E}\left[  v\left(  0,x_{0};t\right)  \right]  +\int_{0}^{t}%
\mathbb{E}\left[  \mathcal{L}v\left(  s,X_{s}^{\ast};t\right)  \right]
ds=\mathbb{E}\left[  v\left(  0,x_{0};t\right)  \right]  ,\\
\displaystyle \mathbb{E}\left[  \mu \left(  T,X_{T}^{\ast}\right)  \right]
=\mathbb{E}\left[  \mu \left(  0,x_{0}\right)  \right]  +\int_{0}^{T}%
\mathbb{E}\left[  \mathcal{L}\mu \left(  s,X_{s}^{\ast}\right)  \right]
ds=\mathbb{E}\left[  \mu \left(  0,x_{0}\right)  \right]  .
\end{array}
\label{26}%
\end{equation}
On the one hand, combining $\left(  \ref{28}\right)  -\left(  \ref{26}\right)
$, we have
\begin{align}
\left \vert J_{1}\right \vert  &  \leq \int_{0}^{T}\left \vert \mathbb{E}\left[
v\left(  t,X_{t}^{\ast};t\right)  -v(t,\tilde{X}_{t};t)\right]  \right \vert
dt+\left \vert \mathbb{E}\left[  \mu \left(  T,X_{T}^{\ast}\right)
-\mu(T,\tilde{X}_{T})\right]  \right \vert \label{30}\\
&  \leq \int_{0}^{T}\left \vert \mathbb{E}\left[  v(t,\tilde{X}_{t};t)-v\left(
0,x_{0};t\right)  \right]  \right \vert dt+\left \vert \mathbb{E}\left[
\mu(T,\tilde{X}_{T})-\mu \left(  0,x_{0}\right)  \right]  \right \vert
.\nonumber
\end{align}
Set%
\[%
\begin{array}
[c]{rr}%
b_{i}(t,\tilde{X}_{t})=b(t,\tilde{X}_{t},\phi^{\ast}(t_{i},\tilde{X}_{t_{i}%
})), & \sigma_{i}(t,\tilde{X}_{t})=\sigma(t,\tilde{X}_{t},\phi^{\ast}%
(t_{i},\tilde{X}_{t_{i}})).
\end{array}
\]
By It\^{o}'s formula and $\left(  \ref{SDE-PDE}\right)  $, we have%
\begin{align}
\left \vert \mathbb{E}\left[  \mu(T,\tilde{X}_{T})-\mu \left(  0,x_{0}\right)
\right]  \right \vert  &  \leq \sum_{i=0}^{N-1}\left \vert \mathbb{E}\left[
\mu(t_{i+1},\tilde{X}_{t_{i+1}})-\mu(t_{i},\tilde{X}_{t_{i}})\right]
\right \vert \\
&  \leq \sum_{i=0}^{N-1}\left \vert \int_{t_{i}}^{t_{i+1}}\mathbb{E}\left[
\partial_{t}\mu(s,\tilde{X}_{s})+b_{i}(s,\tilde{X}_{s})\partial_{x}%
\mu(s,\tilde{X}_{s})\right.  \right.  \nonumber \\
&  \text{ \  \  \ }+\left.  \left.  \frac{1}{2}\sigma_{i}^{2}(s,\tilde{X}%
_{s})\partial_{xx}^{2}\mu(s,\tilde{X}_{s})-\mathcal{L}\mu(t_{i},\tilde
{X}_{t_{i}})\right]  ds\right \vert ,\text{ \  \  \  \  \  \  \  \  \  \  \  \  \  \  \  \  \ }%
\nonumber
\end{align}
which implies%
\begin{align}
\left \vert \mathbb{E}\left[  \mu(T,\tilde{X}_{T})-\mu \left(  0,x_{0}\right)
\right]  \right \vert  &  \leq \sum_{i=0}^{N-1}\left \vert \int_{t_{i}}^{t_{i+1}%
}\mathbb{E}\left[  \left(  \partial_{t}\mu(s,\tilde{X}_{s})-\partial_{t}%
\mu(t_{i},\tilde{X}_{t_{i}})\right)  \right.  \right.  \label{21}\\
&  \text{ \  \  \ }+\left(  b_{i}(s,\tilde{X}_{s})\partial_{x}\mu(s,\tilde
{X}_{s})-b_{i}(t_{i},\tilde{X}_{t_{i}})\partial_{x}\mu(t_{i},\tilde{X}_{t_{i}%
})\right)  \text{\  \  \ }\nonumber \\
&  \text{ \  \  \ }+\left.  \left.  \frac{1}{2}\left(  \sigma_{i}^{2}%
(s,\tilde{X}_{s})\partial_{xx}^{2}\mu(s,\tilde{X}_{s})-\sigma_{i}^{2}%
(t_{i},\tilde{X}_{t_{i}})\partial_{xx}^{2}\mu(t_{i},\tilde{X}_{t_{i}})\right)
\right]  ds\right \vert .\nonumber
\end{align}
Using It\^{o}'s formula again, we have%
\begin{align}
&  \left \vert \mathbb{E}\left[  \partial_{t}\mu(s,\tilde{X}_{s})-\partial
_{t}\mu(t_{i},\tilde{X}_{t_{i}})\right]  \right \vert \label{22}\\
&  \leq \int_{t_{i}}^{s}\mathbb{E}\left[  \left \vert \partial_{tt}^{2}%
\mu(r,\tilde{X}_{r})+b_{i}(r,\tilde{X}_{r})\partial_{tx}^{2}\mu(r,\tilde
{X}_{r})+\frac{1}{2}\sigma_{i}^{2}(r,\tilde{X}_{r})\partial_{txx}^{3}%
\mu(r,\tilde{X}_{r})\right \vert \right]  dr\text{
\  \  \  \  \  \  \  \  \  \  \  \  \  \  \  \  \  \ }\nonumber \\
&  \leq C\Delta t+C\int_{t_{i}}^{s}\mathbb{E}\left[  1+|\tilde{X}_{r}%
|^{2}\right]  dr.\nonumber
\end{align}
Similarly, we can obtain
\begin{align}
&  \left \vert \mathbb{E}\left[  b_{i}(s,\tilde{X}_{s})\partial_{x}\mu
(s,\tilde{X}_{s})-b_{i}(t_{i},\tilde{X}_{t_{i}})\partial_{x}\mu(t_{i}%
,\tilde{X}_{t_{i}})\right]  \right \vert \leq C\Delta t+C\int_{t_{i}}%
^{s}\mathbb{E}\left[  1+|\tilde{X}_{r}|^{3}\right]  dr,\\
&  \left \vert \mathbb{E}\left[  \sigma_{i}^{2}(s,\tilde{X}_{s})\partial
_{xx}^{2}\mu(s,\tilde{X}_{s})-\sigma_{i}^{2}(t_{i},\tilde{X}_{t_{i}}%
)\partial_{xx}^{2}\mu(t_{i},\tilde{X}_{t_{i}})\right]  \right \vert \leq
C\Delta t+C\int_{t_{i}}^{s}\mathbb{E}\left[  1+|\tilde{X}_{r}|^{4}\right]
dr.\label{25}%
\end{align}
From $\left(  \ref{21}\right)  -\left(  \ref{25}\right)  $, by Lemma
\ref{SDE_Estimate}, it follows\ that%
\begin{equation}
\left \vert \mathbb{E}\left[  \mu(T,\tilde{X}_{T})-\mu \left(  0,x_{0}\right)
\right]  \right \vert \leq C\Delta t.
\end{equation}
In the same way, we can estimate $\int_{0}^{T}|\mathbb{E}[v(t,\tilde{X}%
_{t};t)-v(0,x_{0};t)]|dt\leq C\Delta t$. Then, by $\left(  \ref{30}\right)  $,
it follows that $\left \vert J_{1}\right \vert \leq C\Delta t$. On the other
hand, seeing that
\begin{align*}
\left \vert J_{2}\right \vert  &  \leq \sum_{i=0}^{N-1}\int_{t_{i}}^{t_{i+1}%
}\left \vert \mathbb{E}\left[  f(t,\tilde{X}_{t},\phi^{\ast}(t,\tilde{X}%
_{t}))-f(t,\tilde{X}_{t},\phi^{\ast}(t_{i},\tilde{X}_{t_{i}}))\right]
\right \vert dt\text{ \  \  \  \  \  \  \  \  \  \  \  \  \  \  \  \  \ }\\
&  =\sum_{i=0}^{N-1}\int_{t_{i}}^{t_{i+1}}\left \vert \mathbb{E}\left[
f(t,\tilde{X}_{t},\phi^{\ast}(t,\tilde{X}_{t}))-f(t_{i},\tilde{X}_{t_{i}}%
,\phi^{\ast}(t_{i},\tilde{X}_{t_{i}}))\right]  \right.  \\
&  \text{ \  \  \ }-\left.  \mathbb{E}\left[  f(t,\tilde{X}_{t},\phi^{\ast
}(t_{i},\tilde{X}_{t_{i}}))-f(t_{i},\tilde{X}_{t_{i}},\phi^{\ast}(t_{i}%
,\tilde{X}_{t_{i}}))\right]  \right \vert dt.
\end{align*}
By It\^{o}'s formula, we have
\begin{align}
\left \vert J_{2}\right \vert  &  \leq \sum_{i=0}^{N-1}\int_{t_{i}}^{t_{i+1}%
}\left \{  \int_{t_{i}}^{t}\mathbb{E}\left[  \left \vert \partial_{t}%
f(s,\tilde{X}_{s},\phi^{\ast}(s,\tilde{X}_{s}))+b_{i}(s,\tilde{X}_{s}%
)\partial_{x}f(s,\tilde{X}_{s},\phi^{\ast}(s,\tilde{X}_{s}))\right.  \right.
\right.  \nonumber \\
&  \text{\  \  \  \ }+\left.  \left.  \frac{1}{2}\sigma_{i}^{2}(s,\tilde{X}%
_{s})\partial_{xx}^{2}f(s,\tilde{X}_{s},\phi^{\ast}(s,\tilde{X}_{s}%
))\right \vert \right]  ds\\
&  \text{ \  \  \ }+\int_{t_{i}}^{t}\mathbb{E}\left[  \left \vert \partial
_{t}f(s,\tilde{X}_{s},\phi^{\ast}(t_{i},\tilde{X}_{t_{i}}))+b_{i}(s,\tilde
{X}_{s})\partial_{x}f(s,\tilde{X}_{s},\phi^{\ast}(t_{i},\tilde{X}_{t_{i}%
}))\right.  \right.  \nonumber \\
&  \text{ \  \  \ }+\left.  \left.  \left.  \frac{1}{2}\sigma_{i}^{2}%
(s,\tilde{X}_{s})\partial_{xx}^{2}f(s,\tilde{X}_{s},\phi^{\ast}(t_{i}%
,\tilde{X}_{t_{i}}))\right \vert \right]  ds\right \}  dt.\nonumber
\end{align}
Then, under the conditions of $b,\sigma,\phi^{\ast}$\ and $f$, by Lemma
\ref{SDE_Estimate}, we obtain
\begin{equation}
\left \vert J_{2}\right \vert \leq C\Delta t+C\sum_{i=0}^{N-1}\int_{t_{i}%
}^{t_{i+1}}\int_{t_{i}}^{t}\mathbb{E}\left[  1+|\tilde{X}_{s}|^{2}\right]
dsdt\leq C\Delta t.
\end{equation}
In conclusion, $\left(  \ref{20}\right)  $ holds. The proof is complete.
\end{proof}
\end{theorem}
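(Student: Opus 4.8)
The plan is to split the total error by the triangle inequality,
\[
\left\vert J(u^{\ast})-J(u^{N})\right\vert \leq \left\vert J(u^{\ast})-J(\bar{u})\right\vert +\left\vert J(\bar{u})-J(u^{N})\right\vert ,
\]
and to bound each piece by $C\Delta t$. The first term is precisely the discrete approximation error, which is already controlled by Theorem \ref{theorem2}. Since the regularity assumed here ($b,\sigma\in C_{b}^{2,5,5}$, $f\in C_{b}^{2,5,5}$, $\phi^{\ast}\in C_{b}^{2,4+\alpha}$, $h\in C_{b}^{5+\alpha}$) is stronger than the hypotheses of Theorem \ref{theorem2}, that theorem applies directly and yields $|J(u^{\ast})-J(\bar{u})|\leq C\Delta t$. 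It therefore remains to estimate the recursive approximation error $|J(\bar{u})-J(u^{N})|$, and it is here that assumptions $(A1)$--$(A2)$ and the additional smoothness enter.

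For the recursive term I would first exploit the optimality of $\bar{u}$. Both $\bar{u}$ and $u^{N}$ lie in the convex set $\mathcal{U}^{N}[0,T]$, so $J(\bar{u})\leq J(u^{N})$ and $|J(\bar{u})-J(u^{N})|=J(u^{N})-J(\bar{u})\geq 0$ is a genuine suboptimality gap. Writing the perturbation as $\delta\phi_{i}:=\phi_{i}^{N}-\bar{\phi}_{i}$ on each subinterval and expanding $J$ around $\bar{u}$ by Taylor's formula — exactly as in the proof of Theorem \ref{theorem1}, using the variational equations and It\^{o}'s formula applied to $\bar{P}_{t}\hat{X}_{t}^{i}$ — the first-order contribution reduces to the discrete-SMP expression $\sum_{i}\mathbb{E}[\langle \int_{t_{i}}^{t_{i+1}}H_{u}(t,\bar{X}_{t},\bar{P}_{t},\bar{Q}_{t},\bar{\phi}_{i}(\bar{X}_{t_{i}}))\,dt,\delta\phi_{i}(\bar{X}_{t_{i}})\rangle]$. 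In the interior case this vanishes by (\ref{Theorem 1}); in general it is nonnegative and, by Cauchy--Schwarz together with the moment bound $\mathbb{E}|\int_{t_{i}}^{t_{i+1}}H_{u}\,dt|^{2}\leq C(\Delta t)^{2}$ from Lemma \ref{SDE_Estimate}, it is bounded by $C\Delta t\,(\sup_{i}\mathbb{E}|\delta\phi_{i}|^{2})^{1/2}$. The second-order term is $\leq C\sum_{i}\mathbb{E}[|\delta\phi_{i}(\bar{X}_{t_{i}})|^{2}]\Delta t$, the constant coming from the uniform derivative bounds in $(A1)$. Hence the whole recursive gap is controlled once I establish the control bound $\sup_{i}\mathbb{E}[|\phi_{i}^{N}(\bar{X}_{t_{i}})-\bar{\phi}_{i}(\bar{X}_{t_{i}})|^{2}]\leq C(\Delta t)^{2}$.

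To obtain that control bound I would compare the continuous Hamiltonian system (\ref{10.1})--(\ref{10.3}) satisfied by $(\bar{X},\bar{P},\bar{Q},\bar{\phi}_{i})$ with the discrete Scheme \ref{Scheme1} satisfied by $(X^{N},P^{N},Q^{N},\phi_{i}^{N})$, running backward from $i=N-1$. The strengthened smoothness makes the adjoint a regular function of the state — by Lemma \ref{F-K} with $k=1$, $\bar{P}(t,x)$ and $\bar{Q}(t,x)$ belong to $C_{b}^{2,4}$ — so the Euler-type steps (\ref{Q^N})--(\ref{P^N}) are genuinely first-order accurate, giving $\mathbb{E}[|P_{i}^{N}(\bar{X}_{t_{i}})-\bar{P}_{t_{i}}|^{2}+|Q_{i}^{N}(\bar{X}_{t_{i}})-\bar{Q}_{t_{i}}|^{2}]\leq C(\Delta t)^{2}+C\Delta t\,\sup_{j>i}\mathbb{E}|\delta\phi_{j}|^{2}$ at each step. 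Assumption $(A2)$, the uniform monotonicity of $H_{u}$ in the control, then converts Hamiltonian closeness into control closeness: subtracting the variational relations (\ref{fai^N}) and (\ref{10.3}) and testing against $\phi_{i}^{N}-\bar{\phi}_{i}$ gives $c_{0}|\delta\phi_{i}(x)|^{2}\leq C(|P_{i}^{N}-\bar{P}_{t_{i}}|+|Q_{i}^{N}-\bar{Q}_{t_{i}}|)\,|\delta\phi_{i}(x)|$, whence $|\delta\phi_{i}|\leq C(|P_{i}^{N}-\bar{P}_{t_{i}}|+|Q_{i}^{N}-\bar{Q}_{t_{i}}|)$. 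Feeding this back into the adjoint error at the previous step and closing the backward recursion with a discrete Gronwall inequality (valid for sufficiently small $\Delta t$) yields $\sup_{i}\mathbb{E}[|\delta\phi_{i}|^{2}]\leq C(\Delta t)^{2}$. Consequently $|J(\bar{u})-J(u^{N})|\leq C\Delta t$, and combined with Theorem \ref{theorem2} this gives $|J(u^{\ast})-J(u^{N})|\leq C\Delta t$.

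The main obstacle is this last step: controlling $\delta\phi_{i}$ uniformly in $i$ through the coupled backward recursion. The difficulty is threefold — the adjoint errors and the control errors feed into one another at every time step, the feedback controls are evaluated at two distinct and themselves discrepant state trajectories $\bar{X}$ and $X^{N}$, and the monotonicity constant $c_{0}$ from $(A2)$ must be balanced against the Lipschitz/Gronwall growth so that the recursion closes without blowing up as $N\to\infty$. The extra regularity hypotheses are needed precisely to make each per-step Euler error genuinely $O(\Delta t)$ rather than merely consistent, so that after summation over the $N\sim\Delta t^{-1}$ steps the accumulated error remains $O(\Delta t)$.
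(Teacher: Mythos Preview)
You are proving the wrong theorem. The statement in question is Theorem~\ref{theorem2}, whose conclusion is
\[
\left\vert J(u^{\ast})-J(\bar u)\right\vert \le C\Delta t,
\]
i.e.\ the gap between the continuous optimal cost and the \emph{discrete} optimal cost over $\mathcal U^{N}[0,T]$. Your entire write-up instead targets $\left\vert J(u^{\ast})-J(u^{N})\right\vert$, which is the main Theorem~\ref{theorem}; you even invoke Theorem~\ref{theorem2} as a black box (``already controlled by Theorem~\ref{theorem2}''), which makes the argument circular. All of the machinery you develop --- the recursive comparison of $(\bar P,\bar Q,\bar\phi_i)$ with $(P^{N},Q^{N},\phi_i^{N})$, the use of assumption~$(A2)$, the discrete Gronwall closure --- is aimed at bounding $|J(\bar u)-J(u^{N})|$ and has nothing to do with what Theorem~\ref{theorem2} actually asserts. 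Note also that the hypotheses you list ($b,\sigma\in C_b^{2,5,5}$, $h\in C_b^{5+\alpha}$, and $(A2)$) are those of Theorem~\ref{theorem}, not of the present statement, which only assumes $(A1)$ plus $C_b^{2,4,4}$/$C_b^{4+\alpha}$ regularity.

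The paper's proof of Theorem~\ref{theorem2} proceeds quite differently. Since $\bar u$ is unknown, one introduces the explicit comparison control $\tilde u_t=\sum_i \phi^{\ast}(t_i,\tilde X_{t_i})I_{[t_i,t_{i+1})}(t)\in\mathcal U^{N}[0,T]$ obtained by freezing the optimal feedback at the grid, so that $J(u^{\ast})\le J(\bar u)\le J(\tilde u)$ and it suffices to bound $|J(u^{\ast})-J(\tilde u)|$. This is then split as $J_1+J_2$, and each piece is estimated by a weak-error argument: one uses the Feynman--Kac representations $\mu(t,x)=\mathbb E[h(X_T^{\ast,t,x})]$ and $v(s,x;t)=\mathbb E[f(t,X_t^{\ast,s,x},\phi^{\ast}(t,X_t^{\ast,s,x}))]$ (which lie in $C_b^{2,4}$ under the stated regularity), applies It\^o's formula to $\mu(\cdot,\tilde X_\cdot)$ piecewise on $[t_i,t_{i+1}]$, subtracts $\mathcal L\mu=0$, and applies It\^o once more to show each subinterval contributes $O((\Delta t)^2)$, hence $O(\Delta t)$ after summing over $N$ pieces. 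No adjoint equations, no $(A2)$, and no comparison with the numerical scheme enter.
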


\subsection{Recursive approximation}

In this subsection, we estimate the recursive approximation error $\left \vert
J\left(  \bar{u}\right)  -J\left(  u^{N}\right)  \right \vert $ generated by
the numerical recursive approximation. Consider the following FBSDEs:%
\begin{equation}
\left \{
\begin{array}
[c]{l}%
\displaystyle X_{t}^{N}=X_{t_{i}}^{N}+\int_{t_{i}}^{t}b\left(  s,X_{s}%
^{N},\phi_{i}^{N}(X_{t_{i}}^{N})\right)  ds+\int_{t_{i}}^{t}\sigma \left(
s,X_{s}^{N},\phi_{i}^{N}(X_{t_{i}}^{N})\right)  dW_{s},\\
\displaystyle P_{t}^{N}=P_{t_{i+1}}^{N}+\int_{t}^{t_{i+1}}H_{x}\left(
s,X_{s}^{N},P_{s}^{N},Q_{s}^{N},\phi_{i}^{N}(X_{t_{i}}^{N})\right)
ds-\int_{t}^{t_{i+1}}Q_{s}^{N}dW_{s},
\end{array}
\right.  \label{23}%
\end{equation}
for $t\in \lbrack t_{i},t_{i+1}]$, $i=0,1,\ldots,N-1$, with $X_{t}^{N}=x_{0}$
and $P_{T}^{N}=h_{x}(X_{T}^{N})$. For notational simplicity, in the sequel,
for $t\in \lbrack t_{i},t_{i+1}]$, we let
\[
\mathcal{H}_{t}^{N}=H_{x}\left(  t,X_{t}^{N},P_{t}^{N},Q_{t}^{N},\phi_{i}%
^{N}(X_{t_{i}}^{N})\right)  .
\]

Let $(\bar{X}_{t}^{t_{i},\alpha},\bar{P}_{t}^{t_{i},\alpha},\bar{Q}_{t}%
^{t_{i},\alpha})$ be the solution of FBSDEs $\left(  \ref{10.1}\right)
-\left(  \ref{10.2}\right)  $ with $\bar{X}_{t_{i}}=\alpha$, for $t\in \lbrack
t_{i},T]$. Denote $b(t,\bar{X}_{t}^{t_{i},\alpha},\bar{\phi}_{i}(\alpha))$ by
$\bar{b}_{t}^{N,t_{i},\alpha}$, $\sigma(t,\bar{X}_{t}^{t_{i},\alpha}$,
$\bar{\phi}_{i}(\alpha))$ by $\bar{\sigma}_{t}^{N,t_{i},\alpha}$ and
\[
\mathcal{\bar{H}}_{t}^{t_{i},\alpha}=H_{x}\left(  t,\bar{X}_{t}^{t_{i},\alpha
},\bar{P}_{t}^{t_{i},\alpha},\bar{Q}_{t}^{t_{i},\alpha},\bar{\phi}_{i}\left(
\alpha \right)  \right)  .
\]
Then for $i=0,1,\ldots,N-1$,
\begin{equation}
\left \{
\begin{array}
[c]{l}%
\displaystyle \bar{X}_{t_{i+1}}^{t_{i},X_{t_{i}}^{N}}=X_{t_{i}}^{N}%
+\int_{t_{i}}^{t_{i+1}}\bar{b}_{t}^{N,t_{i},X_{t_{i}}^{N}}dt+\int_{t_{i}%
}^{t_{i+1}}\bar{\sigma}_{t}^{N,t_{i},X_{t_{i}}^{N}}dW_{t},\\
\displaystyle \bar{P}_{t_{i}}^{t_{i},X_{t_{i}}^{N}}=\bar{P}_{t_{i+1}}%
^{t_{i},X_{t_{i}}^{N}}+\int_{t_{i}}^{t_{i+1}}\mathcal{\bar{H}}_{t}%
^{t_{i},X_{t_{i}}^{N}}dt-\int_{t_{i}}^{t_{i+1}}\bar{Q}_{t}^{t_{i},X_{t_{i}%
}^{N}}dW_{t}.
\end{array}
\right.  \label{29}%
\end{equation}

We have the following error estimate.

\begin{lemma}
\label{lemma6}Suppose $\left(  A1\right)  \ $holds. Let $\left(  \bar{X}%
_{t},\bar{P}_{t},\bar{Q}_{t}\right)  $ and $\left(  X_{t}^{N},P_{t}^{N}%
,Q_{t}^{N}\right)  $, $t\in \left[  0,T\right]  $, be the solutions of $\left(
\ref{10.1}\right)  -\left(  \ref{10.2}\right)  $ and $\left(  \ref{23}\right)
$, respectively. We also assume $b,\sigma,f\in C_{b}^{2,5,5}$, $\{ \bar{\phi
}_{i}\}_{i=1}^{N-1},\{ \phi_{i}^{N}\}_{i=1}^{N-1}\in C_{b}^{4}$ and $h\in
C_{b}^{5+\alpha}$, $\alpha>0$, and there exists a positive constant $L$, not
depending on $N$, such that $\sup_{i}\left(  |\phi_{i}^{N}(0)|+|\bar{\phi}%
_{i}(0)|\right)  \leq L$. Then for $i=0,1,\ldots,N-1$,
\[
\mathbb{E}\left[  \left \vert \bar{P}_{t_{i}}^{t_{i},X_{t_{i}}^{N}}-P_{t_{i}%
}^{N}\right \vert ^{2}\right]  +\Delta t\sum_{j=i}^{N-1}\mathbb{E}\left[
\left \vert \bar{Q}_{t_{j}}^{t_{j},X_{t_{j}}^{N}}-Q_{t_{j}}^{N}\right \vert
^{2}\right]  \leq C\left(  \Delta t\right)  ^{2}+C\Delta t\sum_{j=i}%
^{N-1}\mathbb{E}\left[  \left \vert \bar{\phi}_{j}(X_{t_{j}}^{N})-\phi_{j}%
^{N}(X_{t_{j}}^{N})\right \vert ^{2}\right]  .
\]

\begin{proof}
For simplicity, we introduce the following notation:
\[%
\begin{array}
[c]{ll}%
\hat{P}_{t_{i}}=\bar{P}_{t_{i}}^{t_{i},X_{t_{i}}^{N}}-P_{t_{i}}^{N},\text{ } &
\hat{P}_{t_{i+1}}=\bar{P}_{t_{i+1}}^{t_{i+1},X_{t_{i+1}}^{N}}-P_{t_{i+1}}%
^{N},\\
\hat{Q}_{t_{i}}=\bar{Q}_{t_{i}}^{t_{i},X_{t_{i}}^{N}}-Q_{t_{i}}^{N},\text{ } &
\hat{Q}_{t_{i+1}}=\bar{Q}_{t_{i+1}}^{t_{i+1},X_{t_{i+1}}^{N}}-Q_{t_{i+1}}^{N}.
\end{array}
\]
We also denote $\mathcal{\bar{H}}_{t}^{t_{i},X_{t_{i}}^{N}}-\mathcal{H}%
_{t}^{N}$ by $\mathcal{\hat{H}}_{t}$. For each integer $0\leq i\leq N-1,$ from
$\left(  \ref{23}\right)  $ and $\left(  \ref{29}\right)  $, we obtain%
\begin{equation}
\hat{P}_{t_{i}}=\hat{P}_{t_{i+1}}+\bar{P}_{t_{i+1}}^{t_{i},X_{t_{i}}^{N}}%
-\bar{P}_{t_{i+1}}^{t_{i+1},X_{t_{i+1}}^{N}}+\int_{t_{i}}^{t_{i+1}%
}\mathcal{\hat{H}}_{t}dt-\int_{t_{i}}^{t_{i+1}}\left(  \bar{Q}_{t}%
^{t_{i},X_{t_{i}}^{N}}-Q_{t}^{N}\right)  dW_{t}. \label{45}%
\end{equation}
Define the conditional mathematical expectation $\mathbb{E}_{t_{i}}^{x}\left[
\cdot \right]  :=\mathbb{E}\left[  \left.  \cdot \right \vert \bar{X}_{t_{i}%
}=x\right]  $ and denote $\mathbb{E}_{t_{i}}^{X_{t_{i}}^{N}}\left[
\cdot \right]  $ by $\mathbb{E}_{t_{i}}^{N}\left[  \cdot \right]  $. It is easy
to check that the equation above\ is equivalent to the following equations:%
\begin{align}
&  \hat{P}_{t_{i}}=\mathbb{E}_{t_{i}}^{N}[\hat{P}_{t_{i+1}}]+\mathcal{\hat{H}%
}_{t_{i}}\Delta t+R_{p,i}+\bar{R}_{p,i},\label{p}\\
&  \hat{Q}_{t_{i}}\Delta t=\mathbb{E}_{t_{i}}^{N}[\hat{P}_{t_{i+1}}\Delta
W_{t_{i+1}}]+\bar{R}_{q,i}+R_{q,i}, \label{q}%
\end{align}
where the error terms
\begin{align*}
&
\begin{array}
[c]{l}%
\displaystyle R_{p,i}=\int_{t_{i}}^{t_{i+1}}\mathbb{E}_{t_{i}}^{N}\left[
\mathcal{\hat{H}}_{t}-\mathcal{\hat{H}}_{t_{i}}\right]  dt,\\
\displaystyle R_{q,i}=\int_{t_{i}}^{t_{i+1}}\mathbb{E}_{t_{i}}^{N}\left[
\mathcal{\hat{H}}_{t}\Delta W_{t_{i+1}}\right]  dt-\int_{t_{i}}^{t_{i+1}%
}\mathbb{E}_{t_{i}}^{N}\left[  \left(  \bar{Q}_{t}^{t_{i},X_{t_{i}}^{N}}%
-Q_{t}^{N}\right)  -\hat{Q}_{t_{i}}\right]  dt,
\end{array}
\\
&
\begin{array}
[c]{ll}%
\displaystyle \bar{R}_{p,i}=\mathbb{E}_{t_{i}}^{N}\left[  \bar{P}_{t_{i+1}%
}^{t_{i},X_{t_{i}}^{N}}-\bar{P}_{t_{i+1}}^{t_{i+1},X_{t_{i+1}}^{N}}\right]
, & \displaystyle \bar{R}_{q,i}=\mathbb{E}_{t_{i}}^{N}\left[  \left(  \bar
{P}_{t_{i+1}}^{t_{i},X_{t_{i}}^{N}}-\bar{P}_{t_{i+1}}^{t_{i+1},X_{t_{i+1}}%
^{N}}\right)  \Delta W_{t_{i+1}}\right]  .
\end{array}
\end{align*}
By H\"{o}lder's inequality, we have
\begin{align}
\left \vert \mathbb{E}_{t_{i}}^{N}[\hat{P}_{t_{i+1}}\Delta W_{t_{i+1}%
}]\right \vert ^{2}  &  \leq \left \vert \mathbb{E}_{t_{i}}^{N}\left[  \left(
\hat{P}_{t_{i+1}}-\mathbb{E}_{t_{i}}^{N}[\hat{P}_{t_{i+1}}]\right)  \Delta
W_{t_{i+1}}\right]  \right \vert ^{2}\label{46}\\
&  \leq \left(  \mathbb{E}_{t_{i}}^{N}\left[  |\hat{P}_{t_{i+1}}|^{2}\right]
-\left \vert \mathbb{E}_{t_{i}}^{N}[\hat{P}_{t_{i+1}}]\right \vert ^{2}\right)
\Delta t.\nonumber
\end{align}
Taking square of $\left(  \ref{p}\right)  -\left(  \ref{q}\right)  $ and using
the inequalities $\left(  \ref{46}\right)  $ and $(a+b)^{2}\leq(1+\gamma \Delta
t)a^{2}+(1+\frac{1}{\gamma \Delta t})b^{2}$, we obtain
\begin{align*}
|\hat{P}_{t_{i}}|^{2}  &  \leq \left(  1+\gamma \Delta t\right)  \left \vert
\mathbb{E}_{t_{i}}^{N}[\hat{P}_{t_{i+1}}]\right \vert ^{2}+C(1+\frac{1}%
{\gamma \Delta t})\left(  \left \vert R_{p,i}\right \vert ^{2}+\left \vert \bar
{R}_{p,i}\right \vert ^{2}\right) \\
&  \text{ \  \  \ }+C(1+\frac{1}{\gamma \Delta t})\Delta t^{2}\left(  |\hat
{P}_{t_{i}}|^{2}+|\hat{Q}_{t_{i}}|^{2}+\left \vert \bar{\phi}_{i}(X_{t_{i}}%
^{N})-\phi_{i}^{N}(X_{t_{i}}^{N})\right \vert ^{2}\right)  ,\\
|\hat{Q}_{t_{i}}|^{2}  &  \leq \frac{C}{\Delta t}\left(  \mathbb{E}_{t_{i}}%
^{N}\left[  |\hat{P}_{t_{i+1}}|^{2}\right]  -\left \vert \mathbb{E}_{t_{i}}%
^{N}[\hat{P}_{t_{i+1}}]\right \vert ^{2}\right)  +\frac{C}{(\Delta t)^{2}%
}\left(  \left \vert R_{q,i}\right \vert ^{2}+\left \vert \bar{R}_{q,i}%
\right \vert ^{2}\right)  .
\end{align*}
By choosing $\gamma=2C^{2}$, $\gamma \Delta t\leq1$, and adding up the above
inequalities, we obtain
\begin{align*}
|\hat{P}_{t_{i}}|^{2}+\frac{\Delta t}{2C}|\hat{Q}_{t_{i}}|^{2}  &  \leq \left(
1+2C^{2}\Delta t\right)  \mathbb{E}_{t_{i}}^{N}\left[  |\hat{P}_{t_{i+1}}%
|^{2}\right]  +\frac{\Delta t}{2C}\left(  |\hat{P}_{t_{i}}|^{2}+\left \vert
\bar{\phi}_{i}(X_{t_{i}}^{N})-\phi_{i}^{N}(X_{t_{i}}^{N})\right \vert
^{2}\right) \\
&  \text{ \  \  \ }+\frac{1}{2C\Delta t}\left(  \left \vert R_{p,i}\right \vert
^{2}+\left \vert \bar{R}_{p,i}\right \vert ^{2}+\left \vert R_{q,i}\right \vert
^{2}+\left \vert \bar{R}_{q,i}\right \vert ^{2}\right)  ,
\end{align*}
which yields%
\begin{align}
|\hat{P}_{t_{i}}|^{2}+C\Delta t|\hat{Q}_{t_{i}}|^{2}  &  \leq \left(  1+C\Delta
t\right)  \mathbb{E}_{t_{i}}^{N}\left[  |\hat{P}_{t_{i+1}}|^{2}\right]
+C\Delta t\left \vert \bar{\phi}_{i}(X_{t_{i}}^{N})-\phi_{i}^{N}(X_{t_{i}}%
^{N})\right \vert ^{2}\label{48}\\
&  \text{ \  \  \ }+\frac{C}{\Delta t}\left(  \left \vert R_{p,i}\right \vert
^{2}+\left \vert \bar{R}_{p,i}\right \vert ^{2}+\left \vert R_{q,i}\right \vert
^{2}+\left \vert \bar{R}_{q,i}\right \vert ^{2}\right)  .\nonumber
\end{align}
Taking mathematical expectation on both sides of $\left(  \ref{48}\right)  ,$
we have%
\begin{align}
\mathbb{E}\left[  |\hat{P}_{t_{i}}|^{2}\right]  +C\Delta t\mathbb{E}\left[
|\hat{Q}_{t_{i}}|^{2}\right]   &  \leq \left(  1+C\Delta t\right)
\mathbb{E}\left[  |\hat{P}_{t_{i+1}}|^{2}\right]  +C\Delta t\mathbb{E}\left[
\left \vert \bar{\phi}_{i}(X_{t_{i}}^{N})-\phi_{i}^{N}(X_{t_{i}}^{N}%
)\right \vert ^{2}\right] \label{49}\\
&  \text{ \  \  \ }+\frac{C}{\Delta t}\mathbb{E}\left[  \left \vert
R_{p,i}\right \vert ^{2}+\left \vert \bar{R}_{p,i}\right \vert ^{2}+\left \vert
R_{q,i}\right \vert ^{2}+\left \vert \bar{R}_{q,i}\right \vert ^{2}\right]
,\nonumber
\end{align}
which, by induction, leads to the inequality%
\begin{align}
\mathbb{E}\left[  |\hat{P}_{t_{i}}|^{2}\right]   &  \leq C\mathbb{E}\left[
|\hat{P}_{t_{N}}|^{2}\right]  +C\Delta t\sum_{j=i}^{N-1}\mathbb{E}\left[
\left \vert \bar{\phi}_{j}(X_{t_{j}}^{N})-\phi_{j}^{N}(X_{t_{j}}^{N}%
)\right \vert ^{2}\right] \\
&  \text{ \  \  \ }+\frac{C}{\Delta t}\sum_{j=i}^{N-1}\mathbb{E}\left[
\left \vert R_{p,j}\right \vert ^{2}+\left \vert \bar{R}_{p,j}\right \vert
^{2}+\left \vert R_{q,j}\right \vert ^{2}+\left \vert \bar{R}_{q,j}\right \vert
^{2}\right]  .\nonumber
\end{align}
Based on $\left(  \ref{49}\right)  $, we can deduce
\begin{align}
C\Delta t\sum_{j=i}^{N-1}\mathbb{E}\left[  |\hat{Q}_{t_{j}}|^{2}\right]   &
\leq C\Delta t\sum_{j=i}^{N-1}\mathbb{E}\left[  |\hat{P}_{t_{j+1}}%
|^{2}\right]  +C\sum_{j=i}^{N-1}\mathbb{E}\left[  \left \vert \bar{\phi}%
_{j}(X_{t_{j}}^{N})-\phi_{j}^{N}(X_{t_{j}}^{N})\right \vert ^{2}\right]  \Delta
t\\
&  \text{ \  \  \ }+\frac{C}{\Delta t}\sum_{j=i}^{N-1}\mathbb{E}\left[
\left \vert R_{p,j}\right \vert ^{2}+\left \vert \bar{R}_{p,j}\right \vert
^{2}+\left \vert R_{q,j}\right \vert ^{2}+\left \vert \bar{R}_{q,j}\right \vert
^{2}\right] \nonumber \\
\text{ \  \  \  \  \ }  &  \leq C\mathbb{E}\left[  |\hat{P}_{t_{N}}|^{2}\right]
+C\sum_{j=i}^{N-1}\mathbb{E}\left[  \left \vert \bar{\phi}_{j}(X_{t_{j}}%
^{N})-\phi_{j}^{N}(X_{t_{j}}^{N})\right \vert ^{2}\right]  \Delta t\nonumber \\
&  \text{ \  \  \ }+\frac{C}{\Delta t}\sum_{j=i}^{N-1}\mathbb{E}\left[
\left \vert R_{p,j}\right \vert ^{2}+\left \vert \bar{R}_{p,j}\right \vert
^{2}+\left \vert R_{q,j}\right \vert ^{2}+\left \vert \bar{R}_{q,j}\right \vert
^{2}\right]  .\nonumber
\end{align}
Thus%
\begin{align}
\mathbb{E}\left[  |\hat{P}_{t_{i}}|^{2}\right]  +\Delta t\sum_{j=i}%
^{N-1}\mathbb{E}\left[  |\hat{Q}_{t_{j}}|^{2}\right]   &  \leq C\mathbb{E}%
\left[  |\hat{P}_{t_{N}}|^{2}\right]  +C\sum_{j=i}^{N-1}\mathbb{E}\left[
\left \vert \bar{\phi}_{j}(X_{t_{j}}^{N})-\phi_{j}^{N}(X_{t_{j}}^{N}%
)\right \vert ^{2}\right]  \Delta t\label{51}\\
&  \text{ \  \ }+\frac{C}{\Delta t}\sum_{j=i}^{N-1}\mathbb{E}\left[  \left \vert
R_{p,j}\right \vert ^{2}+\left \vert \bar{R}_{p,j}\right \vert ^{2}+\left \vert
R_{q,j}\right \vert ^{2}+\left \vert \bar{R}_{q,j}\right \vert ^{2}\right]
.\nonumber
\end{align}
Now we estimate the error terms $R_{p,j},R_{q,j},\bar{R}_{p,j}$ and $\bar
{R}_{q,j}$. For $\left[  t_{j},t_{j+1}\right]  ,j=N-1,\ldots,i$, under the
condition of Lemma \ref{lemma6}, similar to the variational method in Lemma
\ref{relation lemma}, the solutions of $\left(  \ref{29}\right)  $ on $\left[
t_{j},t_{j+1}\right]  $\ have the representations $\bar{P}_{t}^{t_{j},x}%
=m_{j}(t,\bar{X}_{t}^{t_{j},x})\in C_{b}^{2,4}\ $and $\bar{Q}_{t}^{t_{j}%
,x}=g_{j}(t,\bar{X}_{t}^{t_{j},x})\in C_{b}^{2,4}$. By It\^{o}'s formula, we
can deduce%
\begin{align*}
\left \vert \bar{R}_{p,j}\right \vert  &  =\left \vert \mathbb{E}_{t_{j}}%
^{N}\left[  m_{j}\left(  t_{j+1},\bar{X}_{t_{j+1}}^{t_{j},X_{t_{j}}^{N}%
}\right)  -m_{j}\left(  t_{j},X_{t_{j}}^{N}\right)  \right]  \right. \\
&  \text{ \  \  \ }-\left.  \mathbb{E}_{t_{j}}^{N}\left[  m_{j}\left(
t_{j+1},X_{t_{j+1}}^{N}\right)  -m_{j}\left(  t_{j},X_{t_{j}}^{N}\right)
\right]  \right \vert \\
&  =\left \vert \int_{t_{j}}^{t_{j+1}}\mathbb{E}_{t_{j}}^{N}\left[
\mathcal{L}m_{j}\left(  t_{j},X_{t_{j}}^{N}\right)  +\int_{t_{j}}%
^{t}\mathcal{LL}m_{j}\left(  s,\bar{X}_{s}^{t_{j},X_{t_{j}}^{N}}\right)
ds\right]  dt\right. \\
&  \text{ \  \  \ }-\left.  \int_{t_{j}}^{t_{j+1}}\mathbb{E}_{t_{j}}^{N}\left[
\mathcal{\tilde{L}}m_{j}\left(  t_{j},X_{t_{j}}^{N}\right)  +\int_{t_{j}}%
^{t}\mathcal{\tilde{L}\tilde{L}}m_{j}\left(  s,X_{s}^{N}\right)  ds\right]
dt\right \vert \\
&  \leq \mathbb{E}_{t_{j}}^{N}\left[  \left \vert \mathcal{L}m_{j}\left(
t_{j},X_{t_{j}}^{N}\right)  -\mathcal{\tilde{L}}m_{j}\left(  t_{j},X_{t_{j}%
}^{N}\right)  \right \vert \right]  \Delta t\\
&  \text{ \  \  \ }+C\int_{t_{j}}^{t_{j+1}}\int_{t_{j}}^{t}\left(
1+\mathbb{E}_{t_{j}}^{N}\left[  |X_{s}^{N}|^{4}+|X_{t_{j}}^{N}|^{4}\right]
\right)  dsdt,
\end{align*}
where
\begin{align*}
\mathcal{L}  &  =\frac{\partial}{\partial t}+\sum \limits_{k=1}^{n}b_{k}\left(
t,x,\bar{\phi}_{j}(X_{t_{j}}^{N})\right)  \frac{\partial}{\partial x_{k}%
}+\frac{1}{2}\sum \limits_{k,l=1}^{n}\left[  \sigma \sigma^{\top}\right]
_{k,l}\left(  t,x,\bar{\phi}_{j}(X_{t_{j}}^{N})\right)  \frac{\partial^{2}%
}{\partial x_{k}\partial x_{l}},\\
\mathcal{\tilde{L}}  &  =\frac{\partial}{\partial t}+\sum \limits_{k=1}%
^{n}b_{k}\left(  t,x,\phi_{j}^{N}(X_{t_{j}}^{N})\right)  \frac{\partial
}{\partial x_{k}}+\frac{1}{2}\sum \limits_{k,l=1}^{n}\left[  \sigma \sigma
^{\top}\right]  _{k,l}\left(  t,x,\phi_{j}^{N}(X_{t_{j}}^{N})\right)
\frac{\partial^{2}}{\partial x_{k}\partial x_{l}}.
\end{align*}
Then by Lemma \ref{SDE_Estimate}, we have%
\begin{equation}
\mathbb{E}\left[  \left \vert \bar{R}_{p,j}\right \vert ^{2}\right]  \leq
C\left(  \Delta t\right)  ^{2}\mathbb{E}\left[  \left \vert \bar{\phi}%
_{j}(X_{t_{j}}^{N})-\phi_{j}^{N}(X_{t_{j}}^{N})\right \vert ^{2}\right]
+C\left(  \Delta t\right)  ^{4}.
\end{equation}
Similarly,
\begin{equation}
\mathbb{E}\left[  \left \vert \bar{R}_{q,j}\right \vert ^{2}\right]  \leq
C\left(  \Delta t\right)  ^{2}\mathbb{E}\left[  \left \vert \bar{\phi}%
_{j}(X_{t_{j}}^{N})-\phi_{j}^{N}(X_{t_{j}}^{N})\right \vert ^{2}\right]
+C\left(  \Delta t\right)  ^{4}.
\end{equation}
In the same way above, we can obtain
\begin{equation}%
\begin{array}
[c]{ll}%
\mathbb{E}[\left \vert R_{p,j}\right \vert ^{2}]\leq C\left(  \Delta t\right)
^{4},\text{ \ } & \mathbb{E}[\left \vert R_{q,j}\right \vert ^{2}]\leq C\left(
\Delta t\right)  ^{4}.
\end{array}
\label{52}%
\end{equation}
Consequently, the desired conclusion follows from $\left(  \ref{51}\right)
-\left(  \ref{52}\right)  .$
\end{proof}
\end{lemma}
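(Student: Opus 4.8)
The plan is to set up a one-step backward recursion for the coupled mean-square errors of the adjoint pair and then close it by a discrete Gronwall argument, after which the whole estimate reduces to controlling two kinds of local truncation errors. Write $\hat P_{t_i}=\bar P_{t_i}^{t_i,X_{t_i}^N}-P_{t_i}^N$ and $\hat Q_{t_i}=\bar Q_{t_i}^{t_i,X_{t_i}^N}-Q_{t_i}^N$, and let $\hat{\mathcal H}_t$ be the corresponding difference of the two Hamiltonian gradients. First I would subtract the numerical backward equation in \eqref{23} from the one-step discrete Hamiltonian backward equation in \eqref{29} on each cell $[t_i,t_{i+1}]$; taking the conditional expectation $\mathbb{E}_{t_i}^N[\,\cdot\,]:=\mathbb{E}[\,\cdot\,|X_{t_i}^N]$ of the resulting identity, and separately multiplying it by $\Delta W_{t_{i+1}}$ before conditioning, freezes the driver at its left endpoint and yields two relations expressing $\hat P_{t_i}$ through $\mathbb{E}_{t_i}^N[\hat P_{t_{i+1}}]+\hat{\mathcal H}_{t_i}\Delta t$ and $\hat Q_{t_i}\Delta t$ through $\mathbb{E}_{t_i}^N[\hat P_{t_{i+1}}\Delta W_{t_{i+1}}]$, up to four remainders: the time-regularity errors of $\hat{\mathcal H}$ on the cell, and the flow-restart errors $\mathbb{E}_{t_i}^N[\bar P_{t_{i+1}}^{t_i,X_{t_i}^N}-\bar P_{t_{i+1}}^{t_{i+1},X_{t_{i+1}}^N}]$ (with and without the weight $\Delta W_{t_{i+1}}$).

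The engine of the recursion is the conditional-variance estimate $|\mathbb{E}_{t_i}^N[\hat P_{t_{i+1}}\Delta W_{t_{i+1}}]|^2\le\big(\mathbb{E}_{t_i}^N[|\hat P_{t_{i+1}}|^2]-|\mathbb{E}_{t_i}^N[\hat P_{t_{i+1}}]|^2\big)\Delta t$, obtained by centering $\hat P_{t_{i+1}}$ and applying Cauchy--Schwarz to the independent increment. Squaring the two relations, expanding with $(a+b)^2\le(1+\gamma\Delta t)a^2+(1+\tfrac{1}{\gamma\Delta t})b^2$, and adding the $\hat P$ and $\hat Q$ inequalities, the variance produced by $\hat Q_{t_i}$ is exactly absorbed by the factor $1+\gamma\Delta t$ in front of $|\mathbb{E}_{t_i}^N[\hat P_{t_{i+1}}]|^2$ once $\gamma$ is chosen large enough. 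This gives a one-step inequality of the form $\mathbb{E}[|\hat P_{t_i}|^2]+C\Delta t\,\mathbb{E}[|\hat Q_{t_i}|^2]\le(1+C\Delta t)\mathbb{E}[|\hat P_{t_{i+1}}|^2]+C\Delta t\,\mathbb{E}[|\bar\phi_i(X_{t_i}^N)-\phi_i^N(X_{t_i}^N)|^2]$ plus $\tfrac{C}{\Delta t}$ times the squared remainders, which I would iterate by discrete Gronwall over $j=i,\dots,N-1$ to bound simultaneously $\mathbb{E}[|\hat P_{t_i}|^2]$ and the accumulated $\Delta t\sum_j\mathbb{E}[|\hat Q_{t_j}|^2]$. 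The induction starts cleanly because the terminal error vanishes: $t_N=T$ forces $\bar P_{t_N}^{t_N,X_{t_N}^N}=h_x(X_{t_N}^N)=h_x(X_T^N)=P_{t_N}^N$, so $\mathbb{E}[|\hat P_{t_N}|^2]=0$.

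It remains to estimate the four remainders, and this is the main obstacle. The two time-regularity remainders are increments of $\hat{\mathcal H}$ over a cell; a single application of It\^o's formula on $[t_j,t]$ together with the moment bound of Lemma \ref{SDE_Estimate} gives them in mean square at order $C(\Delta t)^4$. The delicate terms are the two flow-restart errors, which I denote $\bar R_{p,j}$ and $\bar R_{q,j}$. My plan is to first record, via the Feynman--Kac regularity of Lemma \ref{F-K} and the stated high smoothness ($b,\sigma,f\in C_b^{2,5,5}$, $h\in C_b^{5+\alpha}$, feedbacks in $C_b^4$), the representations $\bar P_t^{t_j,x}=m_j(t,\bar X_t^{t_j,x})$ and $\bar Q_t^{t_j,x}=g_j(t,\bar X_t^{t_j,x})$ on $[t_j,t_{j+1}]$ with $m_j,g_j\in C_b^{2,4}$ and the matching identity $m_j(t_{j+1},\cdot)=\bar P_{t_{j+1}}^{t_{j+1},\cdot}$ (the $V_x^{j+1}$ relation of Lemma \ref{relation lemma}). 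With this, $\bar R_{p,j}$ becomes the difference of two one-step increments of the \emph{same} smooth function $m_j$, both started from $X_{t_j}^N$ at time $t_j$ but run under the two generators $\mathcal{L}$ (carrying the control $\bar\phi_j(X_{t_j}^N)$) and $\tilde{\mathcal L}$ (carrying $\phi_j^N(X_{t_j}^N)$). Expanding each increment by It\^o twice isolates the leading contribution $\mathbb{E}_{t_j}^N[(\mathcal{L}-\tilde{\mathcal L})m_j(t_j,X_{t_j}^N)]\Delta t$; since $\mathcal{L}-\tilde{\mathcal L}$ acts only through the gap between the two controls in $b$ and $\sigma$, this is $O(\Delta t\,|\bar\phi_j(X_{t_j}^N)-\phi_j^N(X_{t_j}^N)|)$, while the second-order (double-generator) remainder is $O((\Delta t)^2)$ times a polynomially growing factor again tamed by Lemma \ref{SDE_Estimate}. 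Squaring delivers $\mathbb{E}[|\bar R_{p,j}|^2]\le C(\Delta t)^2\mathbb{E}[|\bar\phi_j(X_{t_j}^N)-\phi_j^N(X_{t_j}^N)|^2]+C(\Delta t)^4$, and the same argument handles the $\Delta W$-weighted restart error $\bar R_{q,j}$.

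Substituting the four bounds into the Gronwall output finishes the proof: the $\tfrac{C}{\Delta t}\sum_j(\Delta t)^4$ contributions sum to $C\,N(\Delta t)^3=O((\Delta t)^2)$ because $N\Delta t=T$, whereas $\tfrac{C}{\Delta t}\sum_j(\Delta t)^2\mathbb{E}[|\bar\phi_j-\phi_j^N|^2]$ collapses to $C\Delta t\sum_j\mathbb{E}[|\bar\phi_j(X_{t_j}^N)-\phi_j^N(X_{t_j}^N)|^2]$, which is exactly the claimed right-hand side. I expect the flow-restart estimate to be the hard part: producing the first-order-in-$(\bar\phi_j-\phi_j^N)$ scaling together with an $O((\Delta t)^4)$ clean remainder hinges on the $C_b^{2,4}$ representations of the adjoint pair and a careful second-order It\^o expansion, and it is precisely this step that dictates the strong regularity hypotheses of the lemma.
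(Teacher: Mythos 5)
Your proposal is correct and follows essentially the same route as the paper's own proof: the same subtraction of \eqref{23} from \eqref{29}, the same decomposition into the two time-regularity remainders and the two flow-restart errors $\bar R_{p,j},\bar R_{q,j}$, the same conditional-variance bound \eqref{46} with the weighted-squaring inequality and choice of $\gamma$, the same backward induction to \eqref{51}, and the same Feynman--Kac representations $m_j,g_j\in C_b^{2,4}$ with the generator difference $\mathcal{L}-\tilde{\mathcal{L}}$ producing the $C(\Delta t)^2\mathbb{E}[|\bar\phi_j-\phi_j^N|^2]+C(\Delta t)^4$ bounds. The only (welcome) addition is that you make explicit the vanishing of the terminal error $\hat P_{t_N}=h_x(X_T^N)-h_x(X_T^N)=0$, which the paper uses implicitly when concluding from \eqref{51}--\eqref{52}.
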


%Let $(X_{t}^{N,t_{i},\alpha},P_{t}^{N,t_{i},\alpha},Q_{t}^{N,t_{i},\alpha})$
%be the solution of $\left(  \ref{23}\right)  $ starting from time $t_{i}$ with
%$X_{t_{i}}^{N}=\alpha$, for $t\in \lbrack t_{i},T]$. Let us denote
%$b(t,X_{t}^{N,t_{i},\alpha},\phi_{i}^{N}(\alpha))$ by $b_{t}^{N,t_{i},\alpha}%
%$, $\sigma(t,X_{t}^{N,t_{i},\alpha}$, $\phi_{i}^{N}(\alpha))$ by $\sigma
%_{t}^{N,t_{i},\alpha}$ and let
%\[
%\mathcal{H}_{t}^{N,t_{i},\alpha}=H_{x}\left(  t,X_{t}^{N,t_{i},\alpha}%
%,P_{t}^{N,t_{i},\alpha},Q_{t}^{N,t_{i},\alpha},\phi_{i}^{N}(\alpha)\right)  .
%\]
%Then we can see, for $i=0,1,\ldots,N-1,$
%\begin{equation}
%\left \{
%\begin{array}
%[c]{l}%
%\displaystyle X_{t_{i+1}}^{N,t_{i},X_{i}^{N}}=X_{i}^{N}+\int_{t_{i}}^{t_{i+1}%
%}b_{t}^{N,t_{i},X_{i}^{N}}dt+\int_{t_{i}}^{t_{i+1}}\sigma_{t}^{N,t_{i}%
%,X_{i}^{N}}dW_{t},\\
%\displaystyle P_{t_{i}}^{N,t_{i},X_{i}^{N}}=P_{t_{i+1}}^{N,t_{i},X_{i}^{N}%
%}+\int_{t_{i}}^{t_{i+1}}{H}_{t}^{N,t_{i},X_{i}^{N}}dt-\int_{t_{i}}^{t_{i+1}%
%}Q_{t}^{N,t_{i},X_{i}^{N}}dW_{t}.
%\end{array}
%\right.  \label{FBSDE}%
%\end{equation}
Now we discuss the error produced by the numerical solution of FBSDEs in
Schemes \ref{Scheme1}$-$\ref{Scheme2}. It is easy to check that solving the
FBSDEs $\left(  \ref{23}\right)  $\ is equivalent to finding the solution to
the following equations:%
\begin{align}
P_{t_{i}}^{N}  &  =\mathbb{E}_{t_{i}}\left[  P_{t_{i+1}}^{N}\right]
+\mathcal{H}_{t_{i}}^{N}\Delta t+\mathcal{E}_{P,i},\label{P}\\
Q_{t_{i}}^{N}  &  =\left(  \mathbb{E}_{t_{i}}\left[  P_{t_{i+1}}^{N}\Delta
W_{t_{i+1}}\right]  +\mathcal{E}_{Q,i}\right)  /\Delta t, \label{Q}%
\end{align}
where $\mathbb{E}_{t_{i}}\left[  \cdot \right]  =\mathbb{E}\left[  \left.
\cdot \right \vert \mathcal{F}_{t_{i}}\right]  $ and the truncation errors%
\[%
\begin{array}
[c]{l}%
\displaystyle \mathcal{E}_{P,i}=\int_{t_{i}}^{t_{i+1}}\mathbb{E}_{t_{i}%
}\left[  \mathcal{H}_{t}^{N}\right]  dt-\mathcal{H}_{t_{i}}^{N}\Delta t,\\
\displaystyle \mathcal{E}_{Q,i}=\int_{t_{i}}^{t_{i+1}}\mathbb{E}_{t_{i}%
}\left[  \mathcal{H}_{t}^{N}\Delta W_{t_{i+1}}\right]  dt-\int_{t_{i}%
}^{t_{i+1}}\mathbb{E}_{t_{i}}\left[  Q_{t}^{N}\right]  dt+Q_{t_{i}}^{N}\Delta
t.
\end{array}
\]

\begin{lemma}
\label{lemma5}Suppose $\left(  A1\right)  $ and the conditions in Lemma
\ref{lemma6} hold. Let $(P_{i}^{N,t_{i},x},Q_{i}^{N,t_{i},x})$ be the
numerical solution at grid point $\left(  t_{i},x\right)  $. We assume that
$\{P_{i}^{N}\}_{i=1}^{N-1}\in C_{b}^{4}$. Then for $i=0,1,\ldots,N-1$,%
\[
\mathbb{E}\left[  \left \vert P_{t_{i}}^{N}-P_{i}^{N,t_{i},X_{t_{i}}^{N}%
}\right \vert ^{2}\right]  +\Delta t\sum_{j=i}^{N-1}\mathbb{E}\left[
\left \vert Q_{t_{j}}^{N}-Q_{j}^{N,t_{j},X_{t_{j}}^{N}}\right \vert ^{2}\right]
\leq C\left(  \Delta t\right)  ^{2}.
\]

\end{lemma}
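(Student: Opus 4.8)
The plan is to mirror, almost verbatim, the argument already used for Lemma \ref{lemma6}, the essential simplification being that here the two processes being compared share the \emph{same} control $\phi_{i}^{N}$, so no control-difference term survives in the recursion. The starting point is the observation recorded in $\left(  \ref{P}\right)  -\left(  \ref{Q}\right)  $ that the continuous solution $(P_{t}^{N},Q_{t}^{N})$ of $\left(  \ref{23}\right)  $ satisfies the discrete relations with the truncation errors $\mathcal{E}_{P,i}$ and $\mathcal{E}_{Q,i}$, whereas the numerical solution $(P_{i}^{N},Q_{i}^{N})$ satisfies the same relations $\left(  \ref{Q^N}\right)  -\left(  \ref{P^N}\right)  $ but with those error terms removed. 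Subtracting, and writing $\hat{P}_{t_{i}}=P_{t_{i}}^{N}-P_{i}^{N,t_{i},X_{t_{i}}^{N}}$ and $\hat{Q}_{t_{i}}=Q_{t_{i}}^{N}-Q_{i}^{N,t_{i},X_{t_{i}}^{N}}$, I would obtain a backward recursion for $(\hat{P},\hat{Q})$ driven purely by $\mathcal{E}_{P,i}$ and $\mathcal{E}_{Q,i}$.

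From here the estimate proceeds exactly as in Lemma \ref{lemma6}. Using the boundedness of $H_{x}$ guaranteed by $\left(  A1\right)  $ together with the $C_{b}^{4}$-regularity of $\{P_{i}^{N}\}$, I would apply H\"{o}lder's inequality to the martingale-increment term (the analog of $\left(  \ref{46}\right)  $), then the elementary inequality $(a+b)^{2}\leq(1+\gamma \Delta t)a^{2}+(1+\frac{1}{\gamma \Delta t})b^{2}$, and a discrete Gronwall argument, to arrive at
\[
\mathbb{E}\left[  |\hat{P}_{t_{i}}|^{2}\right]  +\Delta t\sum_{j=i}^{N-1}\mathbb{E}\left[  |\hat{Q}_{t_{j}}|^{2}\right]  \leq C\mathbb{E}\left[  |\hat{P}_{t_{N}}|^{2}\right]  +\frac{C}{\Delta t}\sum_{j=i}^{N-1}\mathbb{E}\left[  |\mathcal{E}_{P,j}|^{2}+|\mathcal{E}_{Q,j}|^{2}\right]  .
\]
The terminal term vanishes: since $P_{T}^{N}=h_{x}(X_{T}^{N})$ and the scheme initializes $P_{N}^{N}=h_{x}$, one has $\hat{P}_{t_{N}}=0$.

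It then remains to control the truncation errors, and this is the technical crux. Following the representation step in Lemma \ref{lemma6}, I would invoke the Feynman-Kac formula (Lemma \ref{F-K}) to write, on each subinterval $\left[  t_{j},t_{j+1}\right]  $, $P_{t}^{N}=m_{j}(t,X_{t}^{N})$ and $Q_{t}^{N}=g_{j}(t,X_{t}^{N})$ with $m_{j},g_{j}\in C_{b}^{2,4}$, so that $\mathcal{H}_{t}^{N}$ becomes a smooth function of $(t,X_{t}^{N})$. Applying It\^{o}'s formula to the increments $\mathcal{H}_{t}^{N}-\mathcal{H}_{t_{i}}^{N}$ and to $Q_{t}^{N}$, and using the moment bound of Lemma \ref{SDE_Estimate}, each error term acquires two factors of the time step, giving $\mathbb{E}[|\mathcal{E}_{P,j}|^{2}]\leq C(\Delta t)^{4}$ and $\mathbb{E}[|\mathcal{E}_{Q,j}|^{2}]\leq C(\Delta t)^{4}$. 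The $\mathcal{E}_{Q,j}$ bound is the delicate one: it carries the extra $\Delta W_{t_{i+1}}$ factor and combines the $\mathcal{H}_{t}^{N}\Delta W$ term with the $\mathbb{E}_{t_{i}}[Q_{t}^{N}]$ averaging term, so that the smoothness of $g_{j}$ (the path regularity of the $Q$-component) is exactly what cancels the leading order and leaves a second-order local error. Substituting these estimates into the recursion, the prefactor $C/\Delta t$ acting on the $N=T/\Delta t$ terms of size $(\Delta t)^{4}$ produces precisely $C(\Delta t)^{2}$, which is the claimed bound.
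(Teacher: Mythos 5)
You should first know that the paper does not actually prove Lemma \ref{lemma5}: its ``proof'' is a one-line citation to \cite{ZhZhJu2014,GLTZhZh2017}, so your proposal is effectively measured against the standard argument in those references. Your sketch reconstructs most of it correctly: the backward error recursion with the $(a+b)^{2}\leq(1+\gamma\Delta t)a^{2}+(1+\frac{1}{\gamma\Delta t})b^{2}$ trick and discrete Gronwall, the vanishing terminal error $\hat{P}_{t_{N}}=0$ (since $P_{T}^{N}=h_{x}(X_{T}^{N})$ and $P_{N}^{N}=h_{x}$), the piecewise Feynman--Kac representations $P_{t}^{N}=m_{j}(t,X_{t}^{N})$, $Q_{t}^{N}=g_{j}(t,X_{t}^{N})$ making $\mathcal{H}_{t}^{N}$ a smooth function of $(t,X_{t}^{N})$, and the local bounds $\mathbb{E}[|\mathcal{E}_{P,j}|^{2}]+\mathbb{E}[|\mathcal{E}_{Q,j}|^{2}]\leq C(\Delta t)^{4}$ which, through the $C/\Delta t$ prefactor and the $N=T/\Delta t$ terms, yield the global $(\Delta t)^{2}$ bound. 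You are also right that, in contrast to Lemma \ref{lemma6}, no control-difference term appears, since both objects use the same $\phi_{i}^{N}$.

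There is, however, one genuine gap: your claim that subtracting the scheme $\left(\ref{Q^N}\right)$--$\left(\ref{P^N}\right)$ from $\left(\ref{P}\right)$--$\left(\ref{Q}\right)$ yields a recursion ``driven purely by $\mathcal{E}_{P,i}$ and $\mathcal{E}_{Q,i}$'' is not correct, because the two sets of relations do not contain the same conditional expectations. In $\left(\ref{P}\right)$--$\left(\ref{Q}\right)$, $P_{t_{i+1}}^{N}$ is evaluated along the exact state flow of $\left(\ref{23}\right)$, whereas in the scheme the paper states explicitly that $P_{i+1}^{N}$ is ``the value at space point $X_{i+1}^{N,t_{i},x}$'', i.e., at the one-step Euler point from $\left(\ref{X^N}\right)$. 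Subtraction therefore leaves, besides the truncation errors, the forward weak-error terms $\mathbb{E}_{t_{i}}^{N}\left[P_{i+1}^{N}(X_{t_{i+1}}^{N,t_{i},x})-P_{i+1}^{N}(X_{i+1}^{N,t_{i},x})\right]$ and its $\Delta W_{t_{i+1}}$-weighted analogue --- the counterparts of $\bar{R}_{p,i}$ and $\bar{R}_{q,i}$ in Lemma \ref{lemma6}. These must be shown to be $O((\Delta t)^{2})$ pointwise (hence $O((\Delta t)^{4})$ in mean square) via an It\^{o}--Taylor expansion of $P_{i+1}^{N}$ along both the exact and the Euler flow, and this is precisely where the hypothesis $\{P_{i}^{N}\}_{i=1}^{N-1}\in C_{b}^{4}$ is consumed; tellingly, in your sketch that hypothesis is invoked only vaguely in the stability step, where what is actually needed is the Lipschitz continuity of $H_{x}$ in $(p,q)$ (boundedness of $b_{x},\sigma_{x}$ from $(A1)$), not ``boundedness of $H_{x}$''. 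Indeed, if your purely-truncation-driven recursion were correct, the $C_{b}^{4}$ assumption on $P_{i}^{N}$ would be superfluous, which signals the omission. Once these one-step weak-error terms are inserted and estimated, the recursion closes exactly as in Lemma \ref{lemma6} and the stated bound follows, so the repair is routine but necessary.
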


\begin{proof}
The proof of lemma can be referred to \cite{ZhZhJu2014,GLTZhZh2017}.
\end{proof}

Based on the above discussion, we have the following lemma.

\begin{lemma}
\label{lemma4}Suppose $\left(  A2\right)  $ and the conditions in Lemmas
\ref{lemma6}$-$\ref{lemma5} hold. Then for $i=0,1,\ldots,N-1,$
\[
\Delta t\sum_{j=i}^{N-1}\mathbb{E}\left[  \left \vert \bar{\phi}_{j}(X_{t_{j}%
}^{N})-\phi_{j}^{N}(X_{t_{j}}^{N})\right \vert ^{2}\right]  \leq C\left(
\Delta t\right)  ^{2}.
\]

\begin{proof}
Provided that $X_{t_{i}}^{N}=x$, from $\left(  \ref{Theorem 1.2}\right)  $ we
know
\[
\left \langle \bar{\phi}_{i}(x)-\frac{\rho}{\Delta t}\int_{t_{i}}^{t_{i+1}%
}\mathbb{E}\left[  H_{u}\left(  t,\bar{X}_{t}^{t_{i},x},\bar{P}_{t}^{t_{i}%
,x},\bar{Q}_{t}^{t_{i},x},\bar{\phi}_{i}(x)\right)  \right]  dt-\bar{\phi}%
_{i}(x),\phi_{i}(x)-\bar{\phi}_{i}(x)\right \rangle \leq0\text{,}%
\]
for any $\phi_{i}(\cdot)\in C_{b}^{1}(\mathbb{R}^{n};U)$ and $\rho>0$, which
implies
\begin{equation}
\bar{\phi}_{i}(x)=P_{U}\left(  \bar{\phi}_{i}(x)-\frac{\rho}{\Delta t}%
\int_{t_{i}}^{t_{i+1}}\mathbb{E}\left[  H_{u}\left(  t,\bar{X}_{t}^{t_{i}%
,x},\bar{P}_{t}^{t_{i},x},\bar{Q}_{t}^{t_{i},x},\bar{\phi}_{i}(x)\right)
\right]  dt\right)  , \label{35}%
\end{equation}
where $P_{U}\ $is the projection operator from $\mathbb{R}^{n}$ to $U$, such
that%
\[
P_{U}\left(  v\right)  =\arg \min_{u\in U}\left \vert u-v\right \vert ^{2}.
\]
Analogously, from $\left(  \ref{fai^N}\right)  $, we have
\begin{equation}
\phi_{i}^{N}(x)=P_{U}\left(  \phi_{i}^{N}(x)-\rho H_{u}\left(  t_{i}%
,x,P_{i}^{N,t_{i},x},Q_{i}^{N,t_{i},x},\phi_{i}^{N}(x)\right)  \right)
,\text{ }\rho>0. \label{36}%
\end{equation}
For convenience, we omit the superscript $^{t_{i},x}$ if no ambiguity arises.
From $\left(  \ref{35}\right)  -\left(  \ref{36}\right)  $, it is easy to
obtain
\begin{align}
&  \; \; \; \; \left \vert \bar{\phi}_{i}(x)-\phi_{i}^{N}(x)\right \vert
\label{39}\\
&  \leq \left \vert \bar{\phi}_{i}(x)-\phi_{i}^{N}(x)-\rho \left[  H_{u}\left(
t_{i},x,P_{t_{i}}^{N},Q_{t_{i}}^{N},\phi_{i}^{N}(x)\right)  -H_{u}\left(
t_{i},x,P_{i}^{N},Q_{i}^{N},\phi_{i}^{N}(x)\right)  \right]  \right.
\nonumber \\
&  \text{\  \  \ }-\left.  \rho \left[  H_{u}\left(  t_{i},x,\bar{P}_{t_{i}}%
,\bar{Q}_{t_{i}},\bar{\phi}_{i}(x)\right)  -H_{u}\left(  t_{i},x,P_{t_{i}}%
^{N},Q_{t_{i}}^{N},\phi_{i}^{N}(x)\right)  \right]  -\rho R_{H}^{i}\right \vert
,\nonumber
\end{align}
where%
\[
R_{H}^{i}=\frac{1}{\Delta t}\int_{t_{i}}^{t_{i+1}}\mathbb{E}\left[
H_{u}\left(  t,\bar{X}_{t},\bar{P}_{t},\bar{Q}_{t},\bar{\phi}_{i}(x)\right)
-H_{u}\left(  t_{i},x,\bar{P}_{t_{i}},\bar{Q}_{t_{i}},\bar{\phi}%
_{i}(x)\right)  \right]  dt.
\]
Then, we have%
\begin{align}
&  \; \; \; \; \left \vert \bar{\phi}_{i}(x)-\phi_{i}^{N}(x)\right \vert
^{2}\label{37}\\
&  \leq \left \vert \bar{\phi}_{i}(x)-\phi_{i}^{N}(x)\right \vert ^{2}+\rho
^{2}\left \vert R_{H}^{i}\right \vert ^{2}-2\rho \left \langle \bar{\phi}%
_{i}(x)-\phi_{i}^{N}(x),R_{H}^{i}\right \rangle \nonumber \\
&  \text{ \  \ }+\rho^{2}\left \vert H_{u}\left(  t_{i},x,\bar{P}_{t_{i}}%
,\bar{Q}_{t_{i}},\bar{\phi}_{i}(x)\right)  -H_{u}\left(  t_{i},x,P_{t_{i}}%
^{N},Q_{t_{i}}^{N},\phi_{i}^{N}(x)\right)  \right \vert ^{2}\text{
\  \  \  \  \  \  \  \  \  \  \ }\nonumber \\
&  \text{ \  \ }+\rho^{2}\left \vert H_{u}\left(  t_{i},x,P_{t_{i}}^{N}%
,Q_{t_{i}}^{N},\phi_{i}^{N}(x)\right)  -H_{u}\left(  t_{i},x,P_{i}^{N}%
,Q_{i}^{N},\phi_{i}^{N}(x)\right)  \right \vert ^{2}\nonumber
\end{align}%
\begin{align*}
&  \text{ \  \ }-2\rho \left \langle H_{u}\left(  t_{i},x,\bar{P}_{t_{i}},\bar
{Q}_{t_{i}},\bar{\phi}_{i}(x)\right)  -H_{u}\left(  t_{i},x,P_{t_{i}}%
^{N},Q_{t_{i}}^{N},\phi_{i}^{N}(x)\right)  ,\bar{\phi}_{i}(x)-\phi_{i}%
^{N}(x)\right \rangle \\
&  \text{ \  \ }-2\rho \left \langle H_{u}\left(  t_{i},x,P_{t_{i}}^{N},Q_{t_{i}%
}^{N},\phi_{i}^{N}(x)\right)  -H_{u}\left(  t_{i},x,P_{i}^{N},Q_{i}^{N}%
,\phi_{i}^{N}(x)\right)  ,\bar{\phi}_{i}(x)-\phi_{i}^{N}(x)\right \rangle \\
&  \text{ \  \ }+2\rho^{2}\left \langle H_{u}\left(  t_{i},x,\bar{P}_{t_{i}%
},\bar{Q}_{t_{i}},\bar{\phi}_{i}(x)\right)  -H_{u}\left(  t_{i},x,P_{t_{i}%
}^{N},Q_{t_{i}}^{N},\phi_{i}^{N}(x)\right)  \right.  ,\\
&  \text{ \  \  \  \  \  \  \  \  \ }\left.  H_{u}\left(  t_{i},x,P_{t_{i}}%
^{N},Q_{t_{i}}^{N},\phi_{i}^{N}(x)\right)  -H_{u}\left(  t_{i},x,P_{i}%
^{N},Q_{i}^{N},\phi_{i}^{N}(x)\right)  \right \rangle \\
&  \text{ \  \ }+2\rho^{2}\left \langle H_{u}\left(  t_{i},x,\bar{P}_{t_{i}%
},\bar{Q}_{t_{i}},\bar{\phi}_{i}(x)\right)  -H_{u}\left(  t_{i},x,P_{t_{i}%
}^{N},Q_{t_{i}}^{N},\phi_{i}^{N}(x)\right)  ,R_{H}^{i}\right \rangle \\
&  \text{ \  \ }+2\rho^{2}\left \langle H_{u}\left(  t_{i},x,P_{t_{i}}%
^{N},Q_{t_{i}}^{N},\phi_{i}^{N}(x)\right)  -H_{u}\left(  t_{i},x,P_{i}%
^{N},Q_{i}^{N},\phi_{i}^{N}(x)\right)  ,R_{H}^{i}\right \rangle .
\end{align*}
Notice that
\begin{equation}
\text{ \  \  \ }%
\begin{array}
[c]{l}%
\text{ \ }-2\rho \left \langle H_{u}\left(  t_{i},x,P_{t_{i}}^{N},Q_{t_{i}}%
^{N},\phi_{i}^{N}(x)\right)  -H_{u}\left(  t_{i},x,P_{i}^{N},Q_{i}^{N}%
,\phi_{i}^{N}(x)\right)  ,\bar{\phi}_{i}(x)-\phi_{i}^{N}(x)\right \rangle \\
\leq \rho^{2}\left \vert H_{u}\left(  t_{i},x,P_{t_{i}}^{N},Q_{t_{i}}^{N}%
,\phi_{i}^{N}(x)\right)  -H_{u}\left(  t_{i},x,P_{i}^{N},Q_{i}^{N},\phi
_{i}^{N}(x)\right)  \right \vert ^{2}+\left \vert \bar{\phi}_{i}(x)-\phi_{i}%
^{N}(x)\right \vert ^{2}.
\end{array}
\label{38}%
\end{equation}
Similarly,%
\begin{equation}%
\begin{array}
[c]{l}%
\text{ \ }2\rho^{2}\left \langle H_{u}\left(  t_{i},x,\bar{P}_{t_{i}},\bar
{Q}_{t_{i}},\bar{\phi}_{i}(x)\right)  -H_{u}\left(  t_{i},x,P_{t_{i}}%
^{N},Q_{t_{i}}^{N},\phi_{i}^{N}(x)\right)  ,R_{H}^{i}\right \rangle \\
\leq \rho^{2}\left \vert H_{u}\left(  t_{i},x,\bar{P}_{t_{i}},\bar{Q}_{t_{i}%
},\bar{\phi}_{i}(x)\right)  -H_{u}(t_{i},x,P_{t_{i}}^{N},Q_{t_{i}}^{N}%
,\phi_{i}^{N}(x))\right \vert ^{2}+\rho^{2}\left \vert R_{H}^{i}\right \vert
^{2},
\end{array}
\text{ \  \ }%
\end{equation}%
\begin{equation}%
\begin{array}
[c]{l}%
\text{ \ }2\rho^{2}\left \langle H_{u}\left(  t_{i},x,P_{t_{i}}^{N},Q_{t_{i}%
}^{N},\phi_{i}^{N}(x)\right)  -H_{u}\left(  t_{i},x,P_{i}^{N},Q_{i}^{N}%
,\phi_{i}^{N}(x)\right)  ,R_{H}^{i}\right \rangle \\
\leq \rho^{2}\left \vert H_{u}\left(  t_{i},x,P_{t_{i}}^{N},Q_{t_{i}}^{N}%
,\phi_{i}^{N}(x)\right)  -H_{u}\left(  t_{i},x,P_{i}^{N},Q_{i}^{N},\phi
_{i}^{N}(x)\right)  \right \vert ^{2}+\rho^{2}\left \vert R_{H}^{i}\right \vert
^{2},
\end{array}
\end{equation}%
\begin{equation}%
\begin{array}
[c]{l}%
\text{\  \ }2\rho^{2}\left \langle H_{u}\left(  t_{i},x,\bar{P}_{t_{i}},\bar
{Q}_{t_{i}},\bar{\phi}_{i}(x)\right)  -H_{u}\left(  t_{i},x,P_{t_{i}}%
^{N},Q_{t_{i}}^{N},\phi_{i}^{N}(x)\right)  \right.  ,\\
\text{ \  \  \  \  \  \ }\left.  H_{u}\left(  t_{i},x,P_{t_{i}}^{N},Q_{t_{i}}%
^{N},\phi_{i}^{N}(x)\right)  -H_{u}\left(  t_{i},x,P_{i}^{N},Q_{i}^{N}%
,\phi_{i}^{N}(x)\right)  \right \rangle \\
\leq \rho^{2}\left \vert H_{u}\left(  t_{i},x,\bar{P}_{t_{i}},\bar{Q}_{t_{i}%
},\bar{\phi}_{i}(x)\right)  -H_{u}\left(  t_{i},x,P_{t_{i}}^{N},Q_{t_{i}}%
^{N},\phi_{i}^{N}(x)\right)  \right \vert ^{2}\\
\text{ \ }+\rho^{2}\left \vert H_{u}\left(  t_{i},x,P_{t_{i}}^{N},Q_{t_{i}}%
^{N},\phi_{i}^{N}(x)\right)  -H_{u}\left(  t_{i},x,P_{i}^{N},Q_{i}^{N}%
,\phi_{i}^{N}(x)\right)  \right \vert ^{2},
\end{array}
\text{ \  \  \  \  \  \  \  \  \  \ }%
\end{equation}
and%
\begin{equation}
-2\rho \left \langle \bar{\phi}_{i}(x)-\phi_{i}^{N}(x),R_{H}\right \rangle
\leq \rho^{2}\left \vert \bar{\phi}_{i}(x)-\phi_{i}^{N}(x)\right \vert
^{2}+\left \vert R_{H}^{i}\right \vert ^{2}.\text{
\  \  \  \  \  \  \  \  \  \  \  \  \  \  \  \  \ } \label{40}%
\end{equation}
From $\left(  \ref{37}\right)  -\left(  \ref{40}\right)  $ and assumption
$\left(  A2\right)  $, we have%
\begin{align}
\left \vert \bar{\phi}_{i}(x)-\phi_{i}^{N}(x)\right \vert ^{2}  &  \leq \left(
1-2c_{0}\rho+8C\rho^{2}\right)  \left \vert \bar{\phi}_{i}(x)-\phi_{i}%
^{N}(x)\right \vert ^{2}\label{42}\\
&  \text{ \  \ }+2C\left(  1+4\rho^{2}\right)  \left(  \left \vert P_{t_{i}}%
^{N}-P_{i}^{N}\right \vert ^{2}+\left \vert Q_{t_{i}}^{N}-Q_{i}^{N}\right \vert
^{2}\right) \nonumber \\
&  \text{ \  \ }+9C\rho^{2}\left(  \left \vert \bar{P}_{t_{i}}-P_{t_{i}}%
^{N}\right \vert ^{2}+\left \vert \bar{Q}_{t_{i}}-Q_{t_{i}}^{N}\right \vert
^{2}\right) \nonumber \\
&  \text{ \  \ }+\left(  1+3\rho^{2}\right)  \left \vert R_{H}^{i}\right \vert
^{2}.\nonumber
\end{align}
Choosing sufficiently small $\rho$ in $\left(  \ref{42}\right)  $, such that
$2c_{0}\rho-8C\rho^{2}\geq c_{0}\rho/2$, we obtain%
\begin{align}
\left \vert \bar{\phi}_{i}(x)-\phi_{i}^{N}(x)\right \vert ^{2}  &  \leq
\frac{C\rho}{c_{0}}\left(  \left \vert \bar{P}_{t_{i}}-P_{t_{i}}^{N}\right \vert
^{2}+\left \vert \bar{Q}_{t_{i}}-Q_{t_{i}}^{N}\right \vert ^{2}\right)  \text{
\  \  \  \  \  \  \  \  \  \  \ }\label{43}\\
&  \text{ \  \ }+\frac{C}{c_{0}\rho}\left(  \left \vert P_{t_{i}}^{N}-P_{i}%
^{N}\right \vert ^{2}+\left \vert Q_{t_{i}}^{N}-Q_{i}^{N}\right \vert ^{2}\right)
\nonumber \\
&  \text{ \  \ }+\frac{C}{c_{0}\rho}\left \vert R_{H}^{i}\right \vert
^{2}.\nonumber
\end{align}
Then we can deduce%
\begin{align*}
\Delta t\sum_{j=i}^{N-1}\mathbb{E}\left[  \left \vert \bar{\phi}_{j}(X_{t_{j}%
}^{N})-\phi_{j}^{N}(X_{t_{j}}^{N})\right \vert ^{2}\right]   &  \leq \frac
{C}{c_{0}\rho}\Delta t\sum_{j=i}^{N-1}\mathbb{E}\left[  \left \vert P_{t_{j}%
}^{N}-P_{j}^{N,t_{j},X_{t_{j}}^{N}}\right \vert ^{2}+\left \vert Q_{t_{j}}%
^{N}-Q_{j}^{N,t_{j},X_{t_{j}}^{N}}\right \vert ^{2}\right] \\
&  \text{ \  \ }+\frac{C\rho}{c_{0}}\Delta t\sum_{j=i}^{N-1}\mathbb{E}\left[
\left \vert \bar{P}_{t_{j}}^{t_{j},X_{t_{j}}^{N}}-P_{t_{j}}^{N}\right \vert
^{2}+\left \vert \bar{Q}_{t_{j}}^{t_{j},X_{t_{j}}^{N}}\,-Q_{t_{j}}%
^{N}\right \vert ^{2}\right]  \text{ \  \  \  \  \  \ }\\
&  \text{ \  \ }+\frac{C}{c_{0}\rho}\Delta t\sum_{j=i}^{N-1}\mathbb{E}\left[
\left \vert R_{H}^{i}\right \vert ^{2}\right]  .
\end{align*}
By It\^{o}'s formula, it is easy to check $\mathbb{E}\left[  |R_{H}^{i}%
|^{2}\right]  \leq C\left(  \Delta t\right)  ^{2}$. Then, by Lemmas
\ref{lemma6} and \ref{lemma5}, we have
\begin{equation}
\Delta t\sum_{j=i}^{N-1}\mathbb{E}\left[  \left \vert \bar{\phi}_{j}(X_{t_{j}%
}^{N})-\phi_{j}^{N}(X_{t_{j}}^{N})\right \vert ^{2}\right]  \leq \frac{C\rho
}{c_{0}}\Delta t\sum_{j=i}^{N-1}\mathbb{E}\left[  \left \vert \bar{\phi}%
_{j}(X_{t_{j}}^{N})-\phi_{j}^{N}(X_{t_{j}}^{N})\right \vert ^{2}\right]
+\frac{C}{c_{0}\rho}\left(  1+\rho^{2}\right)  \left(  \Delta t\right)
^{2}.\nonumber
\end{equation}
Further choosing the constant $\rho$, such that $C\rho/c_{0}\leq1/2$, the
desired result follows.
\end{proof}
\end{lemma}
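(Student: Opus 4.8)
The plan is to reduce the $L^2$-in-time feedback mismatch to a pointwise (in the grid point $(t_i,x)$) contraction-type inequality, sum it over the time grid, and then close the estimate by feeding in the two adjoint error bounds of Lemmas \ref{lemma6}--\ref{lemma5}. The engine of the argument is the characterization of both feedback functions as fixed points of the Euclidean projection $P_U$ onto $U$, combined with the uniform monotonicity assumption $(A2)$.

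First I would rewrite the two optimality conditions as projection identities. From the discrete maximum principle $(\ref{Theorem 1.2})$ the exact discrete feedback satisfies $\bar\phi_i(x)=P_U\left(\bar\phi_i(x)-\tfrac{\rho}{\Delta t}\int_{t_i}^{t_{i+1}}\mathbb{E}[H_u(t,\bar X_t^{t_i,x},\bar P_t^{t_i,x},\bar Q_t^{t_i,x},\bar\phi_i(x))]\,dt\right)$ for every $\rho>0$, while the scheme's condition $(\ref{fai^N})$ gives $\phi_i^N(x)=P_U\left(\phi_i^N(x)-\rho H_u(t_i,x,P_i^N,Q_i^N,\phi_i^N(x))\right)$. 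Since $P_U$ is non-expansive, subtracting these identities and squaring yields a bound for $|\bar\phi_i(x)-\phi_i^N(x)|^2$ in which the $H_u$-difference is split into three pieces: the discrete-level adjoint discrepancy $(\bar P_{t_i}-P_{t_i}^N,\bar Q_{t_i}-Q_{t_i}^N)$, the FBSDE discretization error $(P_{t_i}^N-P_i^N,Q_{t_i}^N-Q_i^N)$, and a time-averaging residual $R_H^i$ comparing the $\Delta t$-averaged $H_u$ along the exact FBSDE to its left-endpoint value. Here $(A2)$ is essential: it produces a negative term $-2c_0\rho|\bar\phi_i-\phi_i^N|^2$ after expansion, so that estimating the remaining cross terms by Young's inequality and choosing $\rho$ so small that $2c_0\rho-8C\rho^2\ge c_0\rho/2$ absorbs the leading term, giving the pointwise inequality $(\ref{43})$.

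Next I would sum $(\ref{43})$ over $j=i,\ldots,N-1$, multiply by $\Delta t$, and insert the three estimates. The residual is controlled directly: applying It\^o's formula to the integrand defining $R_H^i$ and using the moment bound of Lemma \ref{SDE_Estimate} gives $\mathbb{E}[|R_H^i|^2]\le C(\Delta t)^2$, so its time-sum is $O((\Delta t)^2)$; the FBSDE discretization sum is $O((\Delta t)^2)$ by Lemma \ref{lemma5}.

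The hard part will be the discrete adjoint term. Lemma \ref{lemma6} does not bound $\Delta t\sum_j\mathbb{E}[|\bar P_{t_j}^{t_j,X_{t_j}^N}-P_{t_j}^N|^2+|\bar Q_{t_j}^{t_j,X_{t_j}^N}-Q_{t_j}^N|^2]$ by a pure $O((\Delta t)^2)$; instead it bounds it by $C(\Delta t)^2$ plus exactly the feedback-mismatch sum we are trying to estimate. Thus the target quantity reappears on the right-hand side with coefficient $C\rho/c_0$, and the estimate is self-referential. I would break this circularity with a second smallness requirement on $\rho$: choosing $\rho$ so that $C\rho/c_0\le 1/2$ lets me absorb the recurring sum into the left-hand side. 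After this absorption only the $O((\Delta t)^2)$ contributions survive, and the claimed bound $\Delta t\sum_{j=i}^{N-1}\mathbb{E}[|\bar\phi_j(X_{t_j}^N)-\phi_j^N(X_{t_j}^N)|^2]\le C(\Delta t)^2$ follows.
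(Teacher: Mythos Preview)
Your proposal is correct and follows essentially the same approach as the paper: the projection fixed-point characterization of both $\bar\phi_i$ and $\phi_i^N$, the three-way splitting of the $H_u$ difference plus the residual $R_H^i$, the use of $(A2)$ with Young's inequality and the smallness condition $2c_0\rho-8C\rho^2\ge c_0\rho/2$ to reach inequality $(\ref{43})$, and finally the self-referential bound from Lemma~\ref{lemma6} closed by the second choice $C\rho/c_0\le 1/2$. This matches the paper's proof step for step.
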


\begin{theorem}
\label{theorem3}Suppose $\left(  A2\right)  $ and the conditions in Lemmas
\ref{lemma6}$-$\ref{lemma5} hold. Then
\[
\left \vert J\left(  \bar{u}\right)  -J\left(  u^{N}\right)  \right \vert \leq
C\Delta t.
\]

\begin{proof}
First, we rewrite the state equations $\left(  \ref{4}\right)  $ and $\left(
\ref{27}\right)  \ $as follows:%
\begin{align*}
&  \bar{X}_{t}=x_{0}+\int_{0}^{t}\bar{b}\left(  s,\bar{X}_{s}\right)
ds+\int_{0}^{t}\bar{\sigma}\left(  s,\bar{X}_{s}\right)  dW_{s},\\
&  X_{t}^{N}=x_{0}+\int_{0}^{t}\tilde{b}\left(  s,X_{s}^{N}\right)
ds+\int_{0}^{t}\tilde{\sigma}\left(  s,X_{s}^{N}\right)  dW_{s},
\end{align*}
where\ for $s\in \left[  t_{i},t_{i+1}\right]  $ $\left(  i=0,\ldots
,N-1\right)  ,$%
\[%
\begin{array}
[c]{ll}%
\bar{b}\left(  s,x\right)  =b\left(  s,x,\bar{\phi}_{i}(\bar{X}_{t_{i}%
})\right)  , & \bar{\sigma}\left(  s,x\right)  =\sigma \left(  s,x,\bar{\phi
}_{i}(\bar{X}_{t_{i}})\right)  ,\\
\tilde{b}\left(  s,x\right)  =b\left(  s,x,\phi_{i}^{N}(X_{t_{i}}^{N})\right)
, & \tilde{\sigma}\left(  s,x\right)  =\sigma \left(  s,x,\phi_{i}^{N}%
(X_{t_{i}}^{N})\right)  .
\end{array}
\]
By Lemmas \ref{lemma2} and \ref{lemma4}, for $0\leq t\leq T$, we have
\begin{align}
&  \; \; \; \; \mathbb{E}\left[  \sup_{0\leq s\leq t}|\bar{X}_{s}-X_{s}%
^{N}|^{2}\right] \\
&  \leq C\int_{0}^{t}\mathbb{E}\left[  \left(  \bar{b}\left(  s,\bar{X}%
_{s}\right)  -\tilde{b}\left(  s,\bar{X}_{s}\right)  \right)  ^{2}+\left(
\bar{\sigma}\left(  s,\bar{X}_{s}\right)  -\tilde{\sigma}\left(  s,\bar{X}%
_{s}\right)  \right)  ^{2}\right]  ds\nonumber \\
&  \leq C\int_{0}^{t}\sum_{i=0}^{\tau_{t}}\mathbb{E}\left[  \left(  \bar{\phi
}_{i}(\bar{X}_{t_{i}})-\phi_{i}^{N}(X_{t_{i}}^{N})\right)  ^{2}\right]
I_{\left[  t_{i},t_{i+1}\right)  }(s)ds\nonumber \\
&  \leq C\sum_{i=0}^{\tau_{t}}\left \{  \mathbb{E}\left[  \left(  \bar{\phi
}_{i}(X_{t_{i}}^{N})-\phi_{i}^{N}(X_{t_{i}}^{N})\right)  ^{2}\right]
+\mathbb{E}\left[  \left(  \bar{X}_{t_{i}}-X_{t_{i}}^{N}\right)  ^{2}\right]
\right \}  \Delta t\nonumber \\
&  \leq C\int_{0}^{t}\mathbb{E}\left[  \sup_{0\leq r\leq s}|\bar{X}_{r}%
-X_{r}^{N}|^{2}\right]  ds+C\left(  \Delta t\right)  ^{2},\nonumber
\end{align}
where $\tau_{t}$ is an integer, satisfying $t_{\tau_{t}}<t\leq t_{\tau_{t}+1}%
$. Then, by Gronwall's inequality, we obtain
\begin{equation}
\mathbb{E}\left[  \sup_{0\leq s\leq t}|\bar{X}_{s}-X_{s}^{N}|^{2}\right]  \leq
C\left(  \Delta t\right)  ^{2}, \label{32}%
\end{equation}
for $0\leq t\leq T$. Notice that
\begin{align}
&  \; \; \; \; \left \vert J\left(  \bar{u}\right)  -J\left(  u^{N}\right)
\right \vert \\
&  \leq \sum_{i=0}^{N-1}\int_{t_{i}}^{t_{i+1}}\mathbb{E}\left[  \left \vert
f\left(  t,\bar{X}_{t},\bar{\phi}_{i}(\bar{X}_{t_{i}})\right)  -f\left(
t,X_{t}^{N},\phi_{i}^{N}(X_{t_{i}}^{N})\right)  \right \vert \right]
dt\nonumber \\
&  \text{ \  \ }+\mathbb{E}\left[  \left \vert h\left(  \bar{X}_{T}\right)
-h(X_{T}^{N})\right \vert \right] \nonumber \\
&  \leq \sum_{i=0}^{N-1}\int_{t_{i}}^{t_{i+1}}\mathbb{E}\left[  \left \vert
f\left(  t,\bar{X}_{t},\bar{\phi}_{i}(\bar{X}_{t_{i}})\right)  -f\left(
t,X_{t}^{N},\bar{\phi}_{i}(X_{t_{i}}^{N})\right)  \right \vert \right]
dt\nonumber \\
&  \text{ \  \ }+\sum_{i=0}^{N-1}\int_{t_{i}}^{t_{i+1}}\mathbb{E}\left[
\left \vert f\left(  t,X_{t}^{N},\bar{\phi}_{i}(X_{t_{i}}^{N})\right)
-f\left(  t,X_{t}^{N},\phi_{i}^{N}(X_{t_{i}}^{N})\right)  \right \vert \right]
dt\nonumber \\
&  \text{ \  \ }+\mathbb{E}\left[  \left \vert h(\bar{X}_{T})-h(X_{T}%
^{N})\right \vert \right]  .\nonumber
\end{align}
Since the continuity of $f$ and $h$, by H\"{o}lder's inequality, we have%
\begin{align}
&  \; \; \; \; \left \vert J\left(  \bar{u}\right)  -J\left(  u^{N}\right)
\right \vert \label{34}\\
&  \leq C\sum_{i=0}^{N-1}\int_{t_{i}}^{t_{i+1}}\left[  \left(  \mathbb{E}%
\left[  |\bar{X}_{t}-X_{t}^{N}|^{2}\right]  \right)  ^{\frac{1}{2}}+\left(
\mathbb{E}\left[  |\bar{X}_{t_{i}}-X_{t_{i}}^{N}|^{2}\right]  \right)
^{\frac{1}{2}}\right]  dt\nonumber \\
&  \text{ \  \ }+C\sum_{i=0}^{N-1}\mathbb{E}\left[  \left \vert \bar{\phi}%
_{i}\left(  X_{t_{i}}^{N}\right)  -\phi_{i}^{N}(X_{t_{i}}^{N})\right \vert
\right]  \Delta t+C\left(  \mathbb{E}\left[  |\bar{X}_{T}-X_{T}^{N}%
|^{2}\right]  \right)  ^{\frac{1}{2}}\nonumber \\
&  \leq C\left(  \mathbb{E}\left[  \sup_{0\leq t\leq T}|\bar{X}_{t}-X_{t}%
^{N}|^{2}\right]  \right)  ^{\frac{1}{2}}+C\sum_{i=0}^{N-1}\mathbb{E}\left[
\left \vert \bar{\phi}_{i}\left(  X_{t_{i}}^{N}\right)  -\phi_{i}^{N}(X_{t_{i}%
}^{N})\right \vert \right]  \Delta t.\nonumber
\end{align}
In addition, by H\"{o}lder's inequality, we have
\begin{equation}
\Delta t\sum_{i=0}^{N-1}\mathbb{E}\left[  \left \vert \bar{\phi}_{i}\left(
X_{t_{i}}^{N}\right)  -\phi_{i}^{N}(X_{t_{i}}^{N})\right \vert \right]
\leq \sqrt{T}\left(  \sum_{i=0}^{N-1}\mathbb{E}\left[  \left \vert \bar{\phi
}_{i}\left(  X_{t_{i}}^{N}\right)  -\phi_{i}^{N}(X_{t_{i}}^{N})\right \vert
^{2}\right]  \Delta t\right)  ^{\frac{1}{2}}. \label{33}%
\end{equation}
Combining $\left(  \ref{32}\right)  ,\left(  \ref{34}\right)  -\left(
\ref{33}\right)  $ and Lemma \ref{lemma4}, we complete the proof.
\end{proof}
\end{theorem}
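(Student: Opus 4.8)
The plan is to reduce the cost gap $|J(\bar u)-J(u^N)|$ to two ingredients that each decay like $\Delta t$: a strong $L^2$ estimate between the state processes $\bar X_\cdot$ and $X^N_\cdot$, and the pointwise feedback error $\bar\phi_i(X_{t_i}^N)-\phi_i^N(X_{t_i}^N)$, which is already controlled by Lemma \ref{lemma4}. First I would rewrite the two state equations $(\ref{4})$ and $(\ref{27})$ as It\^{o} SDEs driven by the same $W$ with piecewise-frozen coefficients $\bar b(s,x)=b(s,x,\bar\phi_i(\bar X_{t_i}))$, $\bar\sigma(s,x)=\sigma(s,x,\bar\phi_i(\bar X_{t_i}))$ and $\tilde b(s,x)=b(s,x,\phi_i^N(X_{t_i}^N))$, $\tilde\sigma(s,x)=\sigma(s,x,\phi_i^N(X_{t_i}^N))$ on each $[t_i,t_{i+1}]$. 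Because the two coefficient families differ only through the frozen control value, assumption $(A1)$ (boundedness of $b_u,\sigma_u$) yields $|\bar b(s,\bar X_s)-\tilde b(s,\bar X_s)|\le C|\bar\phi_i(\bar X_{t_i})-\phi_i^N(X_{t_i}^N)|$, and similarly for $\sigma$.

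Next I would feed this into the standard SDE comparison estimate, Lemma \ref{lemma2}, applied to $\bar X_\cdot$ and $X^N_\cdot$. The resulting right-hand side contains $\int_0^t\mathbb{E}[(\bar b-\tilde b)^2+(\bar\sigma-\tilde\sigma)^2]\,ds$, and the key device is to split the control difference by inserting the intermediate value $\bar\phi_i(X_{t_i}^N)$:
\[
|\bar\phi_i(\bar X_{t_i})-\phi_i^N(X_{t_i}^N)|\le|\bar\phi_i(\bar X_{t_i})-\bar\phi_i(X_{t_i}^N)|+|\bar\phi_i(X_{t_i}^N)-\phi_i^N(X_{t_i}^N)|.
\]
The first term is Lipschitz-controlled by $|\bar X_{t_i}-X_{t_i}^N|$ (using $\bar\phi_i\in C_b^1$) and so feeds a Gronwall loop, while the second is precisely the algorithmic feedback error whose time-weighted $L^2$ sum is $O((\Delta t)^2)$ by Lemma \ref{lemma4}. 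Substituting these and invoking Gronwall's inequality should then give the strong estimate $\mathbb{E}[\sup_{0\le s\le T}|\bar X_s-X^N_s|^2]\le C(\Delta t)^2$.

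Finally I would estimate $|J(\bar u)-J(u^N)|$ directly from the cost functional. Telescoping the running cost and the terminal cost, and inserting the intermediate integrand $f(t,X_t^N,\bar\phi_i(X_{t_i}^N))$, the continuity and boundedness of $f_x,f_u,h_x$ under $(A1)$ together with H\"{o}lder's inequality give
\[
|J(\bar u)-J(u^N)|\le C\bigl(\mathbb{E}[\sup_{0\le t\le T}|\bar X_t-X^N_t|^2]\bigr)^{1/2}+C\Delta t\sum_{i=0}^{N-1}\mathbb{E}\bigl[|\bar\phi_i(X_{t_i}^N)-\phi_i^N(X_{t_i}^N)|\bigr].
\]
The first summand is $O(\Delta t)$ by the state estimate just obtained. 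For the second I would apply H\"{o}lder once more to pass to $\sqrt T\,\bigl(\sum_i\mathbb{E}[|\bar\phi_i(X_{t_i}^N)-\phi_i^N(X_{t_i}^N)|^2]\,\Delta t\bigr)^{1/2}$, which is again $O(\Delta t)$ by Lemma \ref{lemma4}; combining the two bounds completes the proof.

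The hard part will be the bookkeeping in the state estimate rather than any deep new idea: since the feedback argument of $\bar\phi_i$ is $\bar X_{t_i}$ whereas that of $\phi_i^N$ is $X_{t_i}^N$, one must cleanly separate the genuine algorithmic error (bounded by Lemma \ref{lemma4}) from the state mismatch that is absorbed by Gronwall, while checking that freezing the control on each subinterval contributes nothing worse than $O(\Delta t)$. Once that separation is in place, everything reduces to Lemmas \ref{lemma2} and \ref{lemma4} together with routine H\"{o}lder and Gronwall estimates.
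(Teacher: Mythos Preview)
Your proposal is correct and follows essentially the same route as the paper: rewrite the state equations with piecewise-frozen coefficients, apply the SDE comparison Lemma \ref{lemma2}, split $\bar\phi_i(\bar X_{t_i})-\phi_i^N(X_{t_i}^N)$ through the intermediate $\bar\phi_i(X_{t_i}^N)$, close with Gronwall to get $\mathbb{E}[\sup|\bar X-X^N|^2]\le C(\Delta t)^2$, and then bound the cost difference by the same telescoping plus H\"{o}lder, invoking Lemma \ref{lemma4} for the feedback-error sum. The paper's argument is line-for-line what you outlined.
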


\subsection{Proof of the main results \label{proof of theorem}}

\begin{proof}
[Proof of Theorem \ref{theorem} ]Notice that
\begin{align*}
\left \vert J\left(  u^{\ast}\right)  -J\left(  u^{N}\right)  \right \vert  &
=\left \vert J\left(  u^{\ast}\right)  -J\left(  \bar{u}\right)  +J\left(
\bar{u}\right)  -J\left(  u^{N}\right)  \right \vert \\
&  \leq \left \vert J\left(  u^{\ast}\right)  -J\left(  \bar{u}\right)
\right \vert +\left \vert J\left(  \bar{u}\right)  -J\left(  u^{N}\right)
\right \vert .
\end{align*}
By Theorems \ref{theorem2} and \ref{theorem3}, we complete our proof.
\end{proof}

\section{Numerical experiments}

In this section, some numerical experiments have been presented to illustrate
the high accuracy of our algorithm for solving SOCPs. The first two examples
are deterministic control, and the latter two examples are feedback control.
In our tests, we use Gauss-Hermite quadrature rule to approximate the
conditional mathematical expectation and use cubic spline interpolation to
compute spatial non-grid points. To make sure the first-order convergence of
our method, the Euler method is also adopted to solve the related FBSDEs when
calculating the cost. In the following tables, CR stands for the convergence rate.

\begin{example}
\label{EX1}We first consider the control problem of the Black-Scholes type in
\cite{DSL2013}
\[
\left \{
\begin{array}
[c]{l}%
dX_{t}=u_{t}X_{t}dt+\sigma X_{t}dW_{t},\\
X_{0}=x_{0},
\end{array}
\right.
\]
with the cost functional
\[
J\left(  u\right)  =\frac{1}{2}\int_{0}^{T}\mathbb{E}\left[  \left(
X_{t}-\eta_{t}^{\ast}\right)  ^{2}\right]  dt+\frac{1}{2}\int_{0}^{T}u_{t}%
^{2}dt.\text{\ }%
\]
The function $\eta_{\cdot}^{\ast}$ and the corresponding optimal control
$u_{\cdot}^{\ast}$ can be expressed as%
\begin{equation}%
\begin{array}
[c]{ll}%
\displaystyle \eta_{t}^{\ast}=\frac{e^{\sigma^{2}t}-\left(  T-t\right)  ^{2}%
}{\frac{1}{x_{0}}-Tt+\frac{t^{2}}{2}}+1,\text{ \ } & \displaystyle u_{t}%
^{\ast}=\frac{T-t}{x_{0}-Tt+\frac{t^{2}}{2}}.
\end{array}
\tag{a}%
\end{equation}
We set $x_{0}=1,T=1\ $and $\sigma=0.1$ and the reference optimal cost is
$J\left(  u^{\ast}\right)  =0.514898066090988$. Numerical results by using our
discrete recursive method are listed in Table \ref{Table1}.
\begin{table}[ptbh]
\caption{Errors and convergence rates for Example \ref{EX1} a.}%
\label{Table1}
{\footnotesize
\[%
\begin{tabular}
[c]{|c|c|c|c|c|c|c|}\hline
$N$ & 8 & 16 & 32 & 64 & 128 & CR\\ \hline
\multicolumn{1}{|l|}{$\left \vert J\left(  u^{\ast}\right)  -J\left(
u^{N}\right)  \right \vert $} & \multicolumn{1}{|l|}{1.393E-01} &
\multicolumn{1}{|l|}{1.364E-01} & \multicolumn{1}{|l|}{8.512E-02} &
\multicolumn{1}{|l|}{3.243E-02} & 9.068E-03 & 0.996\\ \hline
\end{tabular}
\]
}\end{table}

Next, we choose a different $\eta_{\cdot}^{\ast}$ and $u_{\cdot}^{\ast},$
which is%
\begin{equation}%
\begin{array}
[c]{ll}%
\displaystyle \eta_{t}^{\ast}=\frac{e^{\sigma^{2}t}-\left(  e^{-T}%
-e^{-t}\right)  ^{2}}{\frac{1}{x_{0}}+1-e^{-t}-te^{-T}}-e^{-t}, &
\displaystyle u_{t}^{\ast}=\frac{e^{-T}-e^{-t}}{\frac{1}{x_{0}}+1-e^{-t}%
-te^{-T}}.
\end{array}
\tag{b}%
\end{equation}
Set $x_{0}=1,T=1\ $and $\sigma=0.1$. The reference optimal cost is $J\left(
u^{\ast}\right)  =0.345819897539892$. Numerical results in Table \ref{Table2}
demonstrate that our method is stable and admits a first order rate of
convergence. \begin{table}[ptbh]
\caption{Errors and convergence rates for Example \ref{EX1} b.}%
\label{Table2}
{\footnotesize
\[%
\begin{tabular}
[c]{|c|c|c|c|c|c|c|}\hline
$N$ & 8 & 16 & 32 & 64 & 128 & CR\\ \hline
\multicolumn{1}{|l|}{$\left \vert J\left(  u^{\ast}\right)  -J\left(
u^{N}\right)  \right \vert $} & \multicolumn{1}{|l|}{5.931E-02} &
\multicolumn{1}{|l|}{2.826E-02} & \multicolumn{1}{|l|}{1.369E-02} &
\multicolumn{1}{|l|}{6.554E-03} & 3.056E-03 & 1.067\\ \hline
\end{tabular}
\]
}\end{table}
\end{example}

\begin{example}
\label{EX2}
%\begin{figure}[ptb]
%\centering
%%Requires \usepackage{graphicx}
%\includegraphics[width=.8\textwidth]{EXP2.png}\caption{ Numerical results for
%Example 2 with $\sigma=0.0$ and $0.1$. }%
%\label{Figure3}%
%\end{figure}
The second example is the inventory control problem in \cite{DSL2013}. The
inventory level satisfies the following equation
\[
\left \{
\begin{array}
[c]{l}%
dX_{t}=\left(  u_{t}-r_{t}\right)  dt+\sigma dW_{t},\\
X_{0}=x_{0},
\end{array}
\right.
\]
with the total cost%
\[
J\left(  u\right)  =\frac{1}{2}\int_{0}^{T}\mathbb{E}\left[  \left(
X_{t}-\eta_{t}\right)  ^{2}\right]  dt+\frac{1}{2}\int_{0}^{T}u_{t}%
^{2}dt.\text{\ }%
\]
The demand rates $r_{t}=\left(  T-t\right)  /2\ $and $\eta_{t}=0.5Tt-0.25t^{2}%
+1$. Then the optimal production $u_{\cdot}^{\ast}$ and the optimal cost can
be expressed as
\[%
\begin{array}
[c]{rr}%
\displaystyle u_{t}^{\ast}=T-t,\text{ } & \displaystyle J\left(  u^{\ast
}\right)  =\frac{1}{6}T^{3}+\frac{\sigma^{2}-2}{4}T^{2}+T.
\end{array}
\]
We set $x_{0}=0$ and $T=1$. Table \ref{Table3} shows the numerical results of
the Example \ref{EX2} with $\sigma=0.0,$ $0.1$ and $0.3$, respectively. It
clearly shows that the cost obtained by our numerical method admits a first
order rate of convergence. \begin{table}[ptbh]
\caption{Errors and convergence rates for Example \ref{EX2}.}%
\label{Table3}
{\footnotesize
\[%
\begin{tabular}
[c]{||c|c|c|c|c|c|c||}\hline
$N$ & 8 & 16 & 32 & 64 & 128 & \\ \hline
& \multicolumn{5}{|c|}{$\left \vert J\left(  u^{\ast}\right)  -J\left(
u^{N}\right)  \right \vert $} & CR\\ \hline
\multicolumn{1}{||l|}{$\sigma=0.0$} & \multicolumn{1}{|l|}{5.654E-02} &
\multicolumn{1}{|l|}{2.746E-02} & \multicolumn{1}{|l|}{1.380E-02} &
\multicolumn{1}{|l|}{6.870E-03} & \multicolumn{1}{|l|}{3.532E-03} &
1.000\\ \hline
$\sigma=0.1$ & 7.888E-02 & 4.408E-02 & 2.286E-02 & 1.108E-02 & 4.908E-03 &
1.000\\ \hline
$\sigma=0.3$ & 1.321E-01 & 8.608E-02 & 5.204E-02 & 2.548E-02 & 7.563E-03 &
1.001\\ \hline
\end{tabular}
\]
}\end{table}
\end{example}

\begin{example}
\label{EX3}The third example is a LQ problem in \cite{YZh1999}
\[
\left \{
\begin{array}
[c]{l}%
dX_{t}=u_{t}dt+\delta u_{t}dW_{t},\\
X_{0}=x_{0},
\end{array}
\right.
\]
with the cost functional%
\[
J\left(  u\right)  =\frac{1}{2}\int_{0}^{T}\mathbb{E}\left[  X_{t}^{2}\right]
dt.\text{\ }%
\]
The optimal control and the corresponding optimal cost are given by
\[%
\begin{array}
[c]{rr}%
u_{t}^{\ast}=-\frac{X_{t}}{\delta^{2}},\text{ \ } & J\left(  u^{\ast}\right)
=\frac{1}{2}\delta^{2}\left(  1-e^{\frac{-T}{\delta^{2}}}\right)  .
\end{array}
\]
Set $x_{0}=1,T=1\ $and $\delta=2$. Numerical results are listed in Table
\ref{Table5}. It is clearly shown that our method is stable and admits a first
order rate of convergence. \begin{table}[ptbh]
\caption{Errors and convergence rates for Example \ref{EX3}.}%
\label{Table5}%
{\footnotesize
\[%
\begin{tabular}
[c]{|c|c|c|c|c|c|c|}\hline
$N$ & 8 & 16 & 32 & 64 & 128 & CR\\ \hline
\multicolumn{1}{|l|}{$\left \vert J\left(  u^{\ast}\right)  -J\left(
u^{N}\right)  \right \vert $} & 9.611E-03 & 4.653E-03 &
\multicolumn{1}{|l|}{2.338E-03} & \multicolumn{1}{|l|}{1.193E-03} &
\multicolumn{1}{|l|}{6.114E-04} & 0.991\\ \hline
\end{tabular}
\]
}\end{table}
\end{example}

\begin{example}
\label{EX4}In last example we consider a portfolio problem
\begin{equation}
\left \{
\begin{array}
[c]{l}%
\displaystyle d\tilde{X}_{t}=\left(  \alpha \tilde{u}_{t}+\gamma \right)
\tilde{X}_{t}dt+\beta \tilde{u}_{t}\tilde{X}_{t}d\widetilde{W}_{t},\text{ }%
t\in(0,1],\\
\displaystyle \tilde{X}_{0}=\tilde{x}_{0},
\end{array}
\right.  \label{exp3.1}%
\end{equation}
with the cost functional%
\[
\tilde{J}\left(  u^{\ast}\right)  =\min_{u\in K}\frac{1}{2}\mathbb{E}\left[
(\tilde{X}_{1}-\kappa)^{2}\right]  ,\label{exp3.2}%
\]
and%
\[
K=\left \{  \tilde{u}_{\cdot}\in \mathcal{U}[0,1]:-1\leq \tilde{u}_{t}%
\leq1,\text{ }a.e.\text{ }a.s.\right \}  .
\]
Set $\tilde{x}_{0}=6,\kappa=20,\alpha=0.25$, $\gamma=1$ and $\beta=\sqrt{2}%
/2$. The reference optimal cost with a fine mesh is $\tilde{J}\left(  u^{\ast
}\right)  =6.00909101172000$. Since the control set $U=\left[  -1,1\right]  $,
we need to project $\tilde{u}_{t}$ into $\left[  -1,1\right]  $ by $\tilde
{u}_{t}=\max \left(  \min \left(  \tilde{u}_{t},1\right)  ,-1\right)  .$
%make further judgments when $u_{t}\notin \left[  -1,1\right]  $. If
%$H_{u}\left(  u_{t}=-1\right)  \geq0$, then $u_{t}=-1$; and else if
%$H_{u}\left(  u_{t}=1\right)  \leq0$, then $u_{t}=1$.
We remark that the Bisection method can also be used to solve this example.
Numerical results are listed in Table \ref{Table4}. It is clearly shown that
our method admits a first order rate of convergence.
%\begin{table}[ptbh]
%\caption{Numerical results of $J$ for Example \ref{EX3}.}%
%%\label{Table4}
%{\footnotesize
%\[%
%\begin{tabular}
%[c]{|c|c|c|c|c|c|c|}\hline
%$N$ & 8 & 16 & 32 & 64 & 128 & CR\\ \hline
%\multicolumn{1}{|l|}{$|\tilde{J}\left(  u^{\ast}\right)  -\tilde{J}\left(
%u^{N}\right)  |$} & 3.381E+00 & 1.655E+00 & \multicolumn{1}{|l|}{8.365E-01} &
%\multicolumn{1}{|l|}{4.171E-01} & \multicolumn{1}{|l|}{2.082E-01} &
%1.003\\ \hline
%\end{tabular}
%\  \
%\]
%}\end{table}
\begin{table}[ptbh]
\caption{Errors and convergence rates for Example \ref{EX4}.}%
\label{Table4}%
{\footnotesize
\[%
\begin{tabular}
[c]{|c|c|c|c|c|c|c|}\hline
$N$ & 8 & 16 & 32 & 64 & 128 & CR\\ \hline
\multicolumn{1}{|l|}{$|\tilde{J}\left(  u^{\ast}\right)  -\tilde{J}\left(
u^{N}\right)  |$} & 3.592E+00 & 1.797E+00 & \multicolumn{1}{|l|}{9.761E-01} &
\multicolumn{1}{|l|}{4.622E-01} & \multicolumn{1}{|l|}{2.205E-01} &
1.001\\ \hline
\end{tabular}
\]
}\end{table}

\begin{remark}
The control problem above is obtained from Example 4 in \cite{GLTZhZh2017}
through the following transformation
\[
\tilde{X}_{t}=\frac{1}{T}X_{Tt},\text{ \  \ }\tilde{u}_{t}=u_{Tt},\text{
\ }\tilde{J}\left(  \tilde{u}\right)  =\frac{1}{T^{2}}J\left(  u\right)  ,
\]
and the process $\widetilde{W}_{t}=\frac{1}{\sqrt{T}}W_{Tt}$ with the $\sigma
$-field $\mathcal{F}_{t}^{\widetilde{W}}=\mathcal{F}_{Tt}^{W}.$
\end{remark}
\end{example}

\section{Conclusion}

In this work, we reduce the optimal control problem to the discrete case and
derive a discrete SMP. By means of this discrete SMP, we propose an effective
discrete recursive method for solving SOCPs. The Euler scheme is used to
approximate the discrete Hamilton system that is given by the discrete SMP
condition and the state and adjoint equations. We conducted a rigorous error
analysis and prove that our method admits a first order rate of convergence.
Several numerical examples powerful support the theoretical results.

\end{document}